\newif\ifPDF
\newtheorem*{main*}{Main Theorem}
\newtheorem{theorem}{Theorem}[section]
\newtheorem*{theorem*}{Theorem}
\newtheorem{lemma}[theorem]{Lemma}
\newtheorem{corollary}[theorem]{Corollary}
\newtheorem*{question*}{Question}
\newtheorem*{conjecture*}{Conjecture}
\theoremstyle{definition}
\newtheorem*{definition*}{Definition}
\theoremstyle{remark}
\newtheorem{remark}[theorem]{Remark}
\newtheorem{acknowledgement}[theorem]{Acknowledgement}
\numberwithin{equation}{section}
\newcommand{\E}{\mathbb{E}}
\newcommand{\R}{\mathbb{R}}
\newcommand{\Z}{\mathbb{Z}}
 \DeclareMathOperator{\Sp}{\Sp}
\newcommand{\union}{\cup}
\newcommand{\intersect}{\cap}
\newcommand{\pa}{\partial }
\providecommand{\to}{\longrightarrow }
\def\[#1\]{\begin{align*}\begin{split} #1 \end{split}\end{align*} }
\def\[[#1\]]{\begin{align}\begin{split} #1 \end{split}\end{align} }
\renewcommand{\bar}{\overline}
\newcommand{\paT}{\pa_{\mathrm{T}}}
\newcommand{\bT}{\mathrm{B}_{\mathrm{T}}}
\newcommand{\dT}{\mathrm{d}_{\mathrm{T}}}
\newcommand{\Fix}{\mathrm{Fix}}
\newcommand{\Min}{\mathrm{Min}}
\newcommand{\Isom}{\mathrm{Isom}}
\newcommand{\Homeo}{\mathrm{Homeo}}
\begin{document}
\author[Khek Lun Harold Chao]{Khek Lun Harold Chao}
\title[{CAT}(0) spaces with boundary the join of two Cantor sets]{{CAT}(0) spaces with boundary the join of two Cantor sets}
\email{khchao@indiana.edu}
\address{Department of Mathematics, Indiana University, Bloomington, IN 47405, USA}
\begin{abstract}
We will show that if a proper complete CAT(0) space $X$ has a visual
boundary homeomorphic to the join of two Cantor sets, and $X$ admits
a geometric group action by a group containing a subgroup isomorphic to $\Z^2$, then its Tits boundary is
the spherical join of two uncountable discrete sets. If $X$ is
geodesically complete, then $X$ is a product, and the group has a finite index subgroup isomorphic to a lattice in the product of two isometry groups of bounded valence bushy trees.
\end{abstract}
\maketitle

\section{Introduction}
CAT(0) spaces with homeomorphic visual boundaries can have very
different Tits boundaries. However, if $X$ admits a proper and
cocompact group action by isometries, or a geometric group action in short, then
this places a restriction on the possible Tits boundaries for a
given visual boundary. (We follow the definition of a proper group
action in Chapter I.8 of \cite{BH}; some use the term ``properly
discontinuous'' for this.) Kim Ruane has showed in \cite{Ru06} that
for a CAT(0) space $X$ with boundary $\pa X$ homeomorphic to the
suspension of a Cantor set, if it admits a geometric
group action, then the Tits boundary $\paT X$ is isometric to the
suspension of an uncountable discrete set. In this paper we will
show the following.
\begin{theorem}\label{mainthm1}
If a CAT(0) space $X$ has a boundary $\pa X$ homeomorphic to the
join of two Cantor sets, $C_1$ and $C_2$, and if $X$ admits a geometric group action by a group containing a subgroup isomorphic to $\Z^2$, then its Tits boundary $\paT X$ is
isometric to the spherical join of two uncountable discrete sets. So
if $X$ is geodesically complete, then $X = X_1 \times X_2$ with $\pa
X_i$ homeomorphic to $C_i$, $i=1,2$.
\end{theorem}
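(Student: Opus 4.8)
The plan is to produce a single flat from the $\Z^2$-subgroup, read its boundary circle inside the join $C_1 * C_2$, and then leverage the total disconnectedness of the two Cantor factors together with the cocompact dynamics to force every Tits distance to equal $\pi/2$ or $\pi$. The guiding model is $X = T_1 \times T_2$, a product of two bushy trees, in which the seed flat is a product of two axes and the spherical-join structure is transparent; the work is to recover this rigidity without assuming the product. Concretely, I would first apply the Flat Torus Theorem (\cite{BH}, II.7) to $\Z^2 \le G$ to obtain a flat plane $E \cong \E^2 \subset X$ on which $\Z^2$ acts cocompactly by translations. Its Tits boundary $\paT E$ is a round circle of circumference $2\pi$ embedded in $\paT X$ (isometrically for distances up to $\pi$), and as a subset of $\pa X$ it is an embedded circle in $C_1 * C_2$. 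Since $G$ acts on $\pa X$ by homeomorphisms preserving the local join structure, it preserves the union $C_1 \cup C_2$ of end sets; after passing to a subgroup of index at most two I may assume each $C_i$ is preserved.

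Next comes the topological heart. Writing $\tau \colon C_1 * C_2 \to [0,1]$ for the join parameter with $\tau^{-1}(0) = C_1$ and $\tau^{-1}(1) = C_2$, any arc between two points with distinct $C_1$-coordinates must attain $\tau = 1$: on the open stratum $\tau \in (0,1)$ the $C_1$-coordinate is a continuous map of a connected set into the totally disconnected $C_1$, hence locally constant. It follows that $\paT E$ meets $C_1$ and $C_2$ in alternating \emph{corners}, that each sub-arc between consecutive corners is a single join fibre (again by local constancy of the transverse coordinates), and that there are at least two corners in each factor, since a single corner per factor would force two arcs to coincide with one fibre. Labelling them cyclically $a, b, a', b'$ with $a, a' \in C_1$ and $b, b' \in C_2$, the four arcs are genuine fibres whose lengths sum to $2\pi$.

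The distance lemma is where I expect the main obstacle. The cyclic relations on the circle give $\dT(a,b) + \dT(a,b') = \pi$ and, once the same-factor corners are shown antipodal, $\dT(a,a') = \dT(b,b') = \pi$; but a single flat only yields cross distances $\alpha$ and $\pi - \alpha$ and does \emph{not} by itself force $\alpha = \pi/2$. The crux is therefore the uniform lower bound $\dT(c_1,c_2) \ge \pi/2$ for all $c_1 \in C_1$, $c_2 \in C_2$ — equivalently $\dT \ge \pi$ for distinct same-factor points, since a same-factor Tits geodesic is forced through the opposite factor by the argument above, giving $\dT(\xi,\eta) = \dT(\xi,\zeta) + \dT(\zeta,\eta)$. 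Granting this bound, the co-flat sum relation collapses $\alpha$ to exactly $\pi/2$, and more than two corners per factor are excluded, because three points of one $C_i$ on a circle of length $2\pi$ would lie within arc-distance $< \pi$, contradicting $\dT \ge \pi$. Establishing $\dT(c_1,c_2) \ge \pi/2$ in general is where the CAT(1) comparison geometry of $\paT X$, the join rigidity, and the cocompact dynamics must be combined: using conjugates and translates of the $\Z^2$-flat together with a limiting argument for sequences of flats, one shows every pair $(c_1,c_2)$ is realized as adjacent corners of some flat circle, so the bound propagates from the seed flat over the whole boundary.

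With the distance lemma in hand, every point of $\pa X$ lies on a flat circle meeting each factor in one corner, so each point of $\paT X$ acquires join coordinates $(c_1, c_2, \theta)$ with $\theta \in [0, \pi/2]$, and the computed distances (namely $\pi$ within a factor and $\pi/2$ across) are exactly those of the spherical join of $C_1$ and $C_2$ regarded as discrete sets — discrete because distinct points of each factor are at Tits distance $\pi$, and uncountable because the $C_i$ are Cantor sets. Hence $\paT X$ is isometric to the spherical join of two uncountable discrete sets. Finally, if $X$ is geodesically complete I would invoke the splitting theorem that a geodesically complete CAT(0) space admitting a geometric action, whose Tits boundary is a nontrivial spherical join, is a metric product: the decomposition $\paT X = C_1^{\mathrm{disc}} * C_2^{\mathrm{disc}}$ induces $X = X_1 \times X_2$ with $\paT X_i = C_i^{\mathrm{disc}}$, whence $\pa X_i$ is homeomorphic to $C_i$, completing the proof.
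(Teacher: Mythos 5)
Your setup is sound, and you have correctly isolated the crux: everything reduces to the uniform bound $\dT(c_1,c_2)\ge \pi/2$ for all $c_1\in C_1$, $c_2\in C_2$ (your reduction of the within-factor bound to the cross bound, and your observation that the flat circle embeds isometrically up to distance $\pi$, are both fine). But that crux is exactly the step your proposal does not prove. The proposed mechanism --- ``using conjugates and translates of the $\Z^2$-flat together with a limiting argument for sequences of flats, one shows every pair $(c_1,c_2)$ is realized as adjacent corners of some flat circle'' --- is an unsubstantiated rigidity claim of essentially the same depth as the theorem itself: nothing in a geometric action lets you steer limits of translated flats so that the limiting boundary circle passes through two prescribed boundary points, and a priori a given pair need not lie on any flat circle at all. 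Worse, even granting the claim, it does not close the argument: adjacency on \emph{some} flat circle only gives $\dT(c_1,c_2)=\alpha$ for that particular flat's angle $\alpha\in(0,\pi)$, and the relations imposed by a single flat ($\alpha$ and $\pi-\alpha$ on complementary arcs) do not connect the angles of \emph{different} flats, so the value $\pi/2$ does not ``propagate'' from the seed flat; you would need a further global input forcing all these angles to agree.

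The paper supplies that input from a direction entirely absent from your outline. The key imported result is the Papasoglu--Swenson theorem (\cite{PaSw09}, Theorem 22, quoted as Theorem \ref{PS}): since $G$ has no rank-one element (Lemma \ref{no rank 1}) and no finite orbit on $\pa X$ (Lemma \ref{no finite orbit}), every closed $G$-invariant subset --- in particular each $C_i$ --- is $\pi/2$-dense in $\paT X$. This yields all the \emph{upper} bounds: it pins down where endpoints of hyperbolic elements can sit (Section 3), and it gives $\dT(x,y)\le \pi/2$ at the end of Lemma \ref{pi/2 apart}, since a point $p$ on the arc near $x$ must have a Tits geodesic to $C_2$ avoiding $x$, hence passing through $y$. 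The \emph{lower} bound for an arbitrary pair comes from dynamics rather than from a flat through $x$ and $y$: Ruane's lemma (Lemma \ref{accum pt half flat}) together with Lemma \ref{conv of forward seq} forces $\bar g^n\cdot x$ and $\bar g^n\cdot y$ to accumulate only at the four corners of the single seed flat $\pa F=\pa\Min(g)$; one first shows all four arcs of $\pa F$ have length $\pi/2$ (using elements of the centralizer $Z_g$ with endpoints dense on $\pa F$, plus Lemma \ref{inf orbit}), and then lower semicontinuity of $\dT$ under cone-topology convergence transports the corner distance $\pi/2$ back to $(x,y)$. So the proof needs exactly one flat, not a flat through every pair; what replaces your family of flats is the combination of the $\pi/2$-density theorem, orbit dynamics, and semicontinuity of the Tits metric. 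Without some substitute for this machinery, your outline has a genuine gap at its central step.
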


As for the group acting on $X$, we will prove the following.
\begin{theorem}\label{mainthm2}
Let $X$ be a geodesically complete CAT(0) space such that $\pa X$ is
homeomorphic to the join of two Cantor sets. Then for a group $G < \Isom(X)$
acting geometrically on $X$ and containing a subgroup isomorphic to $\Z^2$, either $G$ or a subgroup of
$G$ of index 2 is a uniform lattice in $\Isom(X_1) \times
\Isom(X_2)$. Furthermore, a finite index subgroup of $G$ is a
lattice in $\Isom(T_1)\times \Isom(T_2)$, where $T_i$ is a bounded valence bushy tree quasi-isometric to $X_i$, $i=1,2$.
\end{theorem}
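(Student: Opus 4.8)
The plan is to derive everything from Theorem~\ref{mainthm1}. As $X$ is geodesically complete, that theorem already supplies the isometric splitting $X = X_1 \times X_2$ with each $\pa X_i$ a Cantor set and each $\paT X_i$ an uncountable discrete set, i.e.\ a set of points pairwise at Tits distance $\pi$. The first job is to understand $\Isom(X)$. Because $\paT X_i$ is discrete, it contains no round circle and no nonconstant Tits geodesic, so neither factor splits further and neither is a Euclidean space; thus $X = X_1 \times X_2$ is the canonical decomposition into two indecomposable, non-Euclidean factors. By the uniqueness of this decomposition (Foertsch--Lytchak), every isometry of $X$ permutes the two factors, so $\Isom(X_1) \times \Isom(X_2)$ is a subgroup of $\Isom(X)$ of index at most $2$, the index $2$ occurring exactly when some isometry interchanges two isometric factors. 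I then set $G^* = G \cap \bigl(\Isom(X_1) \times \Isom(X_2)\bigr)$, which has index at most $2$ in $G$.

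For the first assertion I use that $X$ is proper, so $\Isom(X)$ is locally compact and acts properly on $X$. The geometric action of $G$ is then proper and cocompact, which makes $G$ discrete in $\Isom(X)$ and $\Isom(X)/G$ compact; that is, $G$ is a uniform lattice in $\Isom(X)$. Intersecting with the finite-index subgroup $\Isom(X_1)\times\Isom(X_2)$ shows $G^*$ is a uniform lattice in $\Isom(X_1)\times\Isom(X_2)$, which is precisely the statement that $G$, or a subgroup of index $2$, is a uniform lattice in $\Isom(X_1)\times\Isom(X_2)$.

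Next I treat the factors individually. Since $\paT X_i$ is discrete it contains no isometrically embedded circle of length $2\pi$, so $X_i$ has no flat plane, and the flat plane theorem makes the proper cocompact CAT(0) space $X_i$ Gromov hyperbolic. Its visual boundary $\pa X_i \cong C_i$ is a Cantor set, so $X_i$ is quasi-isometric to a tree; cocompactness forces bounded valence and the perfectness of $C_i$ forces it to be bushy, yielding the bounded valence bushy tree $T_i$ with $T_i$ quasi-isometric to $X_i$. Finally, because $\Isom(X_1)\times\Isom(X_2)$ already acts cocompactly on the product, each factor group $H_i := \Isom(X_i)$ acts cocompactly, hence geometrically, on $X_i$.

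It remains to move the lattice onto the trees. The quasi-isometry $X_i \simeq T_i$ turns the geometric action of $H_i$ into a proper, cobounded quasi-action on $T_i$, and by the quasi-isometric rigidity of bounded valence bushy trees (Mosher--Sageev--Whyte) this quasi-action is quasi-conjugate to a genuine geometric action $\rho_i \colon H_i \to \Isom(T_i)$, whose kernel is compact and whose image is closed and cocompact. The product $\rho_1 \times \rho_2 \colon H_1 \times H_2 \to \Isom(T_1)\times\Isom(T_2)$ then has compact kernel and cocompact image, so it carries the uniform lattice $G^*$ to a cocompact subgroup meeting the kernel in a finite set; passing to a finite-index subgroup of $G^*$ that avoids this finite kernel produces a uniform lattice in $\Isom(T_1)\times\Isom(T_2)$, and this subgroup has finite index in $G$. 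I expect the main obstacle to be exactly this last transfer: Mosher--Sageev--Whyte is naturally phrased for quasi-actions of abstract groups, whereas here I need the quasi-conjugation to be compatible with the topologies of the locally compact groups $H_i = \Isom(X_i)$ and $\Isom(T_i)$ --- so that $\rho_i$ is continuous and proper with compact kernel --- and I must verify that the pushforward of the discrete group $G^*$ stays discrete in the product, despite $\Isom(X_i)$ and $\Isom(T_i)$ not being isomorphic and the compact kernels requiring careful bookkeeping.
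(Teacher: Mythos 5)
Your first assertion is correct but reached by a different route than the paper's. You invoke uniqueness of the de Rham decomposition (Foertsch--Lytchak) to conclude that \emph{all} of $\Isom(X)$ permutes the two irreducible non-Euclidean factors; the paper's Theorem \ref{G to product of isom} instead passes to the index-at-most-$2$ subgroup stabilizing $C_1$ and $C_2$ (Lemma \ref{subgroup stabilizing C_1, C_2}) and checks by hand that such isometries preserve the two fibrations: a geodesic line through two points with the same $X_2$-coordinate lies in a fiber $X_1\times\{\mathrm{pt}\}$, hence has both endpoints in $C_1$, and this property is preserved because $C_1$ is. Your version buys a stronger statement (about the full isometry group) at the cost of a heavier citation whose hypotheses you should verify; the paper's argument is elementary and self-contained. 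Either way the uniform lattice conclusion follows as you say.

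The second assertion is where your proposal has genuine gaps, and they sit exactly on the paper's main technical work. First, the sentence ``$\pa X_i$ is a Cantor set, so $X_i$ is quasi-isometric to a tree'' is not a citable implication. The group-theoretic fact (a hyperbolic group with Cantor set boundary is virtually free, hence quasi-isometric to a tree) does not apply here, because no discrete group is known to act geometrically on the factor $X_i$: the projection of $G$ to $\Isom(X_i)$ is cocompact but, as the paper stresses, possibly non-discrete. For bare hyperbolic spaces the implication is false --- gluing hyperbolic disks of unboundedly growing radius to the vertices of a trivalent tree gives a proper hyperbolic space with Cantor set boundary that fails the Bottleneck Property, hence is not quasi-isometric to any tree --- so homogeneity must be used, and the paper uses it through real work: Lemma \ref{large ball separates space} plus Manning's criterion (Theorem \ref{Manning}, \cite{Mann}) give the quasi-isometry to a tree, and bounded valence and bushiness are then extracted from Manning's explicit construction using both cocompactness \emph{and} geodesic completeness; your clause ``cocompactness forces bounded valence and perfectness forces bushy'' compresses that entire argument into an assertion. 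Second, your transfer of the lattice to $\Isom(T_1)\times\Isom(T_2)$ is unresolved, as you yourself flag. Properness of a quasi-action in the Mosher--Sageev--Whyte framework is a finiteness condition (uniformly finitely many group elements move one ball to meet another), and it fails outright for the non-discrete locally compact group $H_i=\Isom(X_i)$; so MSW does not hand you a continuous $\rho_i$ with compact kernel and closed cocompact image, nor discreteness of the image of $G^*$. The paper sidesteps the topological-group issue entirely: the discrete group $G$ is itself quasi-isometric to $T_1\times T_2$, so Ahlin's quasi-isometric rigidity theorem (\cite{Ahlin}, Theorem 1) applies directly to $G$ and yields a finite-index subgroup that is a lattice in $\Isom(T_1\times T_2)$, and $\Isom(T_1)\times\Isom(T_2)$ has index at most $2$ there. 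If you want to keep a factor-by-factor argument, you should apply MSW to the cobounded quasi-actions of the \emph{discrete} group $G^*$ on each $T_i$ rather than to $H_i$; but handling the resulting finite kernel and reassembling a proper cocompact action on a product is, in essence, exactly what Ahlin's theorem packages for you.
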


\begin{remark}
The assumption that $G$ contains a subgroup isomorphic to $\Z^2$ is only used to obtain a hyperbolic element in $G$ with endpoints in  $\pa X \setminus (C_1 \union C_2)$, which we use in Section 4 to prove Theorem \ref{mainthm1}. It is conjectured that a CAT(0) group is either Gromov hyperbolic or it contains a subgroup isomorphic to $\Z^2$.  Without using the assumption on $G$, we can show that $G$ cannot be hyperbolic, which follows from Lemma \ref{no rank 1} and the Flat Plane Theorem. (\cite{BH}, Theorem III.H.1.5) Thus if the conjecture is shown to be true for general CAT(0) groups, the assumption on $G$ will not be necessary. The conjecture has been proved for some classes of CAT(0) groups, see \cite{KapKlein} and \cite{CaprHagl09} for examples.

If $X_i$ are proper geodesically complete, one might hope that they
are trees, so $G$ will be a uniform lattice in the product of two
isometry groups of trees. Surprisingly, this may not be the case.
Ontaneda constructed a 2-complex $Z$ which is non-positively curved
and geodesically complete with free group $F_n$ as its fundamental
group. (See proof of proposition 1 in \cite{Ont}) Its universal
cover is quasi-isometric to $F_n$, so it is a Gromov hyperbolic
space with Cantor set boundary, while being also a CAT(0) space. Under an additional condition that the isotropy subgroup of
$\Isom(X_i)$ of every boundary point of $X_i$ acts cocompactly on
$X_i$, then $X_i$ is a tree. (See proof of Theorem 1.3 in
\cite{CM1}.)

There are irreducible
lattice in a product of two trees, so $G$ may not have a finite index subgroup which splits as a product. See \cite{BM} for a detailed investigation.
\end{remark}

\begin{acknowledgement}
I would like to thank my advisor Chris Connell for suggesting this
problem to me and providing me with a lot of valuable discussions,
assistance and encouragements while I was on this project.
\end{acknowledgement}

\section{Preliminaries}
First we fix the notations. For a CAT(0) space $X$, its (visual)
boundary with the cone topology is $\pa X$. For a subset $H\subset X$, we denote by $\pa H := \bar H \cap \pa X$, where the closure $\bar H$ is taken in $\bar X := X \union \pa X$. The angular and the Tits
metrics on the boundary are denoted as $\angle(\cdot,\cdot)$ and
$\dT(\cdot,\cdot)$ respectively. We denote the boundary with the
Tits metric by $\paT X$. If $g$ is a group element acting on $X$ by
isometry, we denote by $\bar g$ the action of $g$ extended to $\pa
X$ by homeomorphism. If $g$ acts on $X$ by a hyperbolic isometry, the two endpoints of its axes on $\pa X$ are denoted by  $g^{\pm\infty}$. We refer to \cite{BH} for details on basic facts about CAT(0) spaces.

Let $X$ be a complete CAT(0) space with $\pa X$ homeomorphic to the
join of two Cantor sets $C_1$ and $C_2$, and $G < \Isom(X)$ be a group acting
on $X$ geometrically. We will not assume that $G$ contains a subgroup isomorphic to $\Z^2$ until Section 4. By the following lemma, we can
assume that $G$ stabilizes $C_1$ and $C_2$.

\begin{lemma}\label{subgroup stabilizing C_1, C_2}
Either $G$ or a subgroup of $G$ of index 2 stabilizes each of $C_1$ and
$C_2$.
\end{lemma}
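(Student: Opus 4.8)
The plan is to reduce the statement to a purely topological fact about the visual boundary, namely that every self-homeomorphism of $\pa X \cong C_1 * C_2$ carries the unordered pair $\{C_1,C_2\}$ to itself. Granting this, the geometric action of $G$ gives, for each $g$, a homeomorphism $\bar g$ of $\pa X$, and one defines $\phi\colon G \to \Z/2\Z$ by letting $\phi(g)=0$ if $\bar g$ maps each of $C_1,C_2$ to itself and $\phi(g)=1$ if $\bar g$ interchanges them. Since composing two interchanges gives a map preserving each factor, $\phi$ is a homomorphism, and $\ker\phi$ is exactly the subgroup stabilizing each of $C_1$ and $C_2$; its index is $1$ or $2$, which is the assertion.

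To speak of the pair $\{C_1,C_2\}$ topologically, I would first characterize the subset $C_1\union C_2$ intrinsically, the key point being that $C_1 * C_2$ is not locally connected. Writing a point of the join as $[c_1,c_2,t]$ with $t\in[0,1]$, an interior point ($0<t<1$) has a neighborhood basis of sets homeomorphic to a product of a Cantor set with an interval; the connected component of the point in such a neighborhood is an open arc, so deleting the point splits that local component into exactly two pieces. A point $c\in C_1$, by contrast, is the apex of a cone: every neighborhood of $c$ contains, for each $c'\in C_2$, an initial segment of the arc from $c$ to $c'$, and these segments meet only at $c$, so the connected component of $c$ in a small neighborhood is a truncated cone on $C_2$ and deleting $c$ breaks it into uncountably many pieces. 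The same holds at points of $C_2$. Thus $C_1\union C_2$ is precisely the set of points $p$ admitting arbitrarily small neighborhoods $U$ for which the connected component of $p$ in $U$, with $p$ removed, has more than two components (equivalently, the points having no neighborhood homeomorphic to a Cantor set times an interval). This description is manifestly invariant under any homeomorphism of $C_1 * C_2$.

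With $C_1\union C_2$ identified, I would recover the bipartition from the complement. The set $(C_1 * C_2)\setminus(C_1\union C_2)$ equals $C_1\times C_2\times(0,1)$, whose connected components are the open arcs $\{(c_1,c_2)\}\times(0,1)$, each accumulating onto the single point $c_1\in C_1$ at one end and onto the single point $c_2\in C_2$ at the other. Calling two points of $C_1\union C_2$ adjacent when the closure of some such component contains both of them, one obtains exactly the complete bipartite graph with parts $C_1$ and $C_2$; since no component closure contains two points of the same $C_i$, "lying in the same $C_i$" coincides with the relation "equal or non-adjacent," which is an equivalence relation with the two classes $C_1$ and $C_2$. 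Because adjacency is phrased solely in terms of components of the complement of $C_1\union C_2$ and their limit points, any homeomorphism permutes these two classes, proving the reduction in the first paragraph and hence the lemma.

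The main obstacle is the local analysis of the second paragraph. Because $C_1 * C_2$ fails to be locally connected, one cannot argue with neighborhoods directly but must pass to connected components (equivalently path components) of small neighborhoods, and then verify carefully that the dichotomy "two versus uncountably many pieces after deletion" is a bona fide local homeomorphism invariant which cleanly separates interior points from points of $C_1\union C_2$; everything after this point is formal.
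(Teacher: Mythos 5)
Your proposal is correct and follows essentially the same route as the paper: view $\pa X$ as a complete bipartite graph with parts $C_1$ and $C_2$, observe that every boundary homeomorphism either preserves both parts or swaps them, and take the kernel of the resulting homomorphism to the symmetric group on two elements. The only difference is one of rigor, not of method: the paper simply asserts the preserve-or-swap dichotomy as evident from the bipartite structure, whereas you supply the supporting topological argument (the local component analysis separating the cone points $C_1 \union C_2$ from arc-interior points, and the recovery of the bipartition from adjacency of complementary components), filling in a step the paper leaves implicit.
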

\begin{proof}
Consider $\pa X$ as a complete bipartite graph with $C_1, C_2$ as
the two sets of vertices. For any $g\in G$, if $\bar g
\cdot x_1 \in C_1$ for some $x_1\in C_1$, then $\bar g \cdot C_i =
C_i$, $i=1,2$; otherwise $\bar g \cdot C_1 = C_2$ and $\bar
g \cdot C_2 = C_1$. So the homomorphism from $G$ to
symmetric group on two elements is well-defined and its kernel is
the subgroup of $G$ which stabilizes each of $C_1$ and $C_2$.
\end{proof}

By an arc we specifically mean a segment from a point in $C_1$ to a
point in $C_2$ which does not pass through any other point of $C_1$
or $C_2$, and by open (closed) segment a segment on the boundary
excluding (including) its two endpoints. We will investigate the
positions of the endpoints of hyperbolic elements in $G$.

We quote a basic result on dynamics on CAT(0) space boundary by Ruane:

\begin{lemma}[Ruane, \cite{Ru01} Lemma 4.1]\label{accum pt half flat}
Let $g$ be a hyperbolic isometry of a CAT(0) space $X$ and let $c$
be an axis of $g$. Let $z \in\pa X$, $z \neq g^{-\infty}$ and let
$z_i = \bar g^i \cdot z$. If $w \in\pa X $ is an accumulation point of
the sequence $(z_i)$ in the cone topology, then
$\angle(g^{-\infty},w)+\angle(w,g^{\infty})=\pi$, and
$\angle(g^{-\infty},z)=\angle(g^{-\infty},w)$. If $w\neq g^\infty$,
then $\dT(g^{-\infty}, w)+\dT(w,g^{\infty} )=\pi$.  In this case $c$
and a ray from $c(0)$ to $w$ span a flat half plane, and
$\dT(g^{-\infty},z)=\dT(g^{-\infty},w)$.
\end{lemma}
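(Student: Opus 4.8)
The plan is to exploit the equivariance of the whole picture under $\bar g$ together with CAT(0) comparison, reducing the angular assertions to the construction of a single flat half-plane. Parametrize the axis $c$ so that $c(0)=p$ and $\bar g$ translates $c$ by its translation length $\ell>0$; then $g^i\cdot p=c(i\ell)$ and $\bar g$ fixes $g^{\pm\infty}$. Writing $\angle_x(\cdot,\cdot)$ for the Alexandrov angle at a point $x$, the first, purely bookkeeping, step is the identity: since $g^i$ is an isometry fixing $g^{\pm\infty}$, for every $i$ one has $\angle_{c(i\ell)}(g^{-\infty},z_i)=\angle_p(g^{-\infty},z)=:\alpha$, and because $p$ lies on the ray from $c(i\ell)$ toward $g^{-\infty}$ this also equals $\angle_{c(i\ell)}(p,z_i)$. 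Likewise the Tits metric is $\bar g$-invariant, so $\angle(g^{-\infty},z_i)=\angle(g^{-\infty},z)$ and $\dT(g^{-\infty},z_i)=\dT(g^{-\infty},z)$ for all $i$.

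Next I would record the easy inequalities. At the fixed point $p$ the two ends of the axis subtend angle $\pi$, so $\angle_p(g^{-\infty},z_i)+\angle_p(g^{+\infty},z_i)\ge\pi$; and applying the CAT(0) bound ``angle sum $\le\pi$'' to the ordinary triangles $p,\,c(i\ell),\,r(R)$ with $r(R)$ running out along the ray from $c(i\ell)$ to $z_i$, then letting $R\to\infty$, gives $\angle_p(g^{+\infty},z_i)\le\pi-\alpha$. Passing to the subsequence along which $z_i\to w$ in the cone topology, the rays $[p,z_i]$ converge uniformly on compacta to $[p,w]$, so the Alexandrov angles at the fixed basepoint $p$ converge; hence $a:=\angle_p(g^{-\infty},w)$ and $b:=\angle_p(g^{+\infty},w)$ satisfy $a+b\ge\pi$ and $b\le\pi-\alpha$. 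Lower semicontinuity of the Tits angle in the cone topology supplies the free halves $\angle(g^{-\infty},w)+\angle(w,g^{+\infty})\ge\angle(g^{-\infty},g^{+\infty})=\pi$, as well as $\angle(g^{-\infty},w)\le\angle(g^{-\infty},z)$ and $\dT(g^{-\infty},w)\le\dT(g^{-\infty},z)$.

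The crux, and the step where I expect essentially all the difficulty to lie, is to promote these inequalities to equalities by producing the flat half-plane; the inequalities above do not by themselves force $a+b=\pi$. The extra input is that $w$ is a recurrence point with $w\ne g^{+\infty}$, so $b>0$ and the forward orbit does not escape to $g^{+\infty}$. I would use the angle-$\alpha$ condition holding simultaneously at every axis point $c(i\ell)$ to build, as $i\to\infty$, arbitrarily large ``almost flat'' triangles strung along $c$, and then invoke CAT(0) rigidity (asymptotic equality in the comparison inequalities embeds a Euclidean region) to extract in the limit a genuine flat sector at $p$ spanned by $[p,g^{-\infty})$, $[p,w)$ and $[p,g^{+\infty})$; translating this sector along $c$ by $\bar g$ fills out a flat half-plane $H$ whose boundary line is $c$ and with $w$ an interior ideal point of $\pa H$. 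Equivalently one may phrase the goal as producing a Tits geodesic of length $\pi$ from $g^{-\infty}$ to $g^{+\infty}$ through $w$ and citing the structural fact that such a geodesic joining the two ends of a geodesic line bounds a flat half-plane. Either way, this recurrence-plus-rigidity argument (in the spirit of Ballmann's flat-strip constructions) is the heart of the matter.

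Granting $H$, the remaining assertions are read off from the Euclidean geometry of the half-plane. The ideal arc $\pa H$ is a Tits geodesic of length $\pi$, so $\angle(g^{-\infty},w)+\angle(w,g^{+\infty})=\pi$, with $\angle(g^{-\infty},w)$ realized as the Alexandrov angle $a$ at $p$ and $a=\pi-b<\pi$ because $b>0$. To match these to the quantities attached to $z$, I would use the monotonicity fact that, as the basepoint moves along $c$ toward $g^{-\infty}$, the angle $\angle_{c(j\ell)}(g^{-\infty},z)$ is nondecreasing with limit the Tits angle $\angle(g^{-\infty},z)$; by equivariance $\angle_{c(-i\ell)}(g^{-\infty},z)=\angle_p(g^{-\infty},z_i)$, whose limit along the chosen subsequence is $a$. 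Since the monotone limit is independent of subsequence, $a=\angle(g^{-\infty},z)$, and as $\angle(g^{-\infty},w)=a$ in $H$ this gives $\angle(g^{-\infty},z)=\angle(g^{-\infty},w)$. Finally, because $a<\pi$ the angular and Tits metrics agree on the pairs $(g^{-\infty},\cdot)$ involved, so the same identities upgrade to $\dT(g^{-\infty},w)+\dT(w,g^{+\infty})=\pi$ and $\dT(g^{-\infty},z)=\dT(g^{-\infty},w)$. When instead $w=g^{+\infty}$ one has $b=0$ and only the angular statements survive, exactly as in the lemma.
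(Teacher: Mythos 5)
First, a point of reference: the paper does not prove this lemma at all --- it is quoted verbatim from Ruane (\cite{Ru01}, Lemma 4.1) and used as a black box. So your attempt has to be judged as a self-contained proof, and as such it has a genuine gap exactly where you say ``the heart of the matter'' lies. Everything you actually establish (equivariance, $a+b\ge\pi$, $b\le\pi-\alpha$, lower semicontinuity of the Tits angle, the triangle inequality $\angle(g^{-\infty},w)+\angle(w,g^{+\infty})\ge\pi$) is the routine part; the construction of the flat half-plane, equivalently the promotion of these inequalities to equalities, is only described as a plan (``build arbitrarily large almost flat triangles \dots invoke CAT(0) rigidity \dots''), not carried out. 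Note moreover that your one nontrivial upper bound, $b\le\pi-\alpha$, is an estimate on the Alexandrov angle at the single point $p$, and with $\alpha=\angle_p(g^{-\infty},z)$ rather than the Tits angle $\angle(g^{-\infty},z)$. Since the Tits angle is a supremum over basepoints, a bound at one point gives no upper bound whatsoever on $\angle(w,g^{+\infty})$, which is what any squeeze argument needs; so your inequalities indeed cannot close, and the entire burden falls on the unexecuted step.

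Two parts of that sketch are also wrong as stated. First, ``translating this sector along $c$ by $\bar g$ fills out a flat half-plane with $w$ an interior ideal point'' cannot work: $\bar g$ does not fix $w$ ($w$ is merely an accumulation point of the orbit $\bar g^i\cdot z$), so the $\bar g$-translates of a sector with ideal vertex $w$ are sectors with ideal vertices $\bar g^k\cdot w$, and they do not assemble into a half-plane containing $[p,w)$. Second, ``$w\ne g^{+\infty}$, so $b>0$'' is unjustified: only the Tits metric separates boundary points; the angle at a particular point between distinct ideal points can vanish (e.g.\ in a tree). The standard way to complete the proof --- and essentially Ruane's --- reverses your order: one first proves the Tits equalities and only then reads off the half-plane. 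Concretely, by Proposition II.9.8 of \cite{BH}, $\angle_{c(t)}(\xi,\eta)$ increases to $\angle(\xi,\eta)$ as the basepoint moves along $c$ toward $\xi$; run your angle-sum-$\le\pi$ triangle argument, but with apex at $c(i\ell+s_i)$ where $s_i\to-\infty$ and $i\ell+s_i\to+\infty$, so that the apex angle $\angle_{c(s_i)}(g^{-\infty},z)$ tends to the Tits angle $\angle(g^{-\infty},z)$. This yields $\angle_{c(t)}(w,g^{+\infty})\le\pi-\angle(g^{-\infty},z)$ for every $t$, hence $\angle(w,g^{+\infty})\le\pi-\angle(g^{-\infty},z)$ by II.9.8 applied on the $g^{+\infty}$ side, while semicontinuity gives $\angle(g^{-\infty},w)\le\angle(g^{-\infty},z)$. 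Combined with $\angle(g^{-\infty},w)+\angle(w,g^{+\infty})\ge\pi$, all inequalities collapse to equalities at once, proving the angular claims and showing these Tits angles are realized as Alexandrov angles at every point of $c$; when $w\ne g^{\pm\infty}$ both angles lie in $(0,\pi)$, so the Flat Sector Theorem gives a flat sector at each $c(s)$ spanned by $[c(s),w)$ and $[c(s),g^{+\infty})$, these sectors are nested as $s\to-\infty$ (each is convex and contains the later ones), and their union is the flat half-plane; finally $\dT=\angle$ for pairs at angle less than $\pi$ converts the angular statements into the Tits ones. One last remark: your repeated appeal to convergence of Alexandrov angles at the fixed basepoint under cone convergence is in fact correct in CAT(0) spaces (since $\angle_p(z_i,w)\le\bar\angle_p\bigl(\rho_i(t),\rho_w(t)\bigr)\to 0$ for fixed $t$, plus the triangle inequality for angles at $p$), but it is not a standard quotable fact and you assert it without proof.
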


Recall that a hyperbolic isometry is of rank one if none of its axes
bounds a flat half plane, and it is of higher rank otherwise.

\begin{lemma}\label{no rank 1}
There is no rank one isometry in $G$.
\end{lemma}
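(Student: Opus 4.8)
The plan is to argue by contradiction: suppose $g\in G$ is a rank one isometry and derive a contradiction from the interaction between the dynamics of $\bar g$ on $\pa X$ and the fact that $\pa X = C_1 * C_2$ is the join of the two disjoint Cantor sets $C_1,C_2$. The first and central step is to upgrade the rank one hypothesis into genuine north-south dynamics. Let $c$ be an axis of $g$. By Lemma \ref{accum pt half flat}, if $z\neq g^{-\infty}$ and $w$ is any accumulation point of $(\bar g^i\cdot z)$ in the cone topology with $w\neq g^{\infty}$, then $c$ together with a ray from $c(0)$ to $w$ spans a flat half plane, contradicting that $g$ is rank one. Since $\pa X$ is compact, the sequence $(\bar g^i\cdot z)$ has accumulation points and all of them equal $g^{\infty}$; hence $\bar g^i\cdot z\to g^{\infty}$ for every $z\neq g^{-\infty}$, and dually $\bar g^{-i}\cdot z\to g^{-\infty}$ for every $z\neq g^{\infty}$. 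I expect this extraction of full-orbit convergence (rather than mere subsequential convergence) from the no-flat-half-plane condition to be the main obstacle; everything afterward is a topological consequence.

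Next I would reduce to the case where $\bar g$ stabilizes each Cantor set. Replacing $g$ by $g^2$ does not change its axes or its endpoints $g^{\pm\infty}$, so $g^2$ is still rank one; and by the homomorphism $G\to\Z/2$ constructed in Lemma \ref{subgroup stabilizing C_1, C_2}, the element $g^2$ lies in the kernel and so $\bar g^2\cdot C_i=C_i$ for $i=1,2$. Thus I may assume $\bar g\cdot C_i=C_i$, and in particular each $C_i$ is a closed, infinite, $\bar g$-invariant subset of $\pa X$.

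I would then locate the endpoints. Suppose $g^{\infty}\in C_1$. Choosing infinitely many points $z\in C_2\setminus\{g^{-\infty}\}$ (possible since $C_2$ is infinite), invariance gives $\bar g^i\cdot z\in C_2$ for all $i$, while north-south dynamics gives $\bar g^i\cdot z\to g^{\infty}$; since $C_2$ is closed this forces $g^{\infty}\in C_2$, contradicting $C_1\cap C_2=\emptyset$. The same argument with $C_1,C_2$ interchanged rules out $g^{\infty}\in C_2$, and applying it to $g^{-1}$ rules out $g^{-\infty}\in C_1\cup C_2$. Hence both $g^{\infty}$ and $g^{-\infty}$ lie in $\pa X\setminus(C_1\union C_2)$, i.e.\ each is an interior point of an arc.

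Finally I would dispose of the remaining interior case. Let $g^{\infty}$ lie in the interior of an arc whose endpoint in $C_1$ is $a$. Since $a\in C_1$ while $g^{-\infty}$ is interior, we have $a\neq g^{-\infty}$, so $\bar g^i\cdot a\to g^{\infty}$; but $\bar g^i\cdot a\in C_1$ for all $i$ and $C_1$ is closed, whence $g^{\infty}\in C_1$, contradicting that $g^{\infty}$ is an interior point. This contradiction completes every case, so $G$ contains no rank one isometry. The conceptual heart of the argument is that $C_1$ and $C_2$ are disjoint closed $\bar g$-invariant sets, each of which would have to absorb the attracting fixed point under north-south dynamics, which is impossible no matter where that fixed point is placed.
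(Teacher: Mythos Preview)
Your proof is correct and rests on the same idea as the paper's: the closed $\bar g$-invariant sets $C_1,C_2$ force orbits to accumulate away from $g^{\infty}$, which via Lemma~\ref{accum pt half flat} produces a flat half plane. The paper simply runs this directly rather than by contradiction---it picks $y\in C_2$ with $g^{\infty}\notin C_2$, observes the accumulation point lies in $C_2\ne\{g^{\infty}\}$, and invokes the lemma---thereby avoiding your passage to $g^2$ and the case analysis on the location of $g^{\pm\infty}$.
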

\begin{proof}
Take any $g \in G$. Assume without loss of generality that
$g^\infty\in \pa X \setminus C_2$. Then for any point $y\in C_2$,
$\bar g^n\cdot y$ cannot accumulate at $g^\infty$ since $C_2$ is
closed in $\pa X$. Any accumulation point of $\bar g^n\cdot y$ will
form a boundary of a half plane with $g^{\pm\infty}$ by Lemma
\ref{accum pt half flat}. So $g$ is not rank one.
\end{proof}

We note also that no finite subset of points on the boundary is
stabilized by $G$, which readily follows from a result by Ruane, quoted in a paper
by Papasoglu and Swenson, and the fact that our $\pa X$ is not a suspension.

\begin{lemma}[Ruane, \cite{PaSw09} Lemma 26]\label{no finite orbit}
If $G$ virtually stabilizes a finite subset $A$ of $\pa X$, then $G$
virtually has $\Z$ as a direct factor. In this case $\pa X$ is a
suspension.
\end{lemma}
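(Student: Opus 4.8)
The plan is to pass to a finite-index subgroup with a fixed point at infinity, promote that fixed point to a $G$-invariant Euclidean line, and read off both conclusions from the resulting product splitting.

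First I would reduce to a single fixed boundary point. As $G$ virtually stabilizes $A$, a finite-index subgroup $G_0 \le G$ satisfies $G_0\cdot A = A$, and since $G_0$ permutes the finitely many points of $A$, a further finite-index subgroup $G_1$ fixes some $\xi \in A$. Working with the Busemann function $b_\xi$, each $g\in G_1$ carries horospheres centred at $\xi$ to horospheres centred at $\xi$, so $b_\xi\of g = b_\xi + c(g)$ and $g\mapsto c(g)$ is a homomorphism $c\colon G_1\to\R$. This $c$ is nontrivial: otherwise $b_\xi$ would be $G_1$-invariant, hence would descend to a continuous---therefore bounded---function on the compact quotient $X/G_1$, contradicting the unboundedness of a Busemann function along its defining ray. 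Moreover any $g$ with $c(g)\neq 0$ has $d(x,gx)\ge|c(g)|>0$, so it is not parabolic; since a geometric action on a proper CAT(0) space is by semisimple isometries, such a $g$ is hyperbolic. Thus cocompactness rules out a purely parabolic fixed point.

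The heart of the argument is to upgrade the fixed point to an invariant line. Using cocompactness together with the theory of Busemann functions and parallel geodesics (the Flat Strip Lemma and its consequences), $\xi$ admits an antipode $\xi'$, the biinfinite geodesics from $\xi'$ to $\xi$ are mutually parallel, and their union is a $G_1$-invariant closed convex subset $C$ isometric to a product $Y\times\R$ on which $G_1$ acts cocompactly, with the $\R$-factor pointing to $\xi$ (parallel lines share endpoints, so $\xi'$ is canonical and $G_1$ fixes it after at most an index-two passage). I expect this step to be the main obstacle: a fixed point at infinity only supplies asymptotic rays a priori, and turning these into a genuine invariant $\R$-factor is exactly where the geometry enters.

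Given the splitting, both conclusions follow. Since $C$ is closed, convex and $G_1$-invariant and $G_1$ is cocompact on both $C$ and $X$, an orbit in $C$ is coarsely dense in $X$; hence $C$ lies at finite Hausdorff distance from $X$ and $\pa X = \pa(Y\times\R) = \{\xi,\xi'\}\ast\pa Y$, a suspension. For the group statement, the product $C = Y\times\R$ has a nontrivial Euclidean factor containing the line to $\xi$; $G_1$ preserves the splitting and acts on the $\R$-factor through the translations $c(g)$. By the standard structure theory of geometric actions on CAT(0) spaces with a Euclidean factor---whereby a cocompact group virtually splits off the lattice of translations of its maximal flat factor as a direct factor---a finite-index subgroup of $G_1$, and hence of $G$, is isomorphic to $K'\times\Z$, giving the claimed virtual $\Z$ direct factor. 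One cannot instead argue merely that $c(G_1)$ is discrete: the flat case $X=\R^2$, $G=\Z^2$ shows the image may be dense, so the Euclidean-factor argument is genuinely needed.
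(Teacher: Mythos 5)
You should first be aware that the paper contains no proof of this statement: it is quoted verbatim from Papasoglu--Swenson (\cite{PaSw09}, Lemma 26), where it is attributed to Ruane, so your attempt is a reconstruction of Ruane's theorem rather than of any argument appearing in this paper. Your outline does match the standard proof in the literature: pass to a finite-index subgroup $G_1$ fixing some $\xi\in A$; extract the Busemann homomorphism $c\colon G_1\to\R$ (your treatment of this, including nontriviality via cocompactness and the fact that a geometric action is by semisimple isometries, is correct and complete); produce a $G_1$-invariant closed convex subset $C\cong Y\times\R$ whose $\R$-lines point at $\xi$; and finish with the splitting theorem for cocompact actions preserving a Euclidean factor, plus coarse density of $C$ in $X$, to get both the virtual $\Z$ factor and the suspension structure on $\pa X$.

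The genuine gap is the step you yourself flag as ``the main obstacle,'' and it is not a removable technicality: it is the entire content of Ruane's theorem. You assert that $\xi$ admits an antipode $\xi'$, that all geodesic lines from $\xi'$ to $\xi$ are mutually parallel, and that their union is a $G_1$-invariant closed convex set on which $G_1$ acts cocompactly; but the only justification offered --- ``parallel lines share endpoints, so $\xi'$ is canonical'' --- is circular. In a general CAT(0) space, two lines with the same forward endpoint need not be parallel and need not share their backward endpoint (in $\H^2$, lines asymptotic at one end diverge at the other), so the canonicity of $\xi'$ is precisely what must be proved, and it can only come from the presence of the cocompact action fixing $\xi$. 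Even the existence of a single geodesic line with forward endpoint $\xi$ requires an argument here, since $X$ is not assumed geodesically complete: one must take a ray $r$ toward $\xi$, choose $g_n\in G_1$ with $d(g_n\cdot x_0, r(n))$ uniformly bounded, and extract a limit of the rays $g_n^{-1}\cdot r$ by Arzel\`a--Ascoli, using that each $g_n$ fixes $\xi$; the parallelism, invariance, and cocompactness of $C$ need further work still. Without this step, your first two steps cannot conclude anything: as your own $\Z^2$-on-$\R^2$ example shows, $c(G_1)$ may be dense in $\R$, and a nontrivial homomorphism to $\R$ by itself yields neither a $\Z$ direct factor nor any control on $\pa X$. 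So the proposal is an accurate road map of the known proof, with its elementary portions done correctly, but the core of the theorem is assumed rather than proved.
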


\section{Endpoints of a hyperbolic element}

We will show that there is no hyperbolic element of $G$ with one of
its endpoints in $C_1$ but not the other one. We will proceed by
contradiction, using as a key result the following theorem by
Papasoglu and Swenson to $\pa X$, itself a strengthening of a
previous result by Ballmann and Buyalo \cite{BaBu}. This theorem is
applicable to our $\pa X$ in light of the previous lemmas.

\begin{theorem}[Papasoglu and Swenson, \cite{PaSw09} Theorem
22]\label{PS}
If the Tits diameter of $\pa X$ is bigger than $\frac
{3\pi} 2$ then $G$ contains a rank 1 hyperbolic element. In
particular: If $G$ does not fix a point of $\pa X$ and does not have
rank 1, and $I$ is a (minimal) closed invariant set for the action
of $G$ on $\pa X$, then for any $x\in\pa X$, $\dT (x, I)\leq \frac
\pi 2$.
\end{theorem}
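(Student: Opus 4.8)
The plan is to run the whole argument inside the Tits boundary $\paT X$, which is a complete $\mathrm{CAT}(1)$ space on which $G$ acts by isometries, and to feed it with two facts about hyperbolic elements. First, since $G$ acts geometrically every infinite-order element is a (semisimple) hyperbolic isometry, so such elements are plentiful, and by cocompactness every point of $\pa X$ is a limit of $G$-translates of any fixed point. Second, I would extract from Lemma \ref{accum pt half flat} the precise consequence of the \emph{no rank one} hypothesis: if $g$ is hyperbolic and higher rank, some axis bounds a flat half plane, so there is an accumulation point $w\neq g^{\pm\infty}$ of the forward orbit with $\dT(g^{-\infty},w)+\dT(w,g^{\infty})=\pi$; hence $\dT(g^{-\infty},g^{\infty})\le\pi$, and being endpoints of a geodesic line this forces $\dT(g^{-\infty},g^{\infty})=\pi$ together with an ``equatorial'' corner $w$ at Tits distance $\pi$ from each endpoint. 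I would be careful \emph{not} to promote this to a bound on all geodesics: a non-periodic rank one geodesic (endpoints at $\dT$ strictly between $\pi$ and $\tfrac{3\pi}{2}$) may persist, and the gap between $\pi$ and $\tfrac{3\pi}{2}$ is exactly the room this leaves.

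The engine is a $\pi$-convergence lemma for the boundary action: for any sequence $g_i\to\infty$ in $G$ one may pass to a subsequence and find $n_+,n_-\in\pa X$ so that $\bar g_i\to n_+$ in the cone topology, uniformly on Tits-compact subsets of $\pa X\setminus B_{\dT}(n_-,\tfrac{\pi}{2})$. I would prove this by tracking the basepoint images $g_i\cdot x_0$, which subconverge to some $n_+\in\pa X$ by properness, and converting cone-topology convergence of the isometries $\bar g_i$ into Tits estimates through comparison angles; the constant $\tfrac{\pi}{2}$ is precisely the threshold at which the comparison inequality separates attracted from repelled points. This is the step I expect to be the main obstacle, because the Tits metric is only lower semicontinuous in the cone topology, so one must manufacture genuine metric control along a sequence of isometries rather than mere topological convergence.

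Granting $\pi$-convergence I would prove the first assertion directly: if $\dT(\xi,\eta)>\tfrac{3\pi}{2}$ for some $\xi,\eta$, produce a rank one \emph{axis}. Since $\dT(\xi,\eta)>\pi$ there is a Tits geodesic of length exceeding $\tfrac{3\pi}{2}$ joining $\xi$ to $\eta$; using coarse density of orbits I would find genuine geodesic segments in $X$ shadowing long subsegments of it, and a hyperbolic element whose attracting/repelling data $n_\pm$ sit within $\tfrac{\pi}{2}$ of $\xi,\eta$. The surplus $\tfrac{3\pi}{2}=\pi+\tfrac{\pi}{2}$ is what guarantees the shadowing geodesic is \emph{robustly} contracting: a flat half plane on a limiting axis would, by Lemma \ref{accum pt half flat}, supply a corner $w$ with $\dT(n_-,w)+\dT(w,n_+)=\pi$, and concatenating the length-$\tfrac{\pi}{2}$ segment from $\xi$ to $n_-$ with this half-plane geodesic would cap $\dT(\xi,\eta)$ at $\tfrac{\pi}{2}+\pi$, contradicting the choice of $\xi,\eta$. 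Thus the limit is rank one, and the same cocompactness-plus-limit procedure promotes it to a periodic rank one geodesic, i.e.\ a rank one element of $G$.

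Finally, for the ``in particular'' clause I would argue by contradiction using the first assertion in the form \emph{no rank one element $\Rightarrow\ \op{diam}_{\dT}\pa X\le\tfrac{3\pi}{2}$}. Assume $G$ fixes no point, has no rank one element, $I$ is a minimal closed invariant set, and yet $\dT(x_0,I)>\tfrac{\pi}{2}$ for some $x_0$. The attracting points furnished by $\pi$-convergence are limit points and so, by minimality ($I=\overline{G\cdot y}$), lie in $I$, giving a hyperbolic $g$ with $g^{\pm\infty}\in I$ and (no rank one) $\dT(g^{-\infty},g^{\infty})=\pi$ realized through an equatorial corner $w$. Choosing $g$, via the flexibility in $\pi$-convergence, so that $w$ points away from $x_0$, the geodesic from $x_0$ to $g^{-\infty}$ of length exceeding $\tfrac{\pi}{2}$ concatenates geodesically with the length-$\pi$ half-plane geodesic $g^{-\infty}\to w\to g^{\infty}$, yielding $\dT(x_0,g^{\infty})>\tfrac{3\pi}{2}$ and contradicting the diameter bound; hence $\dT(x,I)\le\tfrac{\pi}{2}$ for all $x$. (That $I$ cannot instead be too small is the easy complementary point: a Tits circumradius below $\tfrac{\pi}{2}$ would give a unique, hence $G$-invariant, circumcenter in the $\mathrm{CAT}(1)$ space $\paT X$, contradicting the no-fixed-point hypothesis.) The delicate ingredient throughout is metric rather than topological, namely forcing the concatenations to remain geodesic so that the sharp constant survives; this, together with the $\pi$-convergence lemma, is the crux.
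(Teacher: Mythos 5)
A point of order first: the paper offers no proof of this statement at all --- it is quoted from Papasoglu--Swenson (\cite{PaSw09}, Theorem 22), so there is no internal proof to compare against; what you have written is an attempt to reprove their theorem, and it contains genuine gaps. The one that breaks the argument outright is the final step of your ``in particular'' clause: to contradict the diameter bound you need a \emph{lower} bound $\dT(x_0,g^\infty)>\tfrac{3\pi}{2}$, but exhibiting a concatenation of segments of total length $>\tfrac{3\pi}{2}$ can never produce a lower bound on $\dT$ --- path lengths bound the Tits distance from \emph{above}. In a CAT(1) space a concatenation of geodesics of total length exceeding $\pi$ is in general not distance-realizing no matter how the junction angles are arranged, and ``forcing the concatenations to remain geodesic'' is not a technical delicacy that can be supplied later: a completely different short route from $x_0$ to $g^\infty$ may exist, and ruling that out requires a mechanism other than the triangle inequality. (Even in your first part the same bookkeeping caps $\dT(\xi,\eta)$ by $\tfrac{\pi}{2}+\pi+\tfrac{\pi}{2}=2\pi$, not $\tfrac{3\pi}{2}$, unless $n_+=\eta$ exactly, so no contradiction with $\dT(\xi,\eta)>\tfrac{3\pi}{2}$ results.)

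Beyond that, three further gaps. Your instinct that $\pi$-convergence is the engine is correct --- it is the key tool in Papasoglu--Swenson's actual argument --- but the statement you propose is false: $\pi$-convergence does not give cone-topology convergence $\bar g_i\to n_+$ uniformly on Tits-compact subsets of $\pa X\setminus \bT(n_-,\pi/2)$; it says only that if $\dT(x,n_-)\geq\theta$ then every limit point of $(\bar g_i\cdot x)$ lies within $\pi-\theta$ of $n_+$. The difference is visible in this very paper's setting: for $X$ a product of two trees and $g$ hyperbolic with endpoints interior to arcs, Lemma \ref{conv of forward seq} shows that orbits of points of $C_1$ accumulate only at the corner points $x_1,x_2$, which are at positive Tits distance from $g^{\infty}$ --- they do not converge to $g^\infty$ even though they are Tits-far from $g^{-\infty}$. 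Second, you repeatedly invoke Tits geodesics joining points at distance $>\pi$ (``a Tits geodesic of length exceeding $\tfrac{3\pi}{2}$ joining $\xi$ to $\eta$''); $\paT X$ is complete CAT(1) but not proper, so geodesics are guaranteed only between points at distance $<\pi$, and Hopf--Rinow is unavailable. Third, the clause ``the same cocompactness-plus-limit procedure promotes it to a periodic rank one geodesic'' dismisses in one breath what is actually the hard content of Ballmann--Buyalo and of the Papasoglu--Swenson strengthening: closing up a rank one geodesic into an axis of an element of $G$ is where the constant $\tfrac{3\pi}{2}$ is really spent, and nothing in the sketch engages with it. (A smaller item: that the attracting points $n_\pm$ lie in the minimal set $I$ also needs an argument; minimality gives $I=\overline{G\cdot y}$ for $y\in I$, not that limit points of arbitrary sequences applied to a basepoint lie in $I$.)
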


We put the word minimal in parentheses as it is not a necessary
condition, for if $I \subset \pa X$ is a closed invariant set, then
it contains a minimal closed invariant set $I'$, and so for any
$x\in \pa X$, $\dT(x,I) \leq \dT (x, I')\leq \frac \pi 2$.

Note that the above theorem implies that $\pa X$ has finite Tits
diameter, and hence the CAT(1) space $\paT X$ is connected.

Now assume that $g\in G$ is hyperbolic such that $g^\infty \in C_1$ and
$g^{-\infty} \in \pa X \setminus C_1$.

\begin{lemma}\label{contains F}
$\Fix(\bar g)$ contains boundary of a 2-flat.
\end{lemma}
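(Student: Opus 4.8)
The plan is to reduce the statement to producing a flat plane inside the minimal set $\Min(g)$ that contains the axis $c$ of $g$. Recall that for the hyperbolic (semisimple) isometry $g$ the minimal set splits as a metric product $\Min(g)=\R\times Y_0$ with $Y_0$ convex, the axis $c$ corresponding to $\R\times\{o\}$, and $g$ acting as translation by its translation length on the $\R$-factor and as the identity on $Y_0$. Consequently any $2$-flat $F\subset\Min(g)$ containing $c$ has the form $F=\R\times\sigma$ for a geodesic line $\sigma$ through $o$ in $Y_0$, and $g$ acts on $F$ as a pure translation; such a translation fixes every direction of $\pa F$, so $\pa F\subset\Fix(\bar g)$. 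Thus it suffices to produce a geodesic line through $o$ inside $Y_0$, i.e.\ a $2$-flat in $\Min(g)$ through $c$.

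First I would manufacture a flat half-plane bounded by $c$ whose ideal boundary meets $C_2$. Since $g^\infty\in C_1$ and $C_2$ is closed, disjoint from $C_1$, and $\bar g$-invariant, pick any $y\in C_2$; the forward orbit $(\bar g^n\cdot y)_{n\ge0}$ stays in $C_2$, so each cone-accumulation point lies in $C_2$ and is in particular different from $g^\infty$. By Lemma \ref{accum pt half flat} such an accumulation point $w$ lies on a Tits geodesic $[g^{-\infty},g^\infty]$ of length $\pi$, satisfies $\angle(g^{-\infty},w)=\angle(g^{-\infty},y)$, and the axis $c$ together with the ray from $c(0)$ to $w$ spans a flat half-plane; this is consistent with $g$ not being rank one (Lemma \ref{no rank 1}). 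Writing the parallel set as $P(c)=\R\times Y$, with $g$ inducing an elliptic isometry $g_Y$ of $Y$ fixing $o$ and $\Min(g)=\R\times Y_0$ for $Y_0=\Fix(g_Y)\subset Y$, this half-plane has the form $\R\times\sigma^+$ for a geodesic ray $\sigma^+$ from $o$ in $Y$. Running the same construction with $g^{-1}$, or with the backward orbit of $y$, produces a second ray $\sigma^-$ from $o$.

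The heart of the matter, and the step I expect to be the main obstacle, is to upgrade these rays to a single geodesic line fixed pointwise by $g_Y$, i.e.\ to place a whole line inside $Y_0$ rather than merely inside $Y$. A flat half-plane bounded by $c$ need not lie in $\Min(g)$: it does so exactly when $g_Y$ fixes the corresponding ray pointwise, and a priori the elliptic isometry $g_Y$ could rotate $\sigma^+(\infty)\in\pa Y$ nontrivially. Here I would exploit that $C_2$ is a Cantor set, hence totally disconnected: the $\bar g$-orbit closure of the relevant boundary point is a compact $\bar g$-invariant subset of $C_2$ whose points all sit at one fixed angle from $g^{\pm\infty}$, and a nontrivial rotation of $g_Y$ would force either a connected circle of such directions (impossible in the totally disconnected $C_2$) or, after passing to a power of $g$ with the same minimal set, a periodic hence $g_Y$-fixed ideal point; in the remaining cases a circumcenter argument for sets of Tits-radius $<\tfrac\pi2$ in $\paT Y$ supplies the fixed direction. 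Given one $g_Y$-fixed ideal point $\eta$, the ray $o\eta$ lies in $Y_0$; combining the rays from the forward and backward constructions, and checking that they are antipodal in $Y_0$ so that $g$ acts as a genuine translation rather than a glide reflection, yields a geodesic line $\sigma\subset Y_0$ through $o$. Then $F=\R\times\sigma\subset\Min(g)$ is the desired $2$-flat, and $\pa F\subset\Fix(\bar g)$.
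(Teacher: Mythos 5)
You correctly reduce the lemma to producing a geodesic line in the cross-section $Y_0$ of $\Min(g)=\R\times Y_0$, and you correctly identify the crux: the half-planes supplied by Lemma \ref{accum pt half flat} live only in the parallel set $P(c)=\R\times Y$, so one must find a direction fixed by the induced elliptic isometry $g_Y$. But your resolution of that crux is not a proof. The trichotomy you invoke --- a nontrivial ``rotation'' of $g_Y$ forces either a circle of directions, or a periodic ideal point, or an invariant set of Tits radius less than $\pi/2$ admitting a circumcenter --- is unsubstantiated, and it is false as a general principle: an isometry of a compact metric space need have neither periodic points nor connected orbit closures (an odometer acting on a Cantor set is an isometry with no periodic points and totally disconnected orbit closures), and no reason is given why the invariant set of directions should have radius less than $\pi/2$. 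The appeal to total disconnectedness of $C_2$ is also misdirected: the relevant directions are points of the ``equator'' of $\pa P(c)$, at Tits distance $\pi/2$ from $g^{\pm\infty}$, whereas the accumulation points you produced lie in $C_2$ at Tits distance $\dT(g^{-\infty},y)$ from $g^{-\infty}$, which at this stage of the paper is not known to equal $\pi/2$; so the invariant set of directions need not sit inside $C_2$ at all. Finally, even granting fixed directions $\eta^{\pm}$ from the forward and backward constructions, the rays $[o,\eta^{+})$ and $[o,\eta^{-})$ in $Y_0$ need not make angle $\pi$ at $o$, so nothing makes their union a geodesic line; ``checking that they are antipodal'' is essentially as hard as the original problem.

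The paper closes exactly this gap by using the ambient group, which your argument never touches beyond $g$ itself: by Theorems 3.2 and 3.3 of \cite{Ru01}, $\Min(g)=Y\times\R$ (this $Y$ is already your $Y_0$) and the quotient $C(g)/\langle g\rangle$ of the centralizer acts geometrically on $Y$; the non-rank-one property (Lemma \ref{no rank 1}) yields a segment joining $g^{\pm\infty}$ fixed by $\bar g$, hence $\pa Y\neq\varnothing$; and then Theorem 11 of \cite{Sw99} produces a hyperbolic element of $C(g)/\langle g\rangle$, whose axis is a genuine geodesic line in $Y$. Crossing that axis with the $\R$-factor gives the $2$-flat, whose boundary lies in $\pa\Min(g)\subset\Fix(\bar g)$. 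The lesson is that the dynamics of the single isometry $g$ cannot supply the line; the geometric action of the centralizer is what does.
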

\begin{proof}
By Lemma \ref{no rank 1}, $g^{\pm\infty}$ bound a half plane, so
there is a segment joining $g^{\pm\infty}$ fixed by $g$, then it is
contained in $\pa \Min(g)$. Then by Theorems 3.2 and 3.3 of
\cite{Ru01}, $\Min(g) = Y \times \R$ with $\pa Y \neq \varnothing$,
and $C(g)/\langle g \rangle$ acts on the CAT(0) space $Y$
geometrically. Since $Y$ has nonempty boundary, so by
 Theorem 11 of \cite{Sw99} there is a hyperbolic element in
$C(g)/\langle g \rangle$ which has an axis in $Y$ with two endpoints
on $\pa Y$. Thus there is a 2-flat in $\Min(g)$.
\end{proof}
Denote this 2-flat by $F$, and let $z$ be a point in $\pa F \cap
C_1$ other than $g^\infty$.

\begin{figure}[htb]
  \centering
  \def\svgwidth{200pt}
  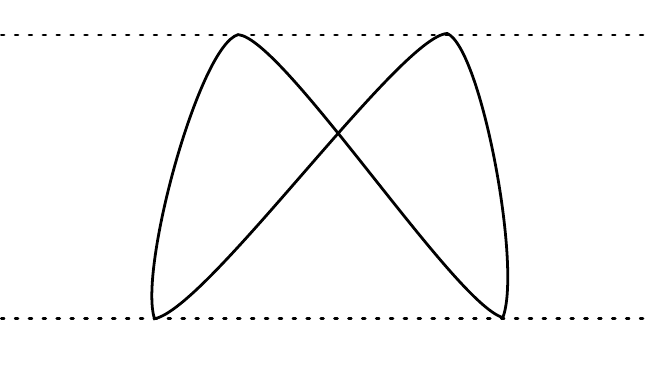
  \caption{Boundary of a 2-flat in $\Min(h)$}
\end{figure}

\begin{lemma}
If $F_0$ is a 2-flat whose boundary is contained in $\Fix(\bar h)=\pa
\Min(h)$ for some hyperbolic $h\in G$, then $\pa F_0$ intersects each of $C_1$ and
$C_2$ at exactly 2 points.
\end{lemma}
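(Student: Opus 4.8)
The plan is to combine a purely topological count with the metric rigidity of a flat's Tits boundary. Topologically $\pa X \cong C_1 * C_2$ is the join of two totally disconnected sets, hence a graph-like space whose only $1$-dimensional pieces are the arcs, each arc joining one point of $C_1$ to one point of $C_2$, with a unique arc between any such pair. Since $\pa F_0$ is an embedded circle it cannot lie inside a single arc, so it must pass through points of $C_1\union C_2$; at each such point the local model is a cone over a Cantor set, so the circle enters along one arc and leaves along another. Thus $\pa F_0$ is a bipartite cycle meeting $C_1$ and $C_2$ alternately, so $\pa F_0\cap C_1$ and $\pa F_0\cap C_2$ have a common cardinality $k$, and the uniqueness of arcs forbids a bigon, giving $k\ge 2$. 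I would not try to bound $k$ from above topologically, since the metric step below makes $k$ finite automatically.

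Next I would bring in the geometry. As the Tits boundary of a Euclidean $2$-flat, $\paT F_0$ is a round circle, that is, a closed geodesic of length $2\pi$ in $\paT X$. Its $2k$ vertices in $C_1\union C_2$ cut it into $2k$ geodesic sub-arcs, each running from a point of $C_1$ to an adjacent point of $C_2$ and containing no other vertex. If I can show each sub-arc has Tits length at least $\tfrac{\pi}{2}$, then
\[
2\pi = \sum_{\text{sub-arcs}} \ell \ge 2k\cdot\tfrac{\pi}{2} = k\pi,
\]
forcing $k\le 2$, and with $k\ge 2$ this yields $k=2$. Since a sub-arc is at least as long as the Tits distance between its endpoints, it suffices to prove that a point of $C_1$ and a point of $C_2$ joined by a boundary arc of $\pa F_0$ are at Tits distance $\ge\tfrac{\pi}{2}$. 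Equivalently, one may instead aim to show that two distinct points of the \emph{same} Cantor set lying on $\pa F_0$ are antipodal on the round circle, and finish with the observation that a round circle carries at most two mutually antipodal points.

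The hard part is this metric separation, and here the hypothesis $\pa F_0\subset\Fix(\bar h)=\pa\Min(h)$ enters. As in the previous lemma, $\Min(h)=Y\times\R$, so $\pa\Min(h)$ is the spherical suspension $\paT Y * \{h^{+\infty},h^{-\infty}\}$ with poles the endpoints of the axis; in particular every point of $\pa F_0$ lies on a meridian, a Tits geodesic of length $\pi$ from $h^{+\infty}$ to $h^{-\infty}$. I would then split according to the $\R$-factor: taking $F_0$ inside $\Min(h)$, it either projects to a point of $\R$, so that $\pa F_0$ lies in the equator $\paT Y$, or projects onto $\R$, so that $F_0$ contains a fibre line and $\pa F_0$ runs through both poles as a union of two meridians. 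Analyzing how either configuration can meet the Cantor sets—using that for the element $g$ under consideration $g^{\infty}\in C_1$ and $g^{-\infty}\notin C_1$, that $\pa F_0$ must meet $C_2$, and that the latitude (distance-to-pole) function is $1$-Lipschitz—should place the two endpoints of each boundary arc at latitudes differing by at least $\tfrac{\pi}{2}$ and so give the required bound. I expect the genuine obstacle to be precisely this last point: controlling where the intermediate $C_1$- and $C_2$-vertices can sit along a meridian, i.e. translating the join (topological) incidence into the suspension (metric) coordinates. Once that separation is in hand, everything else is bookkeeping.
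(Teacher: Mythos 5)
Your topological step is sound, and in fact more careful than what the paper takes for granted: the interiors of the join-arcs are exactly the connected components of $\pa X \setminus (C_1 \union C_2)$, so $\pa F_0$ is an alternating cycle with $k \geq 2$ points in each Cantor set, and since $\paT F_0$ is a closed geodesic of length $2\pi$ in $\paT X$, the bound $k \leq 2$ would indeed follow if each of the $2k$ sub-arcs had Tits length at least $\pi/2$. But that separation estimate is precisely where your proof stops, and the gap is fatal rather than ``bookkeeping,'' for two reasons. First, the mechanism you propose cannot deliver it: if $F_0$ lies in a slice $Y \times \{t\}$ of $\Min(h) = Y \times \R$ (a configuration allowed by the hypothesis, since then $\pa F_0 \subset \paT Y \subset \pa\Min(h)$), every point of $\pa F_0$ is at Tits distance exactly $\pi/2$ from both poles $h^{\pm\infty}$, so the latitude function is constant on $\pa F_0$ and carries no information about where the $C_1$- and $C_2$-vertices sit. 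Your dichotomy is also not exhaustive: a flat in $Y \times \R$ whose projection to $\R$ is onto need not contain a fiber line (it can be tilted), in which case $\pa F_0$ misses both poles. Second, the estimate you want --- consecutive vertices of $\pa F_0$ at Tits distance at least $\pi/2$ --- is essentially Lemma \ref{pi/2 apart}, which the paper proves only in Section 4, under the additional $\Z^2$ hypothesis, and using the present lemma as an ingredient. In Section 3 no lower bound on Tits lengths of arcs is available at all, so building the proof on such a bound inverts the paper's logical order and courts circularity.

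The paper's proof avoids metric lower bounds entirely and argues by contradiction from the dynamics of $\bar h$: if there were at least three points of each $C_i$ on $\pa F_0$, pick a ``diagonal'' join-arc joining two non-adjacent vertices, chosen so that not both are endpoints of $h$. This arc is setwise invariant under $\bar h$ (it is the unique arc of the join connecting its two fixed endpoints), but it cannot lie in $\Fix(\bar h) = \pa\Min(h)$ because of the suspension structure. Since $\bar h$ acts on $\paT X$ by isometry and $\paT X$ is connected, a short Tits-segment argument inside the arc produces a point $p$ of the open arc with $\bar h \cdot p = p$; then $p \in \pa\Min(h)$ lies on a path in $\pa\Min(h)$ joining $h^{\pm\infty}$, which forces the whole arc into $\pa\Min(h)$ --- a contradiction. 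Any repair of your approach would need an argument of this dynamical kind to obtain the $\pi/2$ separation, at which point the perimeter count becomes superfluous.
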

\begin{proof}
Suppose not, then denote the points at which $\pa F_0$ alternatively
intersects $C_1$, $C_2$ by $x_1, y_1, x_2, y_2, \ldots, x_n, y_n$.
Consider the segment joining $x_1$ and $y_2$. We may assume that not
both of $x_1$, $y_2$ are endpoints of $h$. (If not, choose $y_1$ and
$x_3$ instead.) From the assumption on $\pa F_0$, this segment is
not part of $\pa F_0$. Its two endpoints are fixed, but the arc
joining them is not in $\Fix(\bar h)$ because $\Fix(\bar h)$ is a
suspension with suspension points $h^{\pm\infty}$. However, this arc
is stabilized by $h$ because of the cone topology of $\pa X$. Action
of $G$ on $\paT X$ is by isometries. Take a point $p$ in the open
arc between $x_1$ and $y_2$. Since $\paT X$ is connected there
exists a Tits segment in this arc from $p$ to one of $x_1$ and
$y_2$, say $x_1$. Choose a new point on this segment as $p$ if
necessary, we can assume $\dT(p,x_1) < \dT(y_2, x_1)$. Now
$\dT(\bar h\cdot p, \bar h \cdot x_1) = \dT(\bar h\cdot p, x_1)$ and $\bar h\cdot p$ is
also on the arc. $\bar h\cdot p$ cannot be on the open segment between
$p$ and $x_1$. If $\bar h\cdot p$ were on the open segment between $p$
and $y_2$, the Tits geodesic from $\bar h\cdot p$ to $x_1$ would go
through $p$ or $y_2$, both would contradict $\dT(\bar h\cdot p,
x_1)=\dT(p, x_1)$. So $\bar h\cdot p = p$. Then $p \in \pa\Min(h)$ and
lies on a path in $\pa \Min(h)$ joining $h^{\pm\infty}$, forcing the arc
 to be in $\pa\Min(h)$, which contradicts the previous assertion.
\end{proof}

Denote the segment in $\pa X$ from $g^\infty$ to $z$ passing through
$g^{-\infty}$ by $\beta$. Let $y$ be the point where $\beta$
intersects $C_2$. The essense of the following arguments is to look
for a point in $\paT X$ that is over $\pi/2$ away from $C_1$ or
$C_2$, which are closed $G$-invariant subsets, so obtaining a
contradiction to Theorem \ref{PS}.

\begin{lemma}
$g^{-\infty}$ cannot be on the closed segment in $\beta$ from
$g^\infty$ to $y$.
\end{lemma}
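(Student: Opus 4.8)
The plan is to work entirely inside the Tits boundary of the $2$-flat $F$, where $\pa F$ is a round circle of Tits circumference $2\pi$ in $\paT X$ meeting $C_1$ in the two points $g^\infty, z$ and $C_2$ in two points, one of which is $y$, arranged alternately, with $g^{-\infty}$ the point antipodal to $g^\infty$ (so $\dT(g^\infty, g^{-\infty}) = \pi$). Lying on the closed segment of $\beta$ from $g^\infty$ to $y$ means exactly that the Tits length $a$ of the single arc $[g^\infty, y]\subset \pa F$ is at least $\pi$. So I would assume $a \ge \pi$ and seek a contradiction with Theorem \ref{PS}.

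For the contradicting point I would take $m$, the Tits midpoint of the semicircular arc of $\pa F$ running from $g^\infty$ to $g^{-\infty}$ inside $[g^\infty, y]$. Then $\dT(g^\infty, m) = \dT(g^{-\infty}, m) = \pi/2$, and since $a \ge \pi > \pi/2$ the point $m$ lies in the interior of the arc $[g^\infty, y]$, so $m \notin C_1 \union C_2$. A direct computation on the round circle $\pa F$ shows that, when $a > \pi$, both points of $C_2 \cap \pa F$ lie at Tits distance strictly greater than $\pi/2$ from $m$: the distance from $m$ to $y$ is $\min(a-\tfrac\pi2,\,\tfrac{5\pi}2-a)>\tfrac\pi2$, and the other $C_2$-point stays more than $\pi/2$ away because the three remaining arcs of $\pa F$ have total Tits length $2\pi-a<\pi$. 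Thus $\dT(m, C_2 \cap \pa F) > \pi/2$.

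The main obstacle is to upgrade this to $\dT(m, C_2) > \pi/2$ over the whole Cantor set $C_2$, not merely its two points on $\pa F$; this is precisely the global estimate the section is built around. Here I would exploit that $\Fix(\bar g) = \pa\Min(g)$ is the spherical suspension of $\pa Y$ over the poles $g^{\pm\infty}$ (coming from $\Min(g) = Y \times \R$), so that $m$, being at Tits distance $\pi/2$ from each pole, lies on the equatorial cross-section $\pa Y$. If some $c \in C_2$ satisfied $\dT(m, c) \le \pi/2$, I would run the Tits geodesic from $m$ to $c$ and use Lemma \ref{accum pt half flat} applied to $g$, together with convexity of $\Min(g)$, to force $c$ onto $\pa\Min(g)$ and hence onto $\pa F$, contradicting the computation above; this yields $\dT(m, C_2)>\pi/2$ and so contradicts Theorem \ref{PS}, as $C_2$ is closed and $G$-invariant. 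Finally I would dispose of the borderline case $a=\pi$ (where $g^{-\infty}=y\in C_2$) separately, either by perturbing $m$ slightly toward $g^{-\infty}$ to recover a strict inequality or by arguing directly that an axis with one endpoint in $C_1$ and the other in $C_2$ is incompatible with the join structure. I expect this global step controlling all of $C_2$ to be the only delicate point, the circle computation itself being routine.
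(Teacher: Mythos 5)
Your overall strategy is the same as the paper's: exhibit a point on the arc between $g^\infty$ and $y$ whose Tits distance to one of the closed $G$-invariant sets $C_1$, $C_2$ exceeds $\pi/2$, contradicting Theorem \ref{PS}. But the step you yourself flag as the crux --- upgrading the estimate from $C_2\cap\pa F$ to all of $C_2$ --- is precisely where the proposal breaks down, and the mechanism you offer for it does not work. First, nothing justifies that $\dT(m,c)\le\pi/2$ forces $c$ into $\pa\Min(g)$: Lemma \ref{accum pt half flat} is about accumulation points of orbits $\bar g^n\cdot z$, and neither it nor convexity of $\Min(g)$ converts a distance bound from the equatorial point $m$ into membership in the suspension $\Fix(\bar g)$. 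Second, even granting $c\in\pa\Min(g)$, the conclusion ``hence onto $\pa F$'' is unavailable here: the identification of $\pa\Min(g)$ with the boundary circle of a single $2$-flat (Lemma \ref{boundary is F}) is established only for hyperbolic elements whose endpoints avoid $C_1\union C_2$, whereas the present $g$ has $g^\infty\in C_1$; for this $g$, $\pa\Min(g)$ is the suspension of $\pa Y$ over $g^{\pm\infty}$ and may be far larger than the circle $\pa F$. The correct mechanism is much simpler and purely topological, and it is what the paper uses: in the join, the open arc containing your point meets the rest of $\pa X$ only at its endpoints, so any Tits geodesic from that point to a point of $C_1$ or $C_2$ (a path, and the Tits topology refines the cone topology) must pass through $g^\infty$ or through $y$; combined with $\dT(y,C_1)>0$ and $\dT(g^\infty,C_2)>0$ (cone-compactness of $C_i$ plus lower semicontinuity of $\dT$), this gives the needed lower bounds with no reference to $\Min(g)$ at all.

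Your handling of the borderline case $a=\pi$ (i.e.\ $g^{-\infty}=y$) is also defective: perturbing $m$ ``toward $g^{-\infty}$'' moves it toward $y$ and pushes $\dT(m,y)$ below $\pi/2$, destroying the estimate, while the alternative (that an axis with one endpoint in $C_1$ and one in $C_2$ is ``incompatible with the join structure'') is essentially the statement this section is in the middle of proving and cannot be invoked. The paper avoids both problems at once by measuring the distance to $C_1$ rather than $C_2$: pick $\delta$ with $0<\delta<\pi/2$ and $2\delta\le\dT(y,C_1)$, and take $p$ on the segment with $\dT(p,g^\infty)=\pi/2+\delta$, so that $\dT(p,y)\ge\pi/2-\delta$; a Tits geodesic from $p$ to $x\in C_1$ through $y$ then has length at least $(\pi/2-\delta)+2\delta=\pi/2+\delta$, and through $g^\infty$ length greater than $\pi/2+\delta$, so $\dT(p,C_1)\ge\pi/2+\delta>\pi/2$, contradicting Theorem \ref{PS} uniformly, with no case split at $a=\pi$. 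Your argument can be repaired symmetrically (place the point at Tits distance $\pi/2+\epsilon$ from $y$ with $2\epsilon\le\dT(g^\infty,C_2)$ and use the endpoint-passing fact), but as written the proposal has a genuine gap at its central step.
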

\begin{proof}
Suppose not. The Tits length of this segment from $g^\infty$ to $y$ is at least
$\pi$. Let $0<\delta<\pi/2$ be such that $2\delta \leq \dT(y, C_1)$. Take a point $p$ on this segment so
that $\dT(p,g^\infty)=\pi/2+ \delta$. Then $\dT(p,y) \geq \pi/2 - \delta$.
Now for any point $x\in C_1$ other than $g^\infty$, if the Tits
geodesic segment from $p$ to $x$ passes through $y$, then
$$\dT(p, x)
\geq \dT(p,y)+\dT(y,C_1) \geq (\pi/2 - \delta) + 2\delta = \pi/2 + \delta;$$
while if it passes through $g^\infty$,
then obviously $\dT(p, x) > \dT(p, g^\infty) = \pi/2 +\delta$. So
$\dT(p, C_1) \geq \pi/2 + \delta$, which contradicts Theorem \ref{PS}.
\end{proof}

Now we deal with the case that $g^{-\infty}$ is in the open segment
in $\beta$ from $y$ to $z$. We state a lemma first which will also
be used in later arguments.

\begin{lemma}\label{conv of forward seq}
Suppose $h \in G$ is a hyperbolic element such that $F_0 \subset
\Min(h)$ whose boundary intersects $C_1$ and $C_2$ alternatively at
$x_1, y_1, x_2, y_2$. Assume that the endpoint $h^{-\infty}$ is on some
 open arc, say the open arc between $x_i$ and $y_j$, while another
endpoint $h^\infty$ is not contained in the closed arc between $x_i$
and $y_j$. Then for any point $x\in C_1$ other than $x_1$ and $x_2$,
the sequence $\bar h^n \cdot x$ can only accumulate at $x_1$ or
$x_2$. Similarly, for any point $y\in C_2$ other than $y_1$ and
$y_2$, the sequence $\bar h^n \cdot x$ can only accumulate at $y_1$ or
$y_2$.
\end{lemma}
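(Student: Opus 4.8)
The plan is to read off the accumulation points from Ruane's Lemma \ref{accum pt half flat} and to reduce the statement to a description of $\Fix(\bar h)\cap C_1$. First I would record the two easy facts. Since $\bar h$ stabilizes $C_1$ and $C_2$ (Lemma \ref{subgroup stabilizing C_1, C_2}), every term $\bar h^n\cdot x$ lies in $C_1$, and as $C_1$ is closed in the cone topology any accumulation point $w$ of $(\bar h^n\cdot x)$ lies in $C_1$ as well. Because $h^{-\infty}$ and $h^\infty$ sit in the interiors of arcs, they are not vertices of $C_1\union C_2$; in particular $x\neq h^{-\infty}$, so Lemma \ref{accum pt half flat} applies to the sequence, and $w\neq h^\infty$.

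Since $w\neq h^\infty$, Lemma \ref{accum pt half flat} yields $\dT(h^{-\infty},w)+\dT(w,h^\infty)=\pi$ together with a flat half-plane $H_w$ whose boundary line is an axis $c$ of $h$ and whose remaining ideal point is $w$. Every geodesic parallel to an axis is again an axis of the same translation length (flat strip theorem), so the entire half-plane $H_w$ lies in $\Min(h)$; hence $w\in\pa\Min(h)=\Fix(\bar h)$. Thus it suffices to prove that $\Fix(\bar h)\cap C_1=\{x_1,x_2\}$, for then every accumulation point of $(\bar h^n\cdot x)$ is forced to be $x_1$ or $x_2$. The assertion for $y\in C_2$ is identical after exchanging the roles of $C_1$ and $C_2$.

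The heart of the matter is therefore to show that $\bar h$ fixes no point of $C_1$ besides $x_1$ and $x_2$. Here I would use the structure $\Min(h)=Y\times\R$ supplied in the proof of Lemma \ref{contains F}, under which $\Fix(\bar h)=\pa\Min(h)$ is the spherical suspension of $\paT Y$ with poles $h^{\pm\infty}$, and $F_0=\ell\times\R$ for a geodesic line $\ell\subset Y$ whose two ends project, under the suspension, to the $\paT Y$-directions of $x_1$ and $x_2$ respectively. A hypothetical third fixed point $w\in C_1$ would correspond to a direction $\xi_w\in\paT Y$ different from the two ends of $\ell$, so that the meridian through $\xi_w$ is a Tits semicircle from $h^{-\infty}$ to $h^\infty$ through $w$ that does \emph{not} lie on $\pa F_0$. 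Using the positional hypothesis — that on the circle $\pa F_0$ the points $h^{-\infty}$ and $h^\infty$ separate $x_1$ from $x_2$, so that each of the two semicircles of $\pa F_0$ carries exactly one of them — together with the preceding lemma that the boundary of any flat in $\Min(h)$ meets $C_1$ in exactly two points, I would locate an equatorial direction of $\paT Y$ giving a point of $\paT X$ at Tits distance strictly greater than $\pi/2$ from the closed $G$-invariant set $C_1$, contradicting Theorem \ref{PS}.

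I expect this last step to be the main obstacle. The difficulty is that the candidate fixed point $w$ need not lie on $\pa F_0$, so the exactly-two-points lemma cannot be applied to $F_0$ itself; one must instead exploit the suspension geometry of $\pa\Min(h)$ and the spherical-join trigonometry governing the latitudes of $x_1$, $x_2$, $w$ to produce the far point. The genuinely delicate point will be to verify that the far point is at Tits distance exceeding $\pi/2$ from \emph{all} of $C_1$, and not merely from $C_1\cap\Fix(\bar h)$, since the remaining points of $C_1$ need not lie in $\pa\Min(h)$. Everything else reduces to routine applications of Lemma \ref{accum pt half flat} and the closedness of $C_1$.
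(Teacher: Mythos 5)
Your opening moves coincide with the paper's: the orbit stays in $C_1$, so any accumulation point $w$ lies in $C_1$, and Lemma \ref{accum pt half flat} applies and yields a flat half plane bounded by an axis $c$ of $h$ with $w$ as its remaining ideal point. From there, however, your argument has two genuine gaps. First, the assertion that ``every geodesic parallel to an axis is again an axis of the same translation length (flat strip theorem)'' is false: a screw motion of $\E^3$ has a unique axis, every line parallel to that axis is a parallel geodesic, yet none of them is an axis and $\Min$ is just the single line. So you cannot conclude $H_w\subset\Min(h)$, hence cannot place $w$ in $\pa\Min(h)=\Fix(\bar h)$; note also that accumulation points of an orbit need not be fixed points --- the set of accumulation points is merely $\bar h$-invariant. (A smaller inaccuracy: the hypothesis does not put $h^\infty$ in the interior of an arc, only outside the closed arc $[x_i,y_j]$; what you actually need, $x\neq h^{-\infty}$ and $w\neq h^\infty$, still holds, but not for the reason you give.) Second, even granting the reduction, the statement you reduce to --- $\Fix(\bar h)\cap C_1=\{x_1,x_2\}$ --- is never proved: you sketch an approach through the suspension structure of $\pa\Min(h)$ and Theorem \ref{PS}, and you yourself flag its crucial step (producing a point more than $\pi/2$ from \emph{all} of $C_1$, not just from $C_1\cap\Fix(\bar h)$) as an unresolved obstacle. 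As it stands the proposal is an incomplete reduction resting on a false lemma.

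The paper's proof needs neither $w\in\Fix(\bar h)$ nor Theorem \ref{PS}, and its engine is entirely absent from your sketch: uniqueness of Tits geodesics between points at distance less than $\pi$ in the CAT(1) space $\paT X$. Concretely, the half-plane boundary is a Tits path of length $\pi$ running $h^\infty \to x' \to h^{-\infty}$; since $h^{-\infty}$ lies in the open arc between $x_i$ and $y_j$ while $x'\notin[x_i,y_j]$, this path must cross $x_i$ or $y_j$, say $x_i$. Its subpath from $h^\infty$ to $x_i$ is then a geodesic of length $\pi-\dT(x_i,h^{-\infty})=\dT(h^\infty,x_i)<\pi$, and the arc of the circle $\pa F_0$ from $h^\infty$ to $x_i$ avoiding $h^{-\infty}$ is a second geodesic of the same length, distinct from the first because $x'\notin\pa F_0$. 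Two distinct geodesics between points less than $\pi$ apart is the contradiction; the argument closes in three lines once the separation property of the join (any path from $x'$ to a point inside the arc must pass through an endpoint of the arc) is combined with geodesic uniqueness. If you want to salvage your reduction, observe that any point of $\Fix(\bar h)\cap C_1$ also lies on a Tits path of length $\pi$ joining $h^{\pm\infty}$ (a meridian of the suspension), so proving $\Fix(\bar h)\cap C_1=\{x_1,x_2\}$ requires exactly this same uniqueness argument --- at which point the detour through $\Fix(\bar h)$ buys nothing.
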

\begin{proof}
Suppose not, then the sequence has an accumulation point $x' \in
C_1\setminus\{x_1,x_2\}$. By Lemma \ref{accum pt half flat}, $x'$
forms boundary of a half flat plane with $h^{\pm\infty}$. This
boundary goes from $h^\infty$ to $x'$, and then passes through $x_i$
or $y_j$ before ending at $h^{-\infty}$. If it passes through $x_i$,
then the Tits length of segment on this boundary joining $h^\infty$
to $x_i$ is the total length of the half-plane boundary $\pi$ minus
the length of the segment from $x_i$ to $h^{-\infty}$, thus it is
equal to the length of the Tits geodesic segment on $\pa F_0$ joining
these two points, so there are two geodesics for these two points.
But this contradicts the uniqueness of Tits geodesic between two
points less than $\pi$ apart. If the boundary of the half flat plane
goes through $y_j$, apply the same argument to the points $h^\infty$
and $y_j$ and we have the same contradiction. For the case $y\in
C_2\setminus\{y_1,y_2\}$ use the same argument.
\end{proof}
\begin{lemma}\label{not on open arc}
$g^{-\infty}$ cannot be in the open segment from $y$ to $z$.
\end{lemma}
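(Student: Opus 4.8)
The plan is to complete the case analysis begun in the previous two lemmas: since the preceding lemma rules out $g^{-\infty}$ on the closed sub-arc of $\beta$ from $g^\infty$ to $y$, excluding it from the open arc from $y$ to $z$ leaves no admissible position for $g^{-\infty}$, and hence no hyperbolic $g$ with exactly one endpoint in $C_1$. As in the earlier arguments, the target is a point $p\in\paT X$ with $\dT(p,C_1)>\tfrac\pi2$ or $\dT(p,C_2)>\tfrac\pi2$, which contradicts Theorem \ref{PS} applied to the closed $G$-invariant sets $C_1,C_2$. I would set up coordinates on the circle $\pa F$ by Tits arclength, with $g^\infty$ at $0$, $y$ at $a$, $g^{-\infty}$ at $\pi$ (its $\pa F$-antipode), $z$ at $c$ and $y'$ at $d$, where $0<a<\pi<c<d<2\pi$ and, by the lemma that $\pa F$ meets each $C_i$ in exactly two points, $\{g^\infty,z\}=\pa F\cap C_1$ and $\{y,y'\}=\pa F\cap C_2$.

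The key structural step is to pin down where $C_1$ and $C_2$ can sit, using Lemma \ref{conv of forward seq}. Applied to $g$ and $F$ it gives that $\bar g^n\cdot x$ accumulates only at $g^\infty$ or $z$ when $x\in C_1$, and only at $y$ or $y'$ when $x\in C_2$. Feeding each limit point into Lemma \ref{accum pt half flat} and using that $z,y,y'$ are all different from $g^\infty$, the \emph{metric} (not merely angular) conclusion applies, so $\dT(g^{-\infty},x)=\dT(g^{-\infty},z)=c-\pi$ for every $x\in C_1$ whose forward orbit limits on $z$, while $\dT(g^{-\infty},x)\in\{\pi-a,\,d-\pi\}$ for every $x\in C_2$; the remaining points of $C_1$ limit on $g^\infty$ and only satisfy $\angle(g^{-\infty},x)=\pi$, hence $\dT(g^{-\infty},x)\ge\pi$. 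Thus $C_1$ lies on the union of the metric sphere $\{\dT(g^{-\infty},\cdot)=c-\pi\}$ and $\{\dT(g^{-\infty},\cdot)\ge\pi\}$, while $C_2$ lies on the two metric spheres of radii $\pi-a$ and $d-\pi$ about $g^{-\infty}$. In particular $\dT(g^{-\infty},C_1)=c-\pi$ and $\dT(g^{-\infty},C_2)=\min(\pi-a,d-\pi)$.

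Two ranges are then immediate. If $c>\tfrac{3\pi}2$ then $\dT(g^{-\infty},C_1)=c-\pi>\tfrac\pi2$, so $p=g^{-\infty}$ contradicts Theorem \ref{PS}; if $a<\tfrac\pi2$ and $d>\tfrac{3\pi}2$ then $\dT(g^{-\infty},C_2)>\tfrac\pi2$ and again $p=g^{-\infty}$ works. The remaining, tight configuration is $c\le\tfrac{3\pi}2$ together with $a\ge\tfrac\pi2$ or $d\le\tfrac{3\pi}2$. To treat it I would bring in the second flat direction already present in the proof of Lemma \ref{contains F}: the splitting $\Min(g)=Y\times\R$ and Swenson's theorem furnish a hyperbolic $h\in G$ commuting with $g$, with axis in $F$ along the $Y$-factor and endpoints $h^{\pm\infty}$ the two equatorial directions of $\pa F$, at arclength $\tfrac\pi2$ and $\tfrac{3\pi}2$. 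Since $h$ shares the flat $F$ (so the same four Cantor points) and its endpoints are generically interior to arcs, Lemmas \ref{conv of forward seq} and \ref{accum pt half flat} apply to $h$ as well and yield further exact Tits-sphere constraints on $C_1$ and $C_2$ about $h^{\pm\infty}$. Combining the sphere data about $g^{-\infty}$ with that about $h^{\pm\infty}$ (and, if needed, about the endpoints of the commuting hyperbolics $g^ih^j$, whose axes sweep the intermediate directions of $F$) would force a boundary direction that is simultaneously more than $\tfrac\pi2$ from every sphere carrying $C_1$, or from every sphere carrying $C_2$, producing the required $p$.

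The main obstacle is exactly this tight range. There $z$, and one of $y,y'$, lies within $\tfrac\pi2$ of $g^{-\infty}$, so the two families of spheres crowd around $g^{-\infty}$; moreover $g^\infty\in C_1$ sits at distance exactly $\tfrac\pi2$ from both equatorial directions and $0$ from itself, so no \emph{vertex} of $\pa F$ — neither pole nor equator point — is strictly beyond $\tfrac\pi2$ from $C_1$ or $C_2$, and the cut-point estimate used in the previous lemma degenerates. The real work is therefore to show that the exact metric-sphere constraints coming from the whole pencil of flats through the axis of $g$ cannot be satisfied simultaneously by uncountable $C_1$ and $C_2$ while every boundary point stays within $\tfrac\pi2$ of one of them; in particular one must rule out the borderline equalities (Tits distance exactly $\tfrac\pi2$) persisting for all the auxiliary elements at once. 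I expect verifying this to be the crux of the argument.
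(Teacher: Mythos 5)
Your proposal does not close the argument: after the two easy ranges ($c>\frac{3\pi}{2}$, or $a<\frac{\pi}{2}$ together with $d>\frac{3\pi}{2}$) you arrive at a ``tight configuration'' and only sketch a hope that metric-sphere constraints coming from a pencil of commuting hyperbolics will force a contradiction, explicitly flagging that step as unverified. That is precisely the case that needs proof, so there is a genuine gap. The gap appears because you extract metric information only at the special point $g^{-\infty}$ (via the exact-distance clause of Lemma \ref{accum pt half flat} applied to the forward iterates $\bar g^{n}$), which merely pins $C_1$ and $C_2$ to a few metric spheres centered at $g^{-\infty}$; when $z$ and one of $y,y'$ lie within $\pi/2$ of $g^{-\infty}$, that data is perfectly consistent with Theorem \ref{PS}, so no contradiction can be read off at that point. (A secondary problem: the auxiliary element $h$ with endpoints exactly at the equatorial directions of $\pa F$ is not supplied by the proof of Lemma \ref{contains F}; the commuting hyperbolic produced there may translate along both factors of $\Min(g)=Y\times\R$, so its endpoints need not be the poles of the circle.)

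The paper avoids any case analysis by moving the test point instead of keeping it at $g^{-\infty}$. Since $\beta\subset\pa F\subset\Fix(\bar g)$ and $\beta$ has Tits length $\pi+\dT(g^{-\infty},z)>\pi$, one can choose $w\in\beta$, fixed by $\bar g$, with $\dT(w,g^\infty)>\pi/2$ and $\dT(w,z)>\pi/2$. Applying Lemmas \ref{conv of forward seq} and \ref{accum pt half flat} to the backward iterates shows $\bar g^{-n}\cdot z'\to z$ for every $z'\in C_1\setminus\{g^\infty,z\}$ (accumulation at $g^\infty$ is excluded because the angle to the repelling point is preserved). Lower semicontinuity of the Tits metric then gives
\begin{align*}
\dT(w,z') \;=\; \lim_{n\to\infty}\dT\bigl(\bar g^{-n}\cdot w,\,\bar g^{-n}\cdot z'\bigr) \;\geq\; \dT(w,z) \;>\; \pi/2,
\end{align*}
so $\dT(w,C_1)>\pi/2$, contradicting Theorem \ref{PS} with no residual cases. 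The idea you are missing is exactly this: use the group to push all of $C_1\setminus\{g^\infty,z\}$ onto $z$ while the test point $w$ stays fixed, and let semicontinuity transfer the single bound $\dT(w,z)>\pi/2$ to every point of $C_1$, rather than trying to accumulate enough exact sphere constraints at $g^{-\infty}$ and other special points.
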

\begin{proof}
Suppose not. For any point $z'\in C_1$ other than $g^\infty$ and
$z$, the sequence $\bar g^{-n}\cdot z'$ converges to $z$ by Lemma
\ref{conv of forward seq} and Lemma \ref{accum pt half flat} which
says that $\bar g^{-n}\cdot z'$ cannot accumulate at $g^\infty$.

The segment $\beta$ has Tits length larger than $\pi$, so there is a
point $w\in \beta$ which is more than $\pi/2$ away from $g^\infty$
and from $z$.

By lower semi-continuity of the Tits metric,
\begin{align*}
\dT(w,z') & =\lim_{n\to\infty}\dT(\bar g^{-n}\cdot w, \bar
g^{-n}\cdot z') \\
& \geq \dT(\lim_{n\to\infty} \bar g^{-n}\cdot w,\lim_{n\to\infty}
\bar g^{-n}\cdot z') = \dT(w,z).
\end{align*}

So $\dT(w,C_1) > \pi/2$, a contradiction to Theorem
\ref{PS}.
\end{proof}
\begin{figure}[htb]
  \centering
  \def\svgwidth{350pt}
  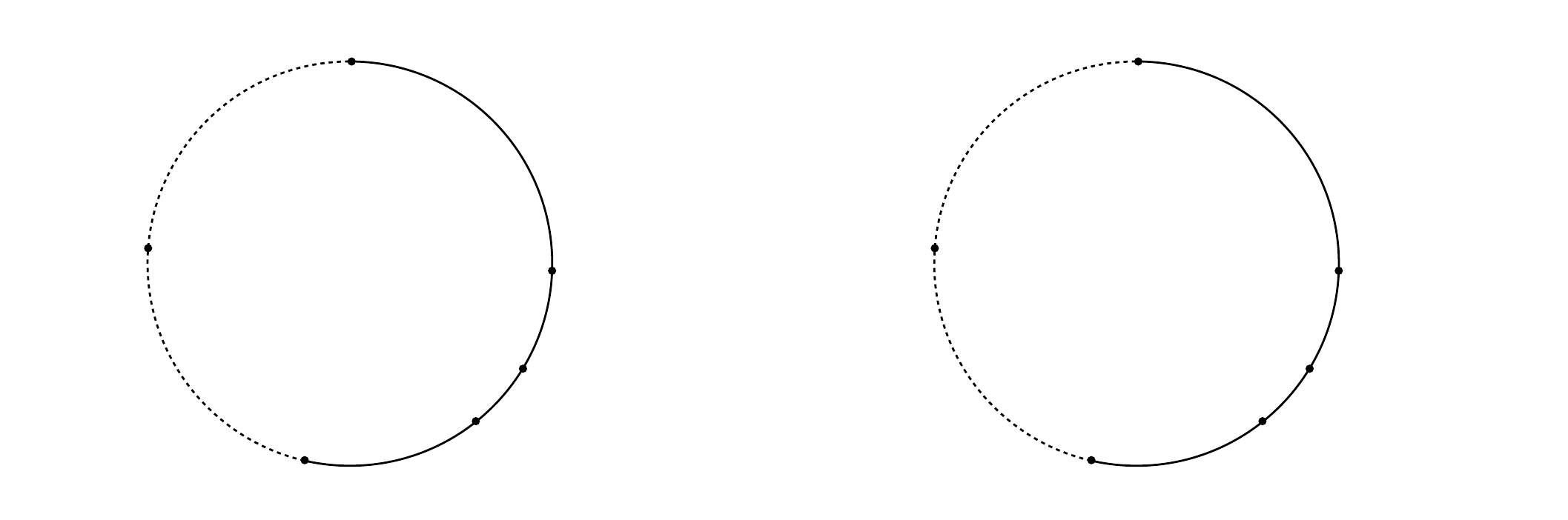
  \caption{$\pa F$ in Lemma \ref{not on open arc}}
\end{figure}

We see from these lemmas that the endpoints of a hyperbolic
element must be both in $C_1$, or both in $C_2$, or none is in $C_1
\union C_2$.

If $g$ is a hyperbolic element of $G$ with endpoints not in $C_1
\union C_2$, we have the following results.

\begin{lemma}\label{boundary is F}
$\pa\Min(g)$ is the boundary of a 2-flat.
\end{lemma}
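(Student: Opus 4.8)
The plan is to show that the topological suspension $\Fix(\bar g)=\pa\Min(g)$ collapses onto a single circle, namely the boundary of the $2$-flat produced by Lemma \ref{contains F}. First I would produce that flat: the proof of Lemma \ref{contains F} uses only that $g$ is not rank one (Lemma \ref{no rank 1}) together with the splitting $\Min(g)=Y\times\R$, and neither ingredient requires $g^\infty\in C_1$, so the same argument applies verbatim and yields a $2$-flat $F\subset\Min(g)$. Since $\pa F\subset\pa\Min(g)=\Fix(\bar g)$, the lemma asserting that the boundary of any $2$-flat in $\Min(g)$ meets each $C_i$ in exactly two points shows that $\pa F$ is a round circle meeting $C_1$ in two points $x_1,x_2$ and $C_2$ in two points $y_1,y_2$; these alternate around the circle as $x_1,y_1,x_2,y_2$, because consecutive marked points must lie in different $C_i$ (an arc joins $C_1$ to $C_2$). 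As $g^{\pm\infty}\notin C_1\union C_2$, the two poles of the suspension lie in the interiors of arcs of $\pa F$, and being the endpoints of the axis of $g$ (a line contained in $F$) they are antipodal on $\pa F$. Because $\pa F\subset\Fix(\bar g)$ already, it remains only to prove $\Fix(\bar g)\subset\pa F$.

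The key step is to compute $\Fix(\bar g)\cap(C_1\union C_2)$, which I claim equals $\{x_1,x_2,y_1,y_2\}$. Assuming $g^\infty$ and $g^{-\infty}$ lie in the interiors of \emph{distinct} arcs of $\pa F$, I apply Lemma \ref{conv of forward seq} to $h=g^{-1}$: then $h^{-\infty}=g^\infty$ sits in an open arc between some $x_i,y_j$ while $h^\infty=g^{-\infty}$ lies outside the corresponding closed arc, so for every $z\in C_1\setminus\{x_1,x_2\}$ the sequence $\bar g^{-n}\cdot z$ can accumulate only at $x_1$ or $x_2$. A point fixed by $\bar g$ has constant orbit, so no $z\in C_1\setminus\{x_1,x_2\}$ can be fixed; the same reasoning handles $C_2$.

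This is where I expect the real difficulty. To apply Lemma \ref{conv of forward seq} I must know that the two antipodal poles do not lie in the \emph{same} arc of $\pa F$, equivalently that every arc of $\pa F$ has Tits length strictly less than $\pi$. I would try to establish this from the fact that each marked point is at Tits distance $\pi/2$ from its two neighbours (so that $\pa F$ is the spherical join $S^0*S^0$ and each arc has length $\pi/2$); failing a clean proof of that, I would argue directly in the spirit of Lemma \ref{not on open arc}, showing that if some arc had length $\ge\pi$ then its midpoint would be more than $\pi/2$ from the $G$-invariant set $C_1$ (or $C_2$), contradicting Theorem \ref{PS}. The separation of the two poles into distinct arcs is the main obstacle of the whole argument.

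Finally I would close using the join structure. Let $p\in\Fix(\bar g)$. If $p\in C_1\union C_2$ then $p\in\{x_1,x_2,y_1,y_2\}\subset\pa F$ by the previous step. Otherwise $p$ lies in the interior of a unique arc $[a,b]$ with $a\in C_1$, $b\in C_2$; since interiors of distinct arcs of the join are disjoint and $\bar g$ fixes $p$, the image arc $[\bar g\cdot a,\bar g\cdot b]$ shares the interior point $p$ with $[a,b]$, so $\bar g$ stabilizes $[a,b]$, and as it preserves $C_1$ and $C_2$ it fixes both $a$ and $b$. Hence $a\in\Fix(\bar g)\cap C_1=\{x_1,x_2\}$ and $b\in\Fix(\bar g)\cap C_2=\{y_1,y_2\}$, so $p$ lies on one of the four arcs $[x_i,y_j]$. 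The union of these four arcs is exactly $\pa F$, giving $\Fix(\bar g)\subset\pa F$. Combined with $\pa F\subset\Fix(\bar g)$ this yields $\pa\Min(g)=\Fix(\bar g)=\pa F$, the boundary of the $2$-flat $F$.
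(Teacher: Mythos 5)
Your argument is correct in outline, but it is a genuinely different route from the paper's. The paper argues top--down from the suspension structure: $\pa\Min(g)=\Fix(\bar g)$ is a suspension with poles $g^{\pm\infty}$, and since each pole lies in the interior of an arc of the join, only two disjoint strands can emanate from a pole, so this suspension can only be a circle or a degenerate set; because $\bar g$ fixes $g^\infty$ and preserves $C_1,C_2$, it fixes the entire arc through $g^\infty$, which rules out the degenerate cases, so $\pa\Min(g)$ is a circle; the flat produced by the argument of Lemma \ref{contains F} then has boundary a circle contained in this circle, hence equal to it. You instead argue bottom--up: build $F$ first, pin down $\Fix(\bar g)\cap(C_1\union C_2)=\{x_1,x_2,y_1,y_2\}$ by the dynamics of Lemma \ref{conv of forward seq}, and then use the join structure to force every other fixed point onto one of the four arcs $[x_i,y_j]$, whose union is $\pa F$. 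Both routes work; the paper's is much shorter and needs no Tits-metric estimates at all, while yours extracts more explicit information (it identifies the fixed points in $C_1\union C_2$ directly), at the price of the separation problem you flagged.

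Concerning that flagged step: your fallback does close it, but the strictness of the inequality matters, and your phrasing ``if some arc had length $\ge\pi$'' is not quite right. An arc of length exactly $\pi$ only yields a point at Tits distance exactly $\pi/2$ from $C_1$, which does \emph{not} contradict Theorem \ref{PS}. What saves you is antipodality: if both poles lay in the same open arc, then since they are at Tits distance $\pi$ along $\pa F$ and are interior to the arc, that arc would have length \emph{strictly} greater than $\pi$. One may then choose $w$ on the arc whose distance along the circle to each endpoint lies strictly between $\pi/2$ and $3\pi/2$, so that $\dT(w,x_1)$ and $\dT(w,y_1)$ both exceed $\pi/2$ by a definite margin; since any Tits path from $w$ to a point of $C_1$ is cone-continuous and must exit the arc through $x_1$ or $y_1$, this gives $\dT(w,C_1)>\pi/2$, contradicting Theorem \ref{PS} exactly as in Lemma \ref{not on open arc}. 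Note also that your preferred first option --- showing each arc of $\pa F$ has length $\pi/2$ --- would be circular: that statement is essentially Lemma \ref{pi/2 apart}, which is proved afterwards and takes the present lemma as input.
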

\begin{proof}
Since $\pa \Min(g)$ is a suspension, so it can only be a circle or a
set of two points. However, as $g$ acts on $\paT X$ by isometry, we
see that $g$ must fix the arc on which $g^\infty$ lies. So $\pa
\Min(g) = \Fix(\bar g)$ can only be a circle. Then by the same
reason as in Lemma \ref{contains F} $\Min(g)$ contains a 2-flat,
whose boundary is the circle.
\end{proof}

Suppose for convenience that $g^\infty$ is on the open arc from
$x_1\in C_1$ to $y_1\in C_2$, and $x_2\in C_1$, $y_2\in C_2$ are the
two other points on the boundary $\pa F$.

\begin{lemma}
For $g$ as above, $g^{-\infty}$ can only be on the open arc from
$x_2$ to $y_2$.
\end{lemma}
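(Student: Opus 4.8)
The plan is to exploit two facts about the circle $\pa F$. First, $g^\infty$ and $g^{-\infty}$ are antipodal on it: they are the endpoints of a straight line (an axis of $g$) in the Euclidean plane $F$, so each of the two arcs of $\pa F$ joining them has Tits length exactly $\pi$. Second, the two points of $C_1$ on $\pa F$, namely $x_1$ and $x_2$, are themselves antipodal, and likewise $y_1$ and $y_2$. Granting the second fact, the conclusion is immediate: ordering the four points cyclically as $x_1,y_1,x_2,y_2$, antipodality makes $x_2$ the antipode of $x_1$ and $y_2$ the antipode of $y_1$, so the antipodal image of the open arc $(x_1,y_1)$ is precisely the open arc $(x_2,y_2)$; since $g^\infty$ lies on $(x_1,y_1)$ its antipode $g^{-\infty}$ lies on $(x_2,y_2)$. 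Thus the real task is to prove $\dT(x_1,x_2)=\pi$ and $\dT(y_1,y_2)=\pi$.

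I would prove these by contradiction against Theorem \ref{PS}. Suppose $\dT(x_1,x_2)=d<\pi$. Then $x_1,x_2$ lie in a common open half of $\pa F$, and taking $w\in\pa F$ to be the antipode of the midpoint of the short arc between them gives $\dT(w,x_1)=\dT(w,x_2)=\pi-\frac{d}{2}>\frac{\pi}{2}$. Since $w\in\pa F=\Fix(\bar g)$, it is fixed by $\bar g$. I then claim $\dT(w,x')\geq\pi-\frac{d}{2}$ for every $x'\in C_1$: this is clear when $x'\in\{x_1,x_2\}$, and otherwise every accumulation point of $(\bar g^n\cdot x')$ lies in $\{x_1,x_2\}$ (the crux, discussed below), so passing to a subsequence $\bar g^{n_k}\cdot x'\to x_i$ and using $\bar g^{n_k}\cdot w=w$ together with lower semicontinuity of $\dT$ yields $\dT(w,x_i)\leq\liminf_k\dT(w,\bar g^{n_k}\cdot x')=\dT(w,x')$. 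Hence $\dT(w,C_1)\geq\pi-\frac{d}{2}>\frac{\pi}{2}$, contradicting Theorem \ref{PS} for the closed $G$-invariant set $C_1$. So $\dT(x_1,x_2)=\pi$, and symmetrically $\dT(y_1,y_2)=\pi$.

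The step I expect to be the main obstacle is the accumulation claim: for $x'\in C_1\setminus\{x_1,x_2\}$, every cone-topology accumulation point of $(\bar g^n\cdot x')$ lies in $\{x_1,x_2\}$. By Lemma \ref{boundary is F} we have $\Min(g)=Y\times\R$ with $\pa\Min(g)=\pa F$ a circle, so the spherical suspension $\pa Y\ast S^0$ is a circle and $\pa Y$ consists of exactly two points. Any accumulation point $w$ of $(\bar g^n\cdot x')$ lies in the closed invariant set $C_1$, hence $w\notin\{g^\infty,g^{-\infty}\}$, so by Lemma \ref{accum pt half flat} the axis $c$ and a ray from $c(0)$ to $w$ span a flat half-plane. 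This half-plane is bounded by $c$, and since every line in it parallel to $c$ is again an axis of $g$, it lies in $\Min(g)=Y\times\R$; thus it has the form $\gamma\times\R$ for a geodesic ray $\gamma$ in $Y$, and its boundary is the semicircle of $\pa F$ determined by $\gamma(\infty)\in\pa Y$. As $\pa Y$ has only two points, $w\in\pa F\cap C_1=\{x_1,x_2\}$, and the same argument with $C_2$ gives the statement for $y'\in C_2$. The delicate point is exactly this confinement: one must check that the half-plane furnished by Lemma \ref{accum pt half flat} really sits inside $\Min(g)$, so that it closes up on $\pa F$ and the accumulation is trapped at the two Cantor points on the flat, making the semicontinuity estimate uniform. (Where the position of $g^{-\infty}$ already makes its hypotheses available, this accumulation is also supplied by Lemma \ref{conv of forward seq}.)

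With $\dT(x_1,x_2)=\dT(y_1,y_2)=\pi$ established, the antipodal reduction of the first paragraph finishes the proof and simultaneously rules out every other position of $g^{-\infty}$: placing $g^{-\infty}$ on $(x_1,y_1)$ would force that arc to have length greater than $\pi$ and hence $\dT(x_1,x_2)<\pi$, while placing it on $(y_1,x_2)$ or on $(y_2,x_1)$ would force $\dT(x_1,x_2)<\pi$ or $\dT(y_1,y_2)<\pi$ respectively. Each contradicts the antipodality just proved, leaving only the open arc $(x_2,y_2)$.
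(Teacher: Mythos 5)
Your overall architecture (reduce the lemma to antipodality of $x_1,x_2$ and of $y_1,y_2$ on $\pa F$, then prove antipodality by contradiction against Theorem \ref{PS} via a semicontinuity estimate) is sound, and the estimate itself is the same mechanism the paper uses. But the step you yourself flag as the crux is where the proof breaks: the assertion that the flat half-plane produced by Lemma \ref{accum pt half flat} lies in $\Min(g)$ ``since every line in it parallel to $c$ is again an axis of $g$.'' That principle is false. A line parallel to an axis of $g$ need not be an axis, nor even $g$-invariant: take $X=T\times\R$ with $T$ a tree and $g=h\times(\text{unit translation})$, where $h$ is elliptic with $\Fix(h)=\{p\}$. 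Then $\Min(g)$ is the single line $\{p\}\times\R$, yet every ray $\gamma$ from $p$ gives a flat half-plane $\gamma\times\R$ bounded by that axis and lying entirely outside $\Min(g)$; moreover, for a boundary point $z$ with tree-direction $\xi\in\pa T$, the accumulation points of $\bar g^n\cdot z$ have tree-directions in the $\bar h$-orbit closure of $\xi$ and are nowhere near $\pa\Min(g)=\{g^{\pm\infty}\}$. So half-planes furnished by Lemma \ref{accum pt half flat} do not in general close up on $\pa\Min(g)$, and your confinement of accumulation points to $\pa F\cap C_1=\{x_1,x_2\}$ is unjustified. This is exactly the difficulty that the paper's Lemma \ref{conv of forward seq} is built to overcome: there the confinement is proved not by placing the half-plane inside $\Min(g)$, but by combining the join structure of $\pa X$ (a path reaching the open arc containing $g^{-\infty}$ must enter it through one of its endpoints) with uniqueness of Tits geodesics between points at distance less than $\pi$ --- and that argument requires the positional hypothesis that $g^\infty$ not lie on the closed arc containing $g^{-\infty}$.

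Your parenthetical fallback --- cite Lemma \ref{conv of forward seq} ``where the position of $g^{-\infty}$ already makes its hypotheses available'' --- does not close the gap, because your antipodality argument is meant to run before anything is known about where $g^{-\infty}$ sits. In the one configuration where the lemma's hypotheses fail, namely $g^{-\infty}$ lying on the same open arc $(x_1,y_1)$ as $g^\infty$ (a priori possible at this stage, since the arc lengths are not yet known and that arc could have Tits length greater than $\pi$), you are left with no argument for the confinement at all. That configuration must be excluded separately; for instance, in it the semicircle of $\pa F$ between $g^{\pm\infty}$ lies entirely inside the open arc $(x_1,y_1)$, and its midpoint $m$ satisfies $\dT(m,C_1)>\pi/2$, because any Tits path from $m$ to $C_1$ must exit the arc through $x_1$ or $y_1$ at length cost exceeding $\pi/2$ --- contradicting Theorem \ref{PS}. (For what it is worth, the paper's own ``without loss of generality,'' which reduces to $g^{-\infty}\in(y_1,x_2)$ and then invokes Lemma \ref{conv of forward seq}, is likewise silent about this same-arc configuration, so you are in good company; but a complete proof along your lines needs both that exclusion and the appeal to Lemma \ref{conv of forward seq} in the remaining cases, at which point the paper's shorter case-by-case contradiction is available anyway.)
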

\begin{proof}
Suppose $g^{-\infty}$ were not on this arc. Without loss of
generality let $g^{-\infty}$  be on the arc joining $y_1$ and $x_2$.
Now the segment from $x_1$ to $x_2$ through $y_1$ has Tits length
larger than $\pi$, so we can choose a point $p$ on this segment so
that $p$ is at distance more than $\pi/2$ away from $x_1$ and $x_2$.
By Lemma \ref{conv of forward seq}, for any other point $x'\in C_1$,
$\bar g^n\cdot x'$ cannot have an accumulation point other than
$x_1$ and $x_2$. Passing to a subsequence $\bar g^{n_k}\cdot x' \to
x_i$, $i=1$ or $2$, we have
\begin{align*}
\dT(p, x') & = \lim_{n_k\to\infty} \dT(\bar g^{n_k}\cdot p,\bar
g^{n_k}\cdot x') \\
& \geq \dT(\lim_{n_k\to\infty} \bar g^{n_k}\cdot
p,\lim_{n_k\to\infty} \bar g^{n_k}\cdot x')= \dT(p, x_i),
\end{align*} then $\dT(p, C_1)
> \pi /2$, contradicting Theorem \ref{PS}.
\end{proof}

\section{Main result}
Now we add the assumption that $G$ contains a subgroup isomorphic to $\Z^2$, then the Flat Torus Theorem (\cite{BH}, Theorem II.7.1) implies that there exists two commuting hyperbolic elements $g_1, g_2\in G$, such that $\Min(g_1)$, formed by the
axes of $g_1$, contains axes of $g_2$ not parallel to those of $g_1$. Then an axis of $g_1$ and an axis of $g_2$ span a 2-flat in $\Min(g_1)$, and  elements $g_1^n g_2^m$ are also
hyperbolic and have axes in this 2-flat with endpoints dense on the boundary of this 2-flat. So we can choose some
hyperbolic element $g$ so that its endpoints are not in $C_1 \union
C_2$.

We start with a lemma about the orbits of the group action, then we will prove Theorem \ref{mainthm1}.
\begin{lemma}\label{inf orbit}
For any two distinct points $w_1,w_2 \in \pa X$, there exists a sequence $(g_i)_{i=0}^\infty \subset G$ such that the points $\bar g_i \cdot w_j$, where  $0\leq i < \infty$ and $j\in\{1,2\}$, are distinct.
\end{lemma}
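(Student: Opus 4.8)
The plan is to construct the sequence $(g_i)$ one term at a time, reducing the whole problem to a statement about the index of point stabilizers. First I would record that every orbit $G\cdot w$ in $\pa X$ is infinite: if $G\cdot w$ were finite it would be a finite $G$-invariant subset of $\pa X$, so Lemma \ref{no finite orbit} would force $\pa X$ to be a suspension, contradicting that $\pa X$ is the join of two Cantor sets. Equivalently, the stabilizer $\mathrm{Stab}(w_j) := \{g\in G : \bar g\cdot w_j = w_j\}$ has infinite index in $G$ for $j=1,2$.

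Next I would set up the induction. Take $g_0 = \id$, so that $\bar g_0\cdot w_1 = w_1$ and $\bar g_0\cdot w_2 = w_2$ are distinct by hypothesis. Suppose $g_0,\ldots,g_{k-1}$ have been chosen so that the finite set $S = \{\bar g_i\cdot w_j : 0\le i\le k-1,\ j\in\{1,2\}\}$ consists of pairwise distinct points. Since each $\bar g_k$ is a homeomorphism of $\pa X$, we automatically have $\bar g_k\cdot w_1 \neq \bar g_k\cdot w_2$, so it suffices to find $g_k\in G$ with $\bar g_k\cdot w_1\notin S$ and $\bar g_k\cdot w_2\notin S$. For a fixed $s\in S$ and fixed $j$, the set $\{g : \bar g\cdot w_j = s\}$ is either empty or a single left coset of $\mathrm{Stab}(w_j)$; hence the set of ``bad'' elements $g$ (those with $\bar g\cdot w_1\in S$ or $\bar g\cdot w_2\in S$) is contained in a union of at most $2\abs{S}$ left cosets, some of $\mathrm{Stab}(w_1)$ and some of $\mathrm{Stab}(w_2)$.

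The decisive step is to show that this finite union of cosets cannot exhaust $G$. Here I would invoke B.\,H.\ Neumann's covering theorem: if a group is the union of finitely many left cosets of subgroups, then at least one of those subgroups has finite index. The only subgroups appearing here are $\mathrm{Stab}(w_1)$ and $\mathrm{Stab}(w_2)$, both of infinite index by the first step, so $G$ is strictly larger than the union of bad cosets. Therefore a suitable $g_k$ exists, the induction continues, and the resulting sequence $(g_i)_{i=0}^\infty$ has the property that all the points $\bar g_i\cdot w_j$ are distinct.

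I expect the main point to be conceptual rather than computational: one must observe that avoiding a finite forbidden set of image points is precisely the complement of a finite union of stabilizer cosets, and then that infinite-index stabilizers obstruct such a cover via Neumann's lemma. The only geometric input is the infinitude of orbits, which comes entirely from Lemma \ref{no finite orbit} together with the fact that $\pa X$ is not a suspension.
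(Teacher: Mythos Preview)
Your argument is correct, and it shares the same skeleton as the paper's proof: both establish that every $G$-orbit in $\pa X$ is infinite via Lemma~\ref{no finite orbit}, start with $g_0=\id$, and build the sequence inductively by finding a $g_{k}$ whose images of $w_1,w_2$ avoid the finite set $S$ accumulated so far.

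The difference lies in how the inductive step is carried out. You package the obstruction as a finite union of left cosets of $\mathrm{Stab}(w_1)$ and $\mathrm{Stab}(w_2)$ and dispatch it with B.\,H.\ Neumann's covering lemma. The paper instead argues by hand: it first takes a sequence $(h_i)$ with $\bar h_i\cdot w_1$ all distinct, discards the finitely many landing in $S$, and then, if every remaining $\bar h_i\cdot w_2$ lies in $S$, uses pigeonhole to find infinitely many with a common value $s\in S$ and left-multiplies by some $h'$ moving $s$ off $S$. Your route is shorter and more conceptual, and it generalizes immediately to any finite set $\{w_1,\dots,w_n\}$ (a point the paper makes in a remark); the paper's route is entirely elementary and avoids quoting an external group-theoretic result.
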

\begin{proof}
From Lemma \ref{no finite orbit} we know that every $w\in \pa X$ has an infinite orbit $G \cdot w$. So let $(h_i)_{i=0}^\infty \subset G$ be a sequence such that $\bar h_i \cdot w_1$ are distinct. We will construct the sequence $(g_i)$ inductively. First set $g_0 = e$.

Suppose that for $n \geq 0$ we have $g_0, \ldots, g_n$ such that $\bar g_i \cdot w_j$, where $0\leq i \leq n$, $j\in\{1,2\}$, are distinct.  Let $S_n :=  \{\bar g_m \cdot w_1, \bar g_m \cdot w_2: 0\leq m \leq n \}$ . Pass to a subsequence of  $(h_i)$ so that $\bar h_i \cdot w_1 \notin S_n$. (We will keep denoting any subsequence by $(h_i)$.) If there exists some $h_j$  such that $\bar h_j \cdot w_2 \notin S_n$, then set $g_{n+1} = h_j$. Otherwise, there exists some $\bar g_m \cdot w_k \in S_n$ such that $\bar h_i \cdot w_2 = \bar g_m \cdot w_k$ for infinitely many $h_i$. Pass to this subsequence. Since the orbit of $\bar g_m \cdot w_k$ is infinite, there exists $h' \in G$ such that $\bar {h'} \cdot (\bar g_m \cdot w_k) \notin S_n$, so $\bar {h' h_i} \cdot w_2 \notin S_n$. Now  $\bar {h' h_i} \cdot w_1 \notin S_n$ for infinitely many $h_i$. Set $g_{n+1} = h' h_i$ for one such $h_i$. Hence we get the desired sequence $(g_i)$.
\end{proof}
\begin{remark}
The only condition required on the group action is that every orbit is infinite. This proof can be used to show a similar result for any finite set $\{w_1, \ldots w_n\}$.
\end{remark}
\begin{lemma}\label{pi/2 apart}
For any $x\in C_1$, $y\in C_2$ we have $\dT(x,y) = \pi / 2$. Hence
$\paT X$ is metrically a spherical join of $C_1$ and $C_2$.
\end{lemma}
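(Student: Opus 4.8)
The plan is to turn the fixed hyperbolic element $g$ (chosen with $g^{\pm\infty}\notin C_1\union C_2$) into a device that collapses the two Cantor sets onto the four marked points of its flat. By Lemma \ref{boundary is F}, $\bar g$ fixes the round circle $\pa F=\Fix(\bar g)=\pa\Min(g)$ pointwise, and by the intersection lemma of Section 3 one has $\pa F\cap C_1=\{x_1,x_2\}$ and $\pa F\cap C_2=\{y_1,y_2\}$. First I would prove a \emph{collapsing identity}: for every $p\in\pa F$ and every $w\in C_1$, $\dT(p,w)\ge\dT(p,\{x_1,x_2\})$. Indeed, Lemma \ref{conv of forward seq} gives a subsequence with $\bar g^{n_k}w\to x_i$ in the cone topology; since $\bar g$ is a Tits isometry fixing $p$, we have $\dT(p,\bar g^{n_k}w)=\dT(\bar g^{-n_k}p,w)=\dT(p,w)$, so lower semicontinuity of $\dT$ forces $\dT(p,x_i)\le\dT(p,w)$. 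Hence $\dT(p,C_1)=\dT(p,\{x_1,x_2\})$ for all $p\in\pa F$, and symmetrically $\dT(p,C_2)=\dT(p,\{y_1,y_2\})$.

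Next I would locate the marked points. On the circle $\pa F$ of circumference $2\pi$ the map $p\mapsto\dT(p,\{x_1,x_2\})$ is the distance to the nearer of $x_1,x_2$, whose maximum over $\pa F$ equals half the length of the longer of the two arcs that $x_1,x_2$ cut off. Applying Theorem \ref{PS} to the closed invariant set $C_1$ bounds this maximum by $\pi/2$, so the two arcs are equal and $x_1,x_2$ are antipodal; likewise $y_1,y_2$ are antipodal, using $C_2$. Setting $\alpha=\dT(x_1,y_1)$, antipodality then yields $\dT(x_2,y_2)=\alpha$ and $\dT(x_1,y_2)=\dT(x_2,y_1)=\pi-\alpha$.

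For an arbitrary pair $x\in C_1$, $y\in C_2$ I would push both by $\bar g$: along a common subsequence $\bar g^{n_k}x\to x_i$ and $\bar g^{n_k}y\to y_j$ (Lemma \ref{conv of forward seq}), and lower semicontinuity gives $\dT(x,y)\ge\dT(x_i,y_j)\ge\min(\alpha,\pi-\alpha)$. Thus every cross distance is at least the smaller flat value. Since the flat already realises \emph{both} $\alpha$ and $\pi-\alpha$, the whole statement reduces to the single equality $\alpha=\pi/2$ (orthogonality of the diameters $x_1x_2$ and $y_1y_2$ of $\pa F$) together with the complementary bound $\dT(x,y)\le\pi/2$.

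I expect this last reduction to be the main obstacle, and it is exactly where the flat alone is insufficient: for every $\alpha\in(0,\pi)$ the circle $\pa F$ and the collapsing identities are consistent with Theorem \ref{PS} with room to spare, so orthogonality must be extracted from the ambient Cantor sets rather than from $F$. The route I would attempt is to exploit the elements $g_1^{n}g_2^{m}$, whose endpoints are dense on $\pa F$, to produce a hyperbolic $h$ with both endpoints in $C_1$ (necessarily the antipodal pair $x_1,x_2$); for such $h$ the set $\pa\Min(h)=\Fix(\bar h)$ is a suspension with poles $x_1,x_2$, so any $y\in C_2\cap\pa\Min(h)$ satisfies $\dT(x_1,y)+\dT(y,x_2)=\pi$, which against the lower bound $\dT(\cdot,x_i)\ge\min(\alpha,\pi-\alpha)$ pins $\alpha=\pi/2$ and gives $\dT(x_1,y)=\dT(x_2,y)=\pi/2$. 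The complementary bound $\dT(x,y)\le\pi/2$ for a general pair I would try to obtain by approximating $x$ in the cone topology by marked points $x^{(k)}$ of such flats with $\dT(x^{(k)},y)=\pi/2$ and invoking lower semicontinuity with $y$ held fixed, using Lemma \ref{inf orbit} and Lemma \ref{no finite orbit} to supply enough points; the delicate points are precisely guaranteeing that $y$ meets the relevant $\pa\Min(h)$ and that the approximating cross distances are genuinely $\pi/2$. Once all cross distances equal $\pi/2$, and the collapsing identity forces distinct points of a single $C_i$ to be $\pi$ apart, $\paT X$ is isometric to the spherical join $C_1*C_2$ of two uncountable discrete sets, as claimed.
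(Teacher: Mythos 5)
Your first three steps are correct, and the collapsing identity plus antipodality argument (forcing $x_1,x_2$ and $y_1,y_2$ to be antipodal on $\pa F$ via Theorem \ref{PS}) is a genuinely different and rather clean route to the structure of the marked points; the lower bound $\dT(x,y)\geq\min(\alpha,\pi-\alpha)$ is also fine. But the proof breaks exactly where you flag it, and in three ways. First, the existence of a hyperbolic $h\in G$ with endpoints $x_1,x_2$ is unjustified: the endpoints of the elements $g_1^ng_2^m$ form only the countable set of ``rational'' directions of the lattice spanned by the translations $g_1,g_2$ in $F$, so density gives directions arbitrarily close to $x_1$ but never guarantees hitting it; indeed neither you nor the paper ever establishes that \emph{any} element of $G$ has an endpoint in $C_1\union C_2$ (the paper only shows the endpoint pattern must be both-in-$C_1$, both-in-$C_2$, or neither). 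Second, even granting such an $h$, the pinning is a non sequitur: from $\dT(x_1,y)+\dT(y,x_2)=\pi$ and $\dT(x_i,y)\geq\min(\alpha,\pi-\alpha)$ nothing forces $\alpha=\pi/2$; for example $\alpha=\pi/3$ with $\dT(x_1,y)=\pi/3$ and $\dT(x_2,y)=2\pi/3$ satisfies every constraint you have derived. Third, the approximation argument for $\dT(x,y)\leq\pi/2$ presupposes marked points $x^{(k)}\to x$ whose cross distances to $y$ are already known to be $\pi/2$, which is the statement being proved, so as it stands it is circular.

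The paper closes both gaps with ingredients your proposal lacks. To get $\pi/2$ on the flat, it does not need exact endpoints: it picks $g'\in Z_g$ with $g'^{-\infty}$ merely \emph{close} to the midpoint of the arc $x_2y_2$, and a small-$\delta$ comparison shows $\dT(g'^{-\infty},x)>\dT(g'^{-\infty},x_2)$ for every $x\in C_1\setminus\{x_2\}$ and similarly for $C_2$; by Lemmas \ref{accum pt half flat} and \ref{conv of forward seq} this forces \emph{all} of $C_1\setminus\{x_2\}$ to flow to $x_1$ and all of $C_2\setminus\{y_2\}$ to $y_1$ under $\bar{g'}^n$, whence lower semicontinuity gives $\dT(x,y)\geq\dT(x_1,y_1)$ for every arc-pair avoiding $x_2,y_2$, and Lemma \ref{inf orbit} transports this inequality to the remaining pairs $(x_i,y_j)$. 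Since the four arcs of the circle $\pa F$ sum to $2\pi$, all four cross distances must equal $\pi/2$; the approximate midpoint does the work your exact-endpoint element was supposed to do. For the upper bound, the paper uses the join topology together with Theorem \ref{PS}: take $p$ on the arc from $x$ to $y$ with $\dT(p,x)<\epsilon$; any Tits geodesic from $p$ to $C_2$ through $x$ has length at least $\pi/2+\dT(p,x)$ by the established lower bound, while $\dT(p,C_2)\leq\pi/2$, so some near-minimizing geodesic avoids $x$ and therefore passes through $y$, making $y$ the closest point of $C_2$ to $p$ and giving $\dT(x,y)\leq\dT(x,p)+\dT(p,y)<\pi/2+\epsilon$. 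Your antipodality observation is compatible with, but cannot substitute for, these two steps.
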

\begin{proof}
Consider some $g\in G$ which is hyperbolic with endpoints not on
$C_1\union C_2$. Let $\pa \Min(g)=\pa F$. We will first prove that for
 $x_1,x_2\in C_1 \cap \pa F$, $y_1,y_2\in C_2 \cap \pa F$, we have $\dT(x_i,y_j) = \pi / 2$, where $i,j=1,2$. Take
any of the four arcs making up $\pa F$, say the arc joining $x_1$ and $y_1$.

The endpoints of hyperbolic elements in $Z_g$ are dense on $\pa F$,
so we can pick a $g' \in Z_g$ so that $g'^{-\infty}$ is as close to
the midpoint of arc $x_2$ and $y_2$ as we want. Let $0<\delta<
\min(\dT(x_2, C_2),\dT(y_2, C_1))$. Pick $g'$ so that $\left|
\dT(g'^{-\infty}, x_2) - \dT(g'^{-\infty}, y_2) \right| < \delta$. For
any point $x\in C_1$ other than $x_2$, if the Tits geodesic segment
from $g'^{-\infty}$ to $x$ passes through $y_2$, then
\begin{align*}\dT(g'^{-\infty},x)
& \geq \dT(g'^{-\infty},y_2)+\dT(y_2,C_1) \\
& > \dT(g'^{-\infty},x_2) - \delta + \dT(y_2,C_1) >
\dT(g'^{-\infty},x_2);
\end{align*}
 while if it passes through $x_2$ then obviously $\dT(g'^{-\infty},x) >
 \dT(g'^{-\infty},x_2)$. For any $y\in C_2$ other than $y_2$, by similar reasoning on the Tits geodesic segment from $g'^{-\infty}$ to $y$, we
 have $\dT(g'^{-\infty},y) > \dT(g'^{-\infty},y_2)$.

For any arc joining $x\neq x_2 \in C_1$ and $y\neq y_2 \in
C_2$, since $\dT(g'^{-\infty},x) > \dT(g'^{-\infty},x_2)$, the point
$x_2$ cannot be an accumulation point of $\bar {g'}^n \cdot x$ by Lemma
\ref{accum pt half flat}, then by Lemma \ref{conv of forward seq}, $\bar {g'}^n \cdot x \to x_1$. Likewise, $\bar {g'}^n \cdot y
\to y_1$. So
\begin{align}
\dT(x, y) &= \lim_{n\to\infty} \dT(\bar {g'}^n \cdot x, \bar {g'}^n \cdot y) \label{>=d x1,y1} \\
&\geq \dT(\lim_{n\to\infty}\bar {g'}^n \cdot x, \lim_{n\to\infty}\bar {g'}^n \cdot y) = \dT(x_1, y_1). \notag
\end{align}
For any other arc joining $x_i$ to $y_j$ in $\pa F$, by lemma \ref{inf orbit} there exists $h \in G$ such that $\bar h \cdot x_i \neq x_2$ and $\bar h \cdot y_j \neq y_2$, so from the inequality (\ref{>=d x1,y1}) we get
$$\dT(x_i, y_j) = \dT(\bar h \cdot x_i, \bar h \cdot y_j) \geq \dT(x_1,y_1).$$

Thus all arcs have equal length $\pi/2$. Now for any $x \in C_1$,
$y\in C_2$, by Lemma \ref{conv of forward seq} the sequence
$\bar g^n\cdot x$ can accumulate at $x_1$ or $x_2$, and $\bar
g^n\cdot y$ can accumulate at $y_1$ or $y_2$, so passing to some
subsequence $(\bar g^{n_k})$, we have convergence sequences $\bar
g^{n_k}\cdot x \to x_i$ and $\bar g^{n_k}\cdot y \to y_j$. Then we
have inequality
\begin{align}\label{>=pi/2}
\dT(x, y) = \lim_{n_k\to\infty} \dT(\bar g^{n_k}\cdot x, \bar g^{n_k}\cdot
y) \geq \dT(x_i,y_j) = \pi/2.
\end{align}

Take a point $p$ on the open arc joining $x$ and $y$. Without loss
of generality assume that $p$ and $x$ are connected in $\paT X$ by a
segment in the arc. For any $\epsilon > 0$, we may choose a new point
on the segment from $p$ to $x$ to replace $p$ so that $0<\dT(x,p) <\epsilon$. Consider the Tits geodesic from
$p$ to some point in $C_2$. If it passes through $x$, then it
consists of the segment from $p$ to $x$ and an arc from $x$ to some
point in $C_2$, so by the inequality (\ref{>=pi/2}) its Tits length is at least $\pi/2+\dT(x,p)$. By
Theorem \ref{PS} $\dT(p,C_2) \leq \pi/2$, so there must be a Tits
geodesic from $p$ to some point in $C_2$ that does not pass through
$x$, hence it passes through $y$. Its length is at least $\dT(p,y)$, so $y$ is the closest point in $C_2$ to $p$, so $\dT(p,y) = \dT(p,C_2) \leq
\pi/2$. Then $\dT(x, y) \leq \dT(x,p) + \dT(p,y) < \pi/2 +
\epsilon$. Letting $\epsilon \to 0$ we have $\dT(x, y)\leq \pi/2$.
Combining with the inequality (\ref{>=pi/2}), $\dT(x,
y) =\pi/2$.
\end{proof}

\begin{theorem}\label{spherical join}
If $X$ is a CAT(0) space which admits a geometric group action by a group containing a subgroup isomorphic to $\Z^2$, and $\pa X$ is homeomorphic to the
join of two Cantor sets, then $\paT X$ is the spherical join of two
uncountable discrete sets. If $X$ is geodesically complete, i.e.
every geodesic segment in $X$ can be extended to a geodesic line,
then $X$ is a product of two CAT(0) space $X_1,X_2$ with $\pa X_i$
homeomorphic to a Cantor set.
\end{theorem}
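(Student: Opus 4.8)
The plan is to strengthen Lemma~\ref{pi/2 apart}, which already gives the cross distances $\dT(x,y)=\pi/2$ for $x\in C_1$, $y\in C_2$, to the assertion that $(C_1,\dT)$ and $(C_2,\dT)$ are \emph{discrete} of diameter $\pi$: any two distinct points of $C_i$ lie at Tits distance exactly $\pi$. Since a Cantor set is uncountable, this exhibits $\paT X$ as the spherical join of two uncountable discrete sets. One inequality is immediate: for distinct $x_1,x_2\in C_1$ and any $y\in C_2$, concatenating the arc from $x_1$ to $y$ with the arc from $y$ to $x_2$ gives a path of Tits length $\pi/2+\pi/2=\pi$, so $\dT(x_1,x_2)\le\pi$. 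The whole content is the reverse inequality $\dT(x_1,x_2)\ge\pi$.

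For the lower bound I would argue by contradiction. Suppose $\dT(x_1,x_2)=L<\pi$ for distinct $x_1,x_2\in C_1$; since $\paT X$ is a complete CAT(1) space there is a (unique) Tits geodesic $\sigma$ from $x_1$ to $x_2$. First, $\sigma$ cannot meet $C_2$ in its interior: if an interior point $q$ of $\sigma$ lay in $C_2$, then $q$ being on a geodesic and Lemma~\ref{pi/2 apart} would force $L=\dT(x_1,q)+\dT(q,x_2)=\pi/2+\pi/2=\pi$, contradicting $L<\pi$. Hence $\sigma$ is a path lying entirely in $\pa X\setminus C_2$. Now the identity map from $\paT X$ (Tits topology) to $\pa X$ (cone topology) is continuous, so $\sigma$ is also a continuous path for the cone topology. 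Writing $\pa X=C_1*C_2=(C_1\times C_2\times[0,1])/{\sim}$, the first-factor projection $\pi_1\colon \pa X\setminus C_2\to C_1$ is continuous (it is induced through the quotient by the projection $C_1\times C_2\times[0,1)\to C_1$). Thus $\pi_1\circ\sigma$ is a continuous map from a connected interval into the totally disconnected set $C_1$, hence constant, which gives $x_1=\pi_1(x_1)=\pi_1(x_2)=x_2$, a contradiction. Therefore $\dT(x_1,x_2)=\pi$, and the same argument applied to $C_2$ shows $\paT X$ is the spherical join of the uncountable discrete sets $C_1$ and $C_2$. I expect this step, specifically the interplay between the geodesic avoiding the closed set $C_2$ and the total disconnectedness of the Cantor factor, to be the crux; the delicate points are that a Tits geodesic of length $<\pi$ really cannot touch $C_2$ and that the factor projection is genuinely continuous in the join topology.

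For the geodesically complete case I would pass from the join decomposition of the Tits boundary to a metric product decomposition of $X$. Since $X$ is proper and geodesically complete, admits a geometric group action, and $\paT X$ is the nontrivial spherical join $C_1*C_2$, the standard correspondence between spherical-join decompositions of the Tits boundary and metric-product decompositions of the space (under geodesic completeness and cocompactness) yields an isometric splitting $X=X_1\times X_2$ with $\paT X_i$ equal to the corresponding discrete join factor, i.e.\ to $C_i$. Finally, for a product the cone boundary is the topological join $\pa X=\pa X_1*\pa X_2$; comparing with $\pa X=C_1*C_2$ factorwise identifies $\pa X_i$ with $C_i$, so each $\pa X_i$ is homeomorphic to a Cantor set, as required.
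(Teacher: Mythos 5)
Your proposal is correct and follows essentially the same route as the paper: Lemma \ref{pi/2 apart} for the cross distances $\dT(x,y)=\pi/2$, discreteness of each $C_i$ in the Tits metric, and then the splitting theorem (Theorem II.9.24 of \cite{BH}) under geodesic completeness to obtain $X=X_1\times X_2$ with $\pa X_i \cong C_i$. The paper asserts the key step $\dT(x_1,x_2)=\pi$ for distinct $x_1,x_2\in C_i$ with no justification, and your argument for it --- a Tits geodesic of length $<\pi$ cannot meet $C_2$, is cone-continuous, and would therefore project under the continuous fan projection to a nonconstant path in the totally disconnected set $C_1$ --- is a correct filling-in of exactly that step rather than a different approach.
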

\begin{proof}
We have shown that for any $x\in C_1$, $y\in C_2$, $\dT(x,y) = \pi
/2$ in Lemma \ref{pi/2 apart}, so every two distinct points in $C_i$
has Tits distance $\pi$ for $i=1,2$, i.e. $C_i$ with the Tits metric
is an uncountable discrete set. Then $\paT X$ is isomorphic to the
spherical join of $C_1$ and $C_2$, giving the first result. So with
the additional assumption that $X$ is geodesically complete, it
follows by Theorem II.9.24 of \cite{BH} that $X$ splits as a product
$X_1 \times X_2$, with $\pa X_i = C_i$ for $i=1,2$.
\end{proof}

\section{Some properties of the group}
We will show Theorem \ref{mainthm2} in this section. Assuming that $X$ is geodesically complete, and hence reducible by
Theorem \ref{spherical join}, we have the following result for the
group $G$. We do not require that $G$ stabilizes each of $C_1$ and
$C_2$ in this section.
\begin{theorem}\label{G to product of isom}
Let $X$ be a CAT(0) space such that $\pa X$ is homeomorphic to the
join of two Cantor sets and suppose $X$ is geodesically complete.
For a group $G < \Isom(X)$ containing $\Z^2$ and acting geometrically on $X$, either $G$ or a
subgroup of it of index 2 is a uniform lattice in $\Isom(X_1) \times
\Isom(X_2)$, where $X_1, X_2$ are given by Theorem \ref{spherical
join}.
\end{theorem}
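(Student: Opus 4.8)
The plan is to pass to the subgroup of $G$ that preserves the two boundary Cantor sets, to identify it with a subgroup of $\Isom(X_1)\times\Isom(X_2)$ using the product structure from Theorem \ref{spherical join}, and then to verify the three defining properties of a uniform lattice: discreteness, properness of the ambient action, and cocompactness of the quotient.

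First I would invoke Lemma \ref{subgroup stabilizing C_1, C_2} to replace $G$ by the subgroup $G'$, equal to $G$ or of index $2$ in $G$, that stabilizes each of $C_1$ and $C_2$; this is precisely the dichotomy in the statement. By Theorem \ref{spherical join} we have $X=X_1\times X_2$ with $\pa X_i=C_i$, and it suffices to prove that $G'$ is a uniform lattice in $H:=\Isom(X_1)\times\Isom(X_2)$.

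The key step is to show that every isometry $\phi$ of $X$ which stabilizes both $C_1$ and $C_2$ has product form $(\phi_1,\phi_2)$ with $\phi_i\in\Isom(X_i)$. Here I would characterize the horizontal slices $X_1\times\{q\}$ intrinsically. Since every geodesic of a product is a product of geodesics, a geodesic line of $X$ has both endpoints in $C_1=\pa X_1$ precisely when its $X_2$-component is constant, i.e. when it is horizontal. Geodesic completeness is used now: any two points of $X_1$ lie on a common geodesic line, because the segment joining them extends to a line, so each slice $X_1\times\{q\}$ is the union of the horizontal lines through any one of its points and is therefore determined by the family of horizontal geodesics. As $\phi$ stabilizes $C_1$, it carries horizontal geodesics to horizontal geodesics and hence permutes the horizontal slices; applying the same argument to $C_2$, it permutes the vertical slices $\{p\}\times X_2$. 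An isometry preserving both foliations must be a product $(\phi_1,\phi_2)$, and the product form of the metric forces each $\phi_i$ to be an isometry of $X_i$. This produces an injective homomorphism $G'\hookrightarrow H$ compatible with the natural inclusion $H\hookrightarrow\Isom(X)$. I expect this step to be the main obstacle, since it is where geodesic completeness is genuinely needed and where one must rule out any shearing of the factors.

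It remains to check the lattice conditions. Because $X$ is proper, so are the factors $X_1,X_2$; each $\Isom(X_i)$ is then locally compact and acts properly on $X_i$, hence $H$ is locally compact and acts properly on $X=X_1\times X_2$. As $G'$ has finite index in $G$ it still acts cocompactly, and therefore the larger group $H\supseteq G'$ also acts cocompactly on $X$. Since $G$ acts properly discontinuously it is discrete in $\Isom(X)$, so $G'$ is a discrete subgroup of $H$. Finally I would apply the standard criterion that a discrete subgroup of a locally compact group $H$ acting properly and cocompactly on a proper space is a uniform lattice as soon as it itself acts cocompactly: fixing compact sets $K$ with $HK=X$ and $D$ with $G'D=X$, the set $\{h\in H:hK\cap D\neq\varnothing\}$ is precompact by properness, and a short point-chasing argument gives $H=G'\cdot\overline{\{h\in H:hK\cap D\neq\varnothing\}}$, so that $H/G'$ is compact. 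Hence $G'$ is a uniform lattice in $\Isom(X_1)\times\Isom(X_2)$, which is the assertion of the theorem.
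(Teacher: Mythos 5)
Your proposal is correct and follows essentially the same route as the paper: pass to the index-$\le 2$ subgroup stabilizing $C_1$ and $C_2$ via Lemma \ref{subgroup stabilizing C_1, C_2}, then use geodesic completeness to extend a segment lying in a slice to a geodesic line whose endpoints lie in $C_i$, so that stabilizing the $C_i$ forces each group element to preserve the fibers of both projections and hence split as a product isometry. The paper compresses the final verification that a discrete, cocompactly acting subgroup of $\Isom(X_1)\times\Isom(X_2)$ is a uniform lattice into ``hence the result,'' whereas you spell it out; that is a fair expansion, not a different argument.
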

\begin{proof}
We know from Theorem \ref{spherical join} that $X=X_1 \times X_2$,
so we only need to show that $G$ or a subgroup of it of index 2
preserves this decomposition.

By Lemma \ref{subgroup stabilizing C_1, C_2}, either
$G$ or a subgroup of it of index 2 stabilizes $C_1$ and $C_2$.
Replacing $G$ by its subgroup if necessary, we assume $G$ stabilizes
$C_1$ and $C_2$.

Denote by $\pi_i$ the projection of $X$ to $X_i$, $i=1,2$. Take any
$p_1, p_2\in X$ such that $\pi_2(p_1) = \pi_2(p_2)$. Extend
$[p_1,p_2]$ to a geodesic line $\gamma$, its projection to each of
$X_i$ is the image of a geodesic line. Since $X_1$ is totally
geodesic, the geodesic segment $[p_1,p_2]$ projects to a single
point $\pi_2(p_1)$ on $X_2$, i.e. a degenerated geodesic segment, so
$\pi_2(\gamma)$ is also a degenerated geodesic line. Thus the
endpoints $\gamma(\pm\infty)$ are in $C_1$. Now $\bar g \cdot
\gamma$ is a geodesics line passing through $\bar g \cdot p_1$,
$\bar g \cdot p_2$, and its endpoints $\bar g \cdot
\gamma(\pm\infty) \in C_1$, so $\pi_2(\bar g \cdot p_1) = \pi_2(\bar
g \cdot p_2)$. Similarly, for any $q_1, q_2\in X$ such that
$\pi_1(q_1) = \pi_1(q_2)$ we have $\pi_1(\bar g \cdot q_1) =
\pi_1(\bar g \cdot q_2)$. So $G$ preserves the decomposition $X =
X_1 \times X_2$, hence the result.
\end{proof}

We will show that $\Isom(X_i)$ is
isomorphic to a subgroup of $\Homeo(C_i)$ by the following lemma.

\begin{lemma}\label{map isom to homeo has finite kernel}
Suppose $X'$ is a proper complete CAT(0) space, and $G'<\Isom(X')$
acts properly on $X'$ by isometries.
\begin{enumerate}
  \item If $S\subset \pa X'$ is a set of points on the boundary such
  that the intersection $\bigcap_{w\in S} \overline{\bT(w,\pi/2)}$ is empty,
  then there exists a point $q\in X$ such that any non-hyperbolic
  $g\in \Isom(X')$ that fixes $S$ pointwise will fix $q$. In
  particular, such $g$ is elliptic.
    \item If $\pa X'$ is not a suspension and the radius of $\paT X'$
    is larger than $\pi/2$, then the map $G'\to \Homeo(\pa X')$, defined
    by extending the action of $G'$ to the boundary $\pa X'$, has a
    finite kernel, i.e. the subgroup of $G'$ that acts trivially on
    the boundary is finite. Moreover, assume the action of $G'$ is cocompact, then the kernel fixes a subspace of $X'$ with boundary $\pa X'$.
\end{enumerate}
\end{lemma}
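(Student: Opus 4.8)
The plan is to deduce part (2) from part (1) with little extra work, and to put all the effort into part (1). For part (1), fix a basepoint $o\in X'$ and for each $w\in S$ let $b_w$ be the Busemann function at $w$ normalized by $b_w(o)=0$. First I would replace $S$ by a finite subset: since $X'$ is proper, $\pa X'$ is cone-compact, each closed Tits ball $\overline{\bT(w,\pi/2)}$ is cone-closed (by lower semicontinuity of $\dT$), and $\bigcap_{w\in S}\overline{\bT(w,\pi/2)}=\varnothing$, so by compactness some finite $S_0=\{w_1,\dots,w_k\}\subseteq S$ already has $\bigcap_i\overline{\bT(w_i,\pi/2)}=\varnothing$. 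I would then form the convex $1$-Lipschitz function $f=\max_i b_{w_i}$ and show that this emptiness makes $f$ \emph{coercive} (bounded sublevel sets): if $\{f\le c\}$ were unbounded, a sequence $x_n\to\zeta\in\pa X'$ with $b_{w_i}(x_n)\le c$ would give, via convexity of $b_{w_i}$ along $[o,x_n]$, that $b_{w_i}\le 0$ along the ray $\rho_\zeta$ to $\zeta$, hence $\dT(w_i,\zeta)\le\pi/2$ for all $i$ (using $\lim_t t^{-1}b_{w_i}(\rho_\zeta(t))=-\cos\dT(w_i,\zeta)$), contradicting emptiness. Coercivity plus properness makes $f$ attain its minimum on a nonempty compact convex set $M$, and I define $q\in X'$ to be the circumcenter of $M$.

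The easy half is that any \emph{elliptic} $g$ fixing $S$ pointwise fixes $q$: since $g$ fixes each $w_i$ at infinity it preserves the horoballs there, so $b_{w_i}\circ g=b_{w_i}+\lambda_i(g)$ for constants $\lambda_i(g)$, and evaluating at a point fixed by $g$ forces $\lambda_i(g)=0$; thus $f\circ g=f$, so $g$ preserves $M$ and fixes its circumcenter $q$. Everything therefore reduces to the clause flagged in the statement: a non-hyperbolic $g$ fixing $S$ pointwise must be elliptic. (Conversely, once $g$ is shown elliptic, the previous sentence shows it fixes the common point $q$.)

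To prove ellipticity, suppose $g$ is parabolic, so $|g|:=\inf_x d(x,gx)$ is not attained. The geometric input I would establish is a \emph{descent property}: for any $g$-fixed $w$, the ray $[x,w)$ is mapped by $g$ to the asymptotic ray $[gx,w)$, and the distance between two asymptotic rays is convex and bounded, hence nonincreasing; so $x\mapsto d(x,gx)$ is nonincreasing along $[x,w)$. Consequently each near-minimal set $A_\varepsilon=\{x:d(x,gx)\le|g|+\varepsilon\}$ is nonempty, closed, convex, $g$-invariant, and star-shaped toward every point of $S$, so $w_i\in\pa A_\varepsilon$ for all $i$. On the other hand emptiness forces the Tits diameter of $S$ to exceed $\pi/2$ (otherwise any $w\in S$ would be a common $\pi/2$-center). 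I would then run the gradient flow of the convex function $d(\cdot,g\cdot)$: as its infimum is unattained the flow escapes to a canonical $g$-fixed boundary point $\xi$, and I would use the descent/star-shapedness to show $\dT(\xi,w_i)\le\pi/2$ for every $i$, placing $\xi\in\bigcap_i\overline{\bT(w_i,\pi/2)}=\varnothing$ — a contradiction, so $g$ is elliptic. \textbf{I expect this last step — identifying the escape direction $\xi$ of a hypothetical parabolic and showing it lies within Tits distance $\pi/2$ of all of $S$ — to be the main obstacle}, because the constants $\lambda_i(g)$ can genuinely be nonzero (a product $Y\times\R$ with a parabolic on $Y$ shows this once the emptiness hypothesis is dropped), and one must use the spread of $S$ to rule exactly this out.

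Finally, for part (2), let $K$ be the kernel of $G'\to\Homeo(\pa X')$, so each $g\in K$ fixes $\pa X'$ pointwise. A hyperbolic $g\in K$ would act as the identity on $\pa X'$, and then Lemma \ref{accum pt half flat} applied to the constant orbit $\bar g^{\,n}z=z$ gives $\dT(g^{-\infty},z)+\dT(z,g^{\infty})=\pi$ for every $z$, exhibiting $\paT X'$ as a spherical suspension over $g^{\pm\infty}$ and contradicting that $\pa X'$ is not a suspension; hence every element of $K$ is non-hyperbolic. The condition that $\paT X'$ has radius $>\pi/2$ is exactly $\bigcap_{w\in\pa X'}\overline{\bT(w,\pi/2)}=\varnothing$, so part (1) with $S=\pa X'$ produces a single $q\in X'$ fixed by all of $K$; since $G'$ acts properly, $\op{Stab}_{G'}(q)$ is finite, whence $K$ is finite. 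For the last clause, $\Fix(K)=\bigcap_{g\in K}\Fix(g)$ is closed, convex, and contains $q$; for any $\xi\in\pa X'$ each $g\in K$ sends $[q,\xi)$ to $[gq,g\xi)=[q,\xi)$, so by uniqueness of the ray from $q$ to $\xi$ this ray lies in $\Fix(K)$, giving $\pa\Fix(K)=\pa X'$, i.e. $K$ fixes a subspace carrying the full boundary.
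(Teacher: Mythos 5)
Your construction of $q$ in part (1) and your entire part (2) are sound, and they run parallel to the paper's proof: the paper intersects closed horoballs $H_w$ containing a basepoint and quotes Lemma 3.5 of \cite{CM1} (that $\pa H_w = \overline{\bT(w,\pi/2)}$) to conclude the intersection is bounded, hence compact and convex, and then takes its circumcenter; your coercivity argument for $f=\max_i b_{w_i}$ is essentially a re-proof of that cited lemma, and your min-set-plus-circumcenter point plays the role of the paper's $q$. (Your handling of the last clause of (2), fixing each ray $[q,\xi)$ pointwise, is actually cleaner than the paper's quasi-dense-orbit argument and does not even need cocompactness.) However, the proposal does not prove part (1): the step you yourself flag as the main obstacle is genuinely missing, and it is not a technical loose end but the entire content of the lemma. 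You must show that a \emph{non-hyperbolic} $g$ fixing $S$ pointwise has vanishing Busemann shift $\lambda_w(g)$ at every $w\in S$ (equivalently, preserves all horoballs centered at points of $S$), or equivalently that no parabolic isometry can fix such an $S$ pointwise. You complete this only for elliptic $g$, where it is trivial (evaluate the shift at a fixed point). Your gradient-flow sketch for the parabolic case supplies none of the needed ingredients: no construction of the escape point $\xi$, no proof that it is canonical and $g$-fixed, and no proof of $\dT(\xi,w_i)\le\pi/2$; and since part (2) only knows that kernel elements are non-hyperbolic (a priori possibly parabolic), part (2) collapses without this step.

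Two remarks to calibrate the gap. First, it can be closed for free when $g$ has zero translation length: since $b_w$ is $1$-Lipschitz and $b_w\circ g=b_w+\lambda_w(g)$, one has $\abs{\lambda_w(g)}\le d(x,gx)$ for every $x$, hence $\abs{\lambda_w(g)}\le \inf_x d(x,gx)=0$; then $f\circ g=f$, $g$ preserves the compact set $M$, and $g$ is elliptic. So the only genuinely open case in your write-up is a parabolic with positive but unattained translation length, and there — exactly as your $Y\times\R$ example shows (take $\H^2\times\R$ with $g$ a parabolic on $\H^2$ times a unit translation of $\R$, which fixes the pole yet shifts its Busemann function by $1$) — the shifts at individual fixed points are genuinely nonzero, so any proof must exploit the emptiness hypothesis globally rather than pointwise. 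Second, for comparison: the paper dispatches this identical point in one sentence, ``observe that any such $g$ stabilizes all horospheres \dots centered at every $w\in S$,'' i.e.\ it asserts without argument precisely the statement you could not prove; as a claim about each individual fixed point it is false for parabolics of positive translation length, by your own example. So you have correctly isolated the real difficulty of Lemma \ref{map isom to homeo has finite kernel}, and your identification of it is more honest than the paper's treatment, but as submitted your proposal proves the lemma only for isometries of zero translation length and therefore has a genuine gap.
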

\begin{proof}
To prove (1), observe that any such $g$ stabilizes all horospheres
and thus all horoballs centered at every $w\in S$. Take an arbitrary
point  $q'\in X$ and choose for each $w$ a closed horoball $H_w$
centered at $w$ that contains $q'$. Their intersection
$\bigcap_{w\in S} H_w$ is non-empty since it contains $q'$. By Lemma
3.5 of \cite{CM1} $\pa H_w =  \bar{\mathrm{B_T}(w,\pi/2)}$, then
$\pa (\bigcap_{w\in S} H_w) \subset \bigcap_{w\in S}(\pa H_w)
=\varnothing$. So $\bigcap_{w\in S} H_w$ is bounded. Also as every
$H_w$ is stabilized by $g$, so is $\bigcap_{w\in S} H_w$. As
$\bigcap_{w\in S} H_w$ is convex and compact, it contains a unique center
$q$, where the function $\sup \{\mathrm{d}_X(\cdot,z):z\in\bigcap_{w\in S} H_w \}$ is minimized. Then $g$
fixes $q$.

For (2), if $g\in G'$ acts by hyperbolic isometry, then $\pa \Min(g)
= \Fix(\bar g)$ is a suspension. Then any $g$ acting trivially on
the whole boundary $\pa X'$ is not hyperbolic. As $\paT X'$ has
radius larger than $\pi/2$, for every $x \in\pa X'$ there is some
$w\in\pa X'$ such that $\dT(x,w) > \pi/2$, so $x \notin
\bar{\mathrm{B_T}(w,\pi/2)}$, hence $S=\pa X'$ satisfies the
condition in (1). Now (1) implies that the kernel of $G' \to
\Homeo(\pa X')$ is a subgroup of the stabilizer of some point $q \in
X'$. As the action of $G'$ is proper, the kernel is finite.

Let $K$ be the kernel. The set fixed by $K$ is closed and convex. For any point $q$ fixed by the kernel, as $g \cdot q$ is fixed by $g K g^{-1} = K$, then $G' \cdot q$ is fixed by $K$. If the action of $G'$ is cocompact, then the set fixed by $K$ is quasi-dense, hence it is a subspace with boundary $\pa X'$.
\end{proof}

\begin{corollary}
Let $X$ be a geodesically complete CAT(0) space such that $\pa X$ is homeomorphic to the
join of two Cantor sets.  Then for a group $G < \Isom(X)$ containing $\Z^2$ and acting geometrically on $X$, either $G$ or a subgroup of it of
index 2 is isomorphic to a subgroup of $\Homeo(C_1) \times
\Homeo(C_2)$.
\end{corollary}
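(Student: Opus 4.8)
The plan is to produce an injective homomorphism from $G$ (or its index-$2$ subgroup) into $\Homeo(C_1)\times\Homeo(C_2)$ by factoring the boundary action through the product structure. First I would invoke Theorem \ref{spherical join} to write $X=X_1\times X_2$ with $\pa X_i$ the Cantor set $C_i$, and Theorem \ref{G to product of isom} to replace $G$ by its index-$2$ subgroup, if necessary, so that $G$ preserves the splitting. This yields a tautological injection $G\hookrightarrow \Isom(X_1)\times\Isom(X_2)$, which is injective because $G<\Isom(X)$ acts faithfully on $X=X_1\times X_2$. It then suffices to produce injections $\Isom(X_i)\hookrightarrow\Homeo(C_i)$ by extending isometries to the boundary, and to compose.

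The heart of the argument is to show that the boundary-extension map $\Isom(X_i)\to\Homeo(\pa X_i)=\Homeo(C_i)$ has trivial kernel. First I note that each $X_i$ is a proper complete CAT(0) space (factors of a proper space are proper) and is geodesically complete (a nondegenerate segment $\sigma$ in $X_i$ lifts to the horizontal segment $(\sigma,p_2)$ in $X$, extends to a line there, and projects back to a geodesic line of $X_i$ extending $\sigma$, since the $X_2$-component stays constant). By Theorem \ref{spherical join}, $\paT X_i=C_i$ consists of points pairwise at Tits distance $\pi$, so $\bigcap_{w\in C_i}\overline{\bT(w,\pi/2)}=\varnothing$ and $\paT X_i$ has radius $\pi>\pi/2$; thus the hypotheses of both parts of Lemma \ref{map isom to homeo has finite kernel} hold for $X_i$. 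Suppose $g\in\Isom(X_i)$ fixes $C_i$ pointwise. Since $\pa X_i=C_i$ is not a suspension while $\Fix(\bar g)=\pa\Min(g)$ is a suspension for hyperbolic $g$, the element $g$ cannot be hyperbolic; part (1) of Lemma \ref{map isom to homeo has finite kernel} then shows $g$ is elliptic and fixes some point $q\in X_i$.

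The final, decisive step uses geodesic completeness to upgrade the fixed point $q$ to all of $X_i$. For any $x\in X_i$ I would extend the segment $[q,x]$ to a geodesic ray $\rho$ issuing from $q$; its ideal endpoint $\rho(\infty)\in\pa X_i=C_i$ is fixed by $g$. Since $g$ fixes both $q$ and $\rho(\infty)$, and the geodesic ray from $q$ to a given boundary point is unique in a CAT(0) space, $g$ maps $\rho$ to itself and, being an isometry fixing its initial point, fixes $\rho$ pointwise; in particular $g\cdot x=x$. Hence $g=\mathrm{id}$, so $\Isom(X_i)\hookrightarrow\Homeo(C_i)$ is injective, and composing with $G\hookrightarrow\Isom(X_1)\times\Isom(X_2)$ gives the desired embedding $G\hookrightarrow\Homeo(C_1)\times\Homeo(C_2)$.

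I expect the main obstacle to be precisely this last upgrade: the cited lemma only delivers a point fixed by each kernel element (and, via part (2), finiteness of the kernel when the action is proper), whereas the corollary asserts injectivity, i.e. a trivial kernel. Geodesic completeness is exactly what closes this gap, since without it an elliptic isometry fixing the whole boundary and a single point need not be the identity. A secondary point requiring care is the verification of the hypotheses of Lemma \ref{map isom to homeo has finite kernel} for the factors $X_i$ (that $\pa X_i$ is not a suspension and that its Tits radius exceeds $\pi/2$), both of which follow from the join description of $\paT X$ furnished by Theorem \ref{spherical join}.
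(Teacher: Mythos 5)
Your proposal is correct, and it runs along the same route as the paper: the paper's entire proof of this corollary is a one-line citation of Theorem \ref{G to product of isom} (to pass to the index-$\leq 2$ subgroup preserving $X = X_1 \times X_2$ and embed it in $\Isom(X_1)\times\Isom(X_2)$) together with Lemma \ref{map isom to homeo has finite kernel} (to control the kernel of the boundary action), which are exactly the two ingredients you invoke. Where you go beyond the paper is in the last step, and you correctly identify it as the crux: Lemma \ref{map isom to homeo has finite kernel} by itself only delivers a \emph{finite} kernel (part (2)), a fixed point for each kernel element (part (1)), and, in the cocompact case, a fixed convex subspace with boundary all of $\pa X'$ --- none of which is literally the triviality of the kernel that the word ``isomorphic'' in the statement requires. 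Your argument --- extend $[q,x]$ to a ray via geodesic completeness, observe its endpoint lies in $C_i$ and is fixed, and use uniqueness of the ray from $q$ to that endpoint to force $g\cdot x = x$ --- is precisely the upgrade the paper leaves implicit; it also explains why the paper's subsequent theorem, where geodesic completeness is dropped, can only conclude that a \emph{finite quotient} of $G$ embeds in $\Homeo(C_1)\times\Homeo(C_2)$. A further minor variation: you run the kernel analysis factor-by-factor on the full groups $\Isom(X_i)$, using only part (1) of the lemma plus the suspension argument, rather than applying part (2) to $G$ acting on $X$; this is slightly more delicate than it looks, since part (2) assumes a proper (properly discontinuous, in the paper's convention) action, which the full isometry group does not enjoy, but your route avoids that hypothesis entirely, so the verification is sound and in fact yields the stronger statement that $\Isom(X_i)\to\Homeo(C_i)$ itself is injective.
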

\begin{proof}
This follows from Theorem \ref{G to product of isom} and Lemma \ref{map isom to homeo has finite kernel}.
\end{proof}

We can still show this without the geodesic completeness assumption.

\begin{theorem}
Let $X$ be a CAT(0) space such that $\pa X$ is homeomorphic to the
join of two Cantor sets. Then for a group $G < \Isom(X)$ containing $\Z^2$ and acting geometrically on $X$, a finite quotient of either $G$ or a subgroup of
$G$ of index 2 is isomorphic to a subgroup in $\Homeo(C_1) \times
\Homeo(C_2)$.
\end{theorem}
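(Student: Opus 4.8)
The plan is to reduce the general (not necessarily geodesically complete) case to the geodesically complete situation already handled by the preceding Corollary, by replacing $X$ with a suitable quotient or subspace on which the action becomes trivial on a finite-index piece. First I would recall the structure already established: by Lemma \ref{subgroup stabilizing C_1, C_2}, after passing to an index-$2$ subgroup if necessary, we may assume $G$ stabilizes each of $C_1$ and $C_2$. By Lemma \ref{pi/2 apart} and Theorem \ref{spherical join}, $\paT X$ is the spherical join of the uncountable discrete sets $C_1$ and $C_2$; in particular $\pa X$ is not a suspension and the Tits radius of $\paT X$ is larger than $\pi/2$ (since any point of $C_1$ is at Tits distance $\pi$ from every other point of $C_1$). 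This is exactly the hypothesis needed to invoke part (2) of Lemma \ref{map isom to homeo has finite kernel}.

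The key step is therefore to apply Lemma \ref{map isom to homeo has finite kernel}(2) directly to $X'=X$ and $G'=G$. That lemma only requires $X$ to be proper and complete and the action to be proper by isometries, together with $\pa X$ not a suspension and Tits radius exceeding $\pi/2$ — \emph{it does not require geodesic completeness}. The lemma yields that the boundary-action homomorphism $G \to \Homeo(\pa X)$ has finite kernel $K$. Since $G$ stabilizes $C_1$ and $C_2$, every $\bar g$ permutes $C_1$ and permutes $C_2$, so the image of $G$ in $\Homeo(\pa X)$ lands in the subgroup preserving both Cantor sets. Because $\pa X$ is the join of $C_1$ and $C_2$, a homeomorphism of $\pa X$ preserving each $C_i$ is determined by its restrictions to $C_1$ and $C_2$ (the join structure, i.e. the segments, is reconstructed from the endpoints), so this subgroup of $\Homeo(\pa X)$ injects into $\Homeo(C_1)\times\Homeo(C_2)$.

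Putting these together, the finite quotient $G/K$ injects into $\Homeo(\pa X)$ and hence into $\Homeo(C_1)\times\Homeo(C_2)$, which is precisely the claimed statement (after the initial passage to the index-$2$ subgroup, if needed). I would write the argument as: let $K$ be the finite kernel from Lemma \ref{map isom to homeo has finite kernel}(2); then $G/K \hookrightarrow \Homeo(\pa X)$, and the image consists of homeomorphisms preserving $C_1$ and $C_2$, giving an injection $G/K \hookrightarrow \Homeo(C_1)\times\Homeo(C_2)$.

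The main obstacle, and the only place requiring genuine care, is the claim that a homeomorphism of the join $C_1 * C_2$ preserving each factor is determined by its action on $C_1 \sqcup C_2$, i.e. that the natural map from the $C_i$-preserving subgroup of $\Homeo(\pa X)$ to $\Homeo(C_1)\times\Homeo(C_2)$ is injective. Here one must argue that a join homeomorphism fixing every point of $C_1\cup C_2$ fixes every point of $\pa X$: each interior point of $\pa X$ lies on a unique arc joining a point of $C_1$ to a point of $C_2$, this arc is topologically characterized (as a component of the complement of $C_1\cup C_2$, compactified by its two endpoints), and so is preserved; the homeomorphism then fixes the arc's endpoints, but a priori could still move points along the arc. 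To close this gap one can either invoke that such interior points are distinguished by their $G$-orbit behavior, or — more cleanly in the geodesically complete reduction — note that the finite quotient statement is weaker than part (2) already gives, since Lemma \ref{map isom to homeo has finite kernel}(2) was proved using only the Tits-radius hypothesis that Lemma \ref{pi/2 apart} supplies. I expect the cleanest exposition simply cites Lemma \ref{map isom to homeo has finite kernel}(2) to get finiteness of $K$ and observes that the induced faithful action of $G/K$ on $\pa X$, respecting the bipartite join structure, factors through $\Homeo(C_1)\times\Homeo(C_2)$.
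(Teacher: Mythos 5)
Your overall architecture matches the paper's: pass to the index-2 subgroup stabilizing each $C_i$ (Lemma \ref{subgroup stabilizing C_1, C_2}), note that Lemma \ref{pi/2 apart} holds without geodesic completeness so that $\pa X$ is not a suspension and $\paT X$ has radius exceeding $\pi/2$, and invoke Lemma \ref{map isom to homeo has finite kernel}(2). But the step you flag as ``the only place requiring genuine care'' is a genuine gap, and neither of your proposed fixes closes it. What must be shown is that any $g\in G$ fixing $C_1\cup C_2$ pointwise fixes all of $\pa X$ pointwise, i.e.\ that $\ker\bigl(G\to\Homeo(C_1)\times\Homeo(C_2)\bigr)$ coincides with $\ker\bigl(G\to\Homeo(\pa X)\bigr)$; only then does finiteness of the latter (from Lemma \ref{map isom to homeo has finite kernel}(2)) give finiteness of the former, which is what the theorem asserts. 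Your first suggestion (``interior points are distinguished by their $G$-orbit behavior'') is not an argument, and your second --- that the statement ``is weaker than part (2) already gives'' --- is false: an embedding $G/K\hookrightarrow\Homeo(\pa X)$ does not by itself produce an embedding into $\Homeo(C_1)\times\Homeo(C_2)$, because the restriction homomorphism could kill exactly the elements you are worried about. Indeed, purely topologically there is no rigidity here at all: $\Homeo(\pa X)$ contains an enormous subgroup fixing $C_1\cup C_2$ pointwise (reparametrize each arc, say by $t\mapsto t^2$ on $[0,1]$), so no argument that sees only the homeomorphism group and the join structure can succeed.

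The paper closes the gap with the metric structure you already had in hand. By Lemma \ref{pi/2 apart}, every arc is a Tits geodesic segment of length $\pi/2$. An element $g$ acting trivially on $C_1\cup C_2$ acts on $\paT X$ as an isometry fixing both endpoints of each arc; since those endpoints are at Tits distance $\pi/2<\pi$, the Tits geodesic between them is unique, so $g$ maps each arc onto itself, and an isometry of a segment fixing its endpoints is the identity. Hence $g$ fixes every point of $\pa X$, the two kernels coincide, and Lemma \ref{map isom to homeo has finite kernel}(2) finishes the proof. This repair is short, but it is precisely the missing idea: the rigidity comes from the isometric action on $\paT X$, not from the topology of the join.
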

\begin{proof}
Assume $G$ stabilizes each of $C_1$ and $C_2$ as in the proof of
Theorem \ref{G to product of isom}. Each $g\in G$ acts on $\pa X$ as
a homeomorphism, so it acts on $C_i \subset \pa X$ also as a
homeomorphism.

Suppose $g$ acts trivially on $C_1$ and $C_2$, i.e. $g$ is in the
kernel of $G\to \Homeo(C_1) \times \Homeo(C_2)$ . Then for any point
$x\in \pa X$ outside $C_1 \union C_2$, the arc on which $x$ lies is
a Tits geodesic segment of length $\pi/2$ in $\paT X$. Since $g$
acts on $\paT X$ by isometry and both endpoints of this Tits
geodesic segment are fixed by $g$, so $g$ fixes the whole arc, thus
$\bar g \cdot x = x$. Hence $g$ acts trivially on $\pa X$. One can
check that $\paT X$ has radius larger than $\pi/2$, so by Lemma
\ref{map isom to homeo has finite kernel} $G\to\Homeo(\pa X)$ has
finite kernel. Hence the result.
\end{proof}

In the case when $X$ is geodesically complete, actually we can prove
a stronger result, expressed in the last statement of Theorem
\ref{mainthm2}. Observe that $X_i$ is a Gromov hyperbolic space by
the Flat Plane Theorem, which states that a proper cocompact CAT(0)
space $Y$ is hyperbolic if and only if it does not contain a
subspace isometric to $\E^2$. Recall that a cocompact space is
defined as a space $Y$ which has a compact subset whose images under
the action by $\Isom(Y)$ cover $Y$. The (projected) action of $G$ on
$X_i$ is cocompact, even though the image in $\Isom(X_i)$ may not be discrete. As $\pa X_i$ does not contain
$S^1$, the result follows.

We will show $X_i$ is quasi-isometric to a tree. This is equivalent
to having  the \textit{Bottleneck Property} by a theorem of Manning,
which he proved with an explicit construction:
\begin{theorem}[\cite{Mann}, Theorem 4.6]\label{Manning}
Let $Y$ be a geodesic metric space. The following are equivalent:
\begin{enumerate}
  \item $Y$ is quasi-isometric to some simplicial tree $\Gamma$.
  \item (Bottleneck Property) There is some $\Delta > 0$ so that for all $x, y \in
  Y$ there is a midpoint $m = m(x, y)$ with $d(x, m) = d(y, m) = \frac 1 2 d(x,
  y)$ and the property that any path from $x$ to $y$ must pass
  within less than $\Delta$ of the point $m$.
\end{enumerate}
\end{theorem}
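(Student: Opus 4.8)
The plan is to establish the two implications separately, since the statement is an equivalence. The direction (1) $\Rightarrow$ (2) is the routine one and I would dispatch it first; the real content lies in (2) $\Rightarrow$ (1), where one must manufacture a tree out of nothing but the bottleneck hypothesis.

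For (1) $\Rightarrow$ (2), I would start from a $(\lambda,\epsilon)$-quasi-isometry $f\colon Y\to\Gamma$ with coarse inverse $\bar f$, and exploit the fact that in a simplicial tree the midpoint $m_\Gamma$ of the segment $[f(x),f(y)]$ is a genuine cut point. Given $x,y\in Y$, pick an exact midpoint $m$ of some geodesic $[x,y]$, which exists because $Y$ is geodesic. Since geodesics in a tree are unique and $f$ is a quasi-isometry, $f(m)$ lies within a uniformly bounded distance of $m_\Gamma$. Now sample any path $\sigma$ from $x$ to $y$ finely enough that consecutive sample points are close; their $f$-images form a chain from $f(x)$ to $f(y)$ in $\Gamma$ with uniformly bounded jumps, so to get past the cut point $m_\Gamma$ the chain must come within a bounded distance of $m_\Gamma$, hence of $f(m)$. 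Pulling this back through $\bar f$ shows $\sigma$ passes within a distance $\Delta$ of $m$, where $\Delta$ depends only on $\lambda$ and $\epsilon$.

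For (2) $\Rightarrow$ (1), I would construct a simplicial tree from a basepoint. Fix $p\in Y$ and, for each integer level $n$, partition the sphere $\{y : d(p,y)=n\}$ into clusters by declaring two points equivalent when they can be joined by a path staying at distance at least $n-\Delta$ from $p$. Take the clusters at all levels as the vertex set of a graph $T$, joining a level-$(n+1)$ cluster to the level-$n$ cluster that contains the feet of the geodesics running down to $p$. The heart of the argument is to verify that $T$ is a tree and is quasi-isometric to $Y$: one shows each level-$(n+1)$ cluster has a unique parent, since nearby points at one level project to nearby points one level down, so the graph branches only as one moves away from $p$; and one shows the assignment $y\mapsto(\text{its cluster at level }d(p,y))$ is a quasi-isometry. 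The bottleneck property enters decisively in the lower distance bound: if the clusters of $y$ and $y'$ first merge only at level $k$, then any path from $y$ to $y'$ is repeatedly forced near the midpoints of its subsegments, dragging it back down toward level $k$, so that $d(y,y')\gtrsim (d(p,y)-k)+(d(p,y')-k)$, matching the distance in $T$ up to multiplicative and additive constants.

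I expect the main obstacle to be this last direction, and within it the non-recurrence of divergence: proving that once two geodesics emanating from $p$ lie in different clusters they stay uniformly far apart forever, so that the clustering genuinely defines a tree rather than a graph with cycles. Getting the quantitative bookkeeping right — choosing the separation threshold as a function of $\Delta$ so that the cluster relation is simultaneously coarse enough to be stable under projection toward $p$ and fine enough that the bottleneck forces the claimed lower bound — is where the care is needed; the upper distance bound and the verification that the map is coarsely surjective are comparatively formal.
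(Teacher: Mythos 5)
You should know at the outset that the paper does not prove this statement at all: it is quoted from \cite{Mann}, and the paper only later recalls Manning's construction (inductively chosen vertex points of $Y$ joined by geodesics across path components of complements of neighborhoods), which is different from your cluster construction. So your proposal stands or falls on its own, and it has a genuine gap in the direction (2) $\Rightarrow$ (1): you have misidentified where the bottleneck hypothesis must act. Write $n=d(p,y)$, $n'=d(p,y')$, let $k$ be the level at which the clusters of $y$ and $y'$ first merge, and set $D=(n-k)+(n'-k)$, which is the distance in your tree $T$. The inequality you attribute to the bottleneck property, $d(y,y')\gtrsim D$, is in fact automatic from your definition of the equivalence relation: if the feet of $y$ and $y'$ at level $\ell$, with $k<\ell\leq\min(n,n')$, lie in different clusters, then the concatenation (foot of $y$ up to $y$, then a geodesic $[y,y']$, then $y'$ down to its foot) must dip to distance less than $\ell-\Delta$ from $p$, and the dip can only occur on $[y,y']$ because the two geodesic legs stay at distance at least $\ell$; taking $\ell=k+1$ produces $z\in[y,y']$ with $d(p,z)<k+1-\Delta$, whence $d(y,y')=d(y,z)+d(z,y')\geq D-2$. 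Likewise the tree-ness you single out as the main obstacle is automatic: the same concatenation trick shows feet of equivalent points are equivalent one level down, so the parent map is well defined, and a graded graph with unique parents has no cycles. None of this uses the bottleneck property, and indeed these facts hold in every geodesic space. What your proposal never proves is the reverse inequality $d(y,y')\lesssim D$ --- that points whose clusters merge high up are genuinely close in $Y$ --- and this is the only step where the hypothesis is indispensable: in $Y=\R^2$ every sphere about $p$ is a single cluster (travel along the circle), your $T$ is a ray, and the cluster map sends antipodal points at distance $2n$ to the same vertex, so it is not a quasi-isometry. The repair does exist: since the clusters merge at level $k$, there is a path from $y$ to $y'$ staying at distance at least $k-\Delta$ from $p$; it passes within $\Delta$ of the bottleneck midpoint $m$ of $y,y'$, so $d(p,m)>k-2\Delta$. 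The path $[y,p]\cup[p,y']$ must also pass within $\Delta$ of $m$, say at $u\in[p,y]$; then $\tfrac12 d(y,y')=d(y,m)\leq d(y,u)+\Delta=n-d(p,u)+\Delta<n-k+4\Delta$, giving $d(y,y')\leq 2D+8\Delta$. Without supplying this (or an equivalent) argument, your construction yields only a coarsely Lipschitz map onto a tree, which is no proof of (1).

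There is also a smaller error in (1) $\Rightarrow$ (2): a $(\lambda,\epsilon)$-quasi-isometry distorts distances multiplicatively, so $f(m)$ need not lie within uniformly bounded distance of the midpoint $m_\Gamma$ of $[f(x),f(y)]$; the discrepancy can be of order $(\lambda-\lambda^{-1})\,d(x,y)$, which is unbounded. What is true, by stability of quasi-geodesics in trees, is that $f(m)$ lies within uniformly bounded distance of \emph{some} point $q\in[f(x),f(y)]$, and since every interior point of a tree geodesic is a cut point, your chain argument goes through with $q$ in place of $m_\Gamma$. So that direction is repairable with a standard tool; the missing inequality above is the substantive gap.
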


Pick a base point $p$ in $X_i$. There exists some $r>0$ such that $G \cdot
B(p,r)$ covers $X_i$.

\begin{lemma}\label{large ball separates space}
There exists $R >0$ such that for any $x,y$ in the same connected
component of $X_i \setminus B(p,R)$, the geodesic segment $[x,y]$
does not intersect $B(p,r)$.
\end{lemma}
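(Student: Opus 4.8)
The goal is Lemma~\ref{large ball separates space}: there is some $R>0$ so that any two points $x,y$ in the same component of $X_i \setminus B(p,R)$ can be joined by a geodesic avoiding $B(p,r)$.

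The plan is to argue by contradiction, exploiting the fact that $X_i$ is Gromov hyperbolic (established just above via the Flat Plane Theorem) together with properness and cocompactness. First I would suppose the conclusion fails, so for every $R>0$ there exist points $x_R, y_R$ lying in a common connected component of $X_i \setminus B(p,R)$ whose connecting geodesic segment $[x_R,y_R]$ nonetheless meets $B(p,r)$. Taking $R \to \infty$ produces geodesic segments passing through the fixed ball $B(p,r)$ but whose endpoints recede to infinity while remaining joinable outside an arbitrarily large ball around $p$. The intuition is that in a $\delta$-hyperbolic space a geodesic whose endpoints are far from $p$ but which passes near $p$ forces any two points on that geodesic, one on each side of $p$, to be separated by $p$ in a strong sense; yet being in the same component of the complement of a huge ball says they are \emph{not} separated by $p$, and this is the contradiction I want to extract.

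The key steps, in order, would be as follows. First, fix $r$ as given (so $G\cdot B(p,r)$ covers $X_i$). Second, record the thin-triangles constant $\delta$ for $X_i$ and the properness/cocompactness of the $G$-action on $X_i$. Third, from the assumed failure, pass to the endpoints at infinity: since $X_i$ is proper, after taking a subsequence $x_R \to \xi$ and $y_R \to \eta$ in $\bar X_i$ with $\xi,\eta \in \pa X_i$, and the geodesics $[x_R,y_R]$ converge (on compact sets) to a bi-infinite geodesic line $\ell$ from $\xi$ to $\eta$ passing through $B(p,r)$. Fourth, I would use hyperbolicity to show that a point on $\ell$ on the $\xi$-side far from $p$ and a point on the $\eta$-side far from $p$ cannot be connected by any path that avoids a sufficiently large ball about $p$: in a hyperbolic space a geodesic line through $B(p,r)$ realizes, up to the constant $\delta$, the nearest-point projection to $p$, so $p$ is a coarse cut point between the two ends of $\ell$. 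Concretely, for $R$ large any path joining a point of $\ell$ beyond distance $R$ on one side to a point beyond distance $R$ on the other side must come within $r$ of $p$, because the Gromov product $(\,x \mid y\,)_p$ stays bounded (by roughly $r + \delta$) while both $d(p,x)$ and $d(p,y)$ grow. Fifth, translate this coarse-separation statement back to the level of the approximating segments $[x_R,y_R]$ for finite but large $R$: this contradicts that $x_R$ and $y_R$ lie in the \emph{same} component of $X_i\setminus B(p,R)$, since being in the same component would furnish a connecting path staying outside $B(p,R) \supset B(p,r)$ once $R \geq r$. Hence no such $R$-family can exist and the desired $R$ is produced.

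The main obstacle will be Step four and Step five: making the ``coarse cut point'' argument quantitatively correct and tying the choice of $R$ to the hyperbolicity constant $\delta$ and to $r$ so that the separation holds for \emph{all} paths, not merely geodesics. The cleanest route is to phrase it via the Gromov product: if $u,v$ lie on opposite ends of a geodesic line through $B(p,r)$, then $(u\mid v)_p \le r + O(\delta)$, and a standard hyperbolic-space estimate shows any path from $u$ to $v$ that stays outside $B(p,\rho)$ forces $(u\mid v)_p \gtrsim \rho$; choosing $R$ so that $R > r + O(\delta)$ yields the contradiction. One must be slightly careful that cocompactness (rather than geodesic completeness of $X_i$, which need not hold) is what guarantees the visual boundary and the limiting line $\ell$ actually pass through $B(p,r)$ with the correct uniformity, but since the $G$-action on $X_i$ is cocompact and $X_i$ is proper, the limiting and compactness arguments go through without difficulty.
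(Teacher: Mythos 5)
Your setup (argue by contradiction, take $R_n \to \infty$, extract boundary limits $x_{R}\to\xi$, $y_{R}\to\eta$, and a limiting geodesic line $\ell$ from $\xi$ to $\eta$ meeting $B(p,r)$) coincides with the paper's proof. But your Steps four and five rest on a claim that is false: it is not true in a $\delta$-hyperbolic space that a path from $u$ to $v$ avoiding $B(p,\rho)$ forces $(u \mid v)_p \gtrsim \rho$. The Gromov product controls, up to $\delta$, the distance from $p$ to the \emph{geodesic} $[u,v]$; for arbitrary paths the correct statement is exponential divergence, i.e.\ a path from $u$ to $v$ avoiding a large ball centered on $[u,v]$ must be exponentially \emph{long} — but it can perfectly well exist. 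Since ``$x_R$ and $y_R$ lie in the same component of $X_i \setminus B(p,R)$'' gives you only the existence of \emph{some} connecting path, with no length bound whatsoever, no contradiction follows from your estimate. The cleanest way to see the strategy cannot be repaired within your hypotheses: $\H^2$ (hyperbolic plane) is proper, cocompact, and Gromov hyperbolic, yet the complement of every ball is connected, so two antipodal points at distance $2R$ from $p$ lie in the same component of $\H^2 \setminus B(p,R)$ while their geodesic passes through $p$; the lemma is simply false for $\H^2$. Hence any proof using only hyperbolicity, properness and cocompactness must fail.

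The missing ingredient is the hypothesis you never invoke: $\pa X_i$ is a Cantor set, hence totally disconnected. This is what the paper uses at exactly the point where your argument breaks down. Since $\xi \neq \eta$ are distinct points of a totally disconnected boundary, they lie in different connected components of $\pa X_i$, and for a hyperbolic space different components of the boundary correspond to different \emph{ends} of the space (\cite{BH}, Exercise III.H.3.8). Different ends are, by definition of ends, separated by $B(p,R_n)$ once $R_n$ is large enough, so $x_n$ and $y_n$ eventually lie in different components of $X_i \setminus B(p,R_n)$ — contradicting the assumption that they lie in the same one. In other words, the separation you want is topological (an ends statement driven by total disconnectedness of the boundary), not metric (a Gromov-product statement driven by $\delta$-hyperbolicity), and the two are genuinely different: hyperbolicity is used in the paper only to make the correspondence between boundary components and ends available.
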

\begin{proof}
Suppose on the contrary that for $R_n$ increasing to infinity, we
can find $x_n, y_n$ in the same connected component of $X_i
\setminus B(p,R_n)$ and $[x_n,y_n]$ intersects $B(p,r)$.  Since $\pa
X_i$ is compact in the cone topology, passing to a subsequence we
have $x_n \to \bar x$, $y_n \to \bar y$ for some $\bar x, \bar y\in
\pa X_i$. By \cite{BH} Lemma II.9.22, there is a geodesic line from
$\bar x$ to $\bar y$ intersecting $B(p,r)$. In particular, $\bar x
\neq \bar y$.

Since different connected components in the boundary of a hyperbolic
space correspond to different ends of the space (\cite{BH} Exercise
III.H.3.8), and $\pa X_i$ is a Cantor set, so $\bar x$ and $\bar y$
are in different ends of $X_i$, which are separated by $B(p,R_n)$
for $R_n$ large enough. But then $x_n$, $y_n$ will be in different
connected components of $X_i \setminus B(p,R_n)$, contradicting the
assumption. Hence the result.
\end{proof}

\begin{lemma}
$X_i$ has the Bottleneck Property.
\end{lemma}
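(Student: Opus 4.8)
We must show that $X_i$ has the Bottleneck Property of Theorem \ref{Manning}: there is a uniform $\Delta>0$ so that for all $x,y\in X_i$, the midpoint $m$ of the geodesic $[x,y]$ is within $\Delta$ of any path from $x$ to $y$. The plan is to produce such a $\Delta$ from the two scales already in hand, namely $r$ (the radius for which $G\cdot B(p,r)=X_i$) and the separation radius $R$ supplied by Lemma \ref{large ball separates space}.

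\emph{The approach.} The key idea is that the translates $G\cdot B(p,R)$ form a collection of ``bottleneck balls'' whose removal disconnects the two ends to which $x$ and $y$ run, because $\pa X_i$ is totally disconnected. Concretely, given $x,y\in X_i$ with midpoint $m$, I would apply Lemma \ref{large ball separates space} to a translate $g^{-1}$ that carries $m$ into $B(p,r)$: such a $g$ exists since $G\cdot B(p,r)=X_i$. Then $B(gp,R)$ is a ball containing a neighborhood of $m$, and by the translated form of the lemma, any two points lying in the same connected component of $X_i\setminus B(gp,R)$ are joined by a geodesic missing $B(gp,r)\ni m$. First I would argue that $x$ and $y$ must lie in \emph{different} connected components of $X_i\setminus B(gp,R)$: if they were in the same component, the geodesic $[x,y]$ — which passes through $m$, hence through $B(gp,r)$ — would contradict Lemma \ref{large ball separates space}. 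Therefore any path $\sigma$ from $x$ to $y$, being connected, must cross from one component to the other and so must meet $B(gp,R)$.

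\emph{Completing the bound.} Once $\sigma$ is forced to enter $B(gp,R)$, it meets a point $z$ with $\dist(z,gp)\le R$. Since $m\in B(gp,r)$, the triangle inequality gives $\dist(z,m)\le \dist(z,gp)+\dist(gp,m)< R+r$. Thus every path from $x$ to $y$ comes within $R+r$ of $m$, and $\Delta := R+r$ works uniformly. The midpoint hypothesis of the Bottleneck Property is automatic since $X_i$ is a (uniquely) geodesic CAT(0) space, so $[x,y]$ has a genuine midpoint $m$ with $\dist(x,m)=\dist(y,m)=\tfrac12\dist(x,y)$.

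\emph{The main obstacle.} The delicate point is the claim that $x$ and $y$ lie in different components of $X_i\setminus B(gp,R)$, which is where the geometry of $\pa X_i$ being a Cantor set is genuinely used — the same ends/components correspondence from Lemma \ref{large ball separates space} (via \cite{BH} Exercise III.H.3.8) underlies it. One must be careful that $R$ from Lemma \ref{large ball separates space} is chosen relative to the \emph{fixed} base point $p$ and then transported by the isometry $g$; since $G$ acts by isometries, $B(gp,R)=g\,B(p,R)$ and the separation property is preserved under $g$, so no uniformity is lost. A secondary point to verify is that $\dist(x,y)$ is large enough that $m$ genuinely lies outside $B(gp,R)$ is \emph{not} needed — we only need $m\in B(gp,r)$, which we arranged directly — and that when $x$ or $y$ themselves lie inside $B(gp,R)$ the conclusion is trivial since then $x$ or $y$ is already within $R+r$ of $m$.
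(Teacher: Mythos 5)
Your proof is correct and takes essentially the same route as the paper: use cocompactness to translate so the midpoint $m$ lies in an $r$-ball (you move the ball to $B(gp,r)$, the paper moves the points so $m\in B(p,r)$), invoke Lemma \ref{large ball separates space} to conclude that $x$ and $y$ lie in different components of the complement of the $R$-ball, and deduce that every path from $x$ to $y$ meets that ball and hence comes within $R+r$ of $m$. The only cosmetic difference is that the paper reduces to the case $d(x,y)>2(R+r)$ to ensure $x,y$ lie outside $B(p,R)$, while you dispose of the degenerate case (when $x$ or $y$ lies inside the $R$-ball) directly.
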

\begin{proof}
For any $x,y\in X_i$, we may translate by some $g\in G$ so that the
midpoint $m$ of $[x,y]$ is in $B(p,r)$. We may assume that  $d(x,y)
> 2(R+r)$, then $x,y \in X_i \setminus B(p,R)$. By Lemma \ref{large
ball separates space}, $x,y$ are in different connected components
of $X_i \setminus B(p,R)$, hence any path connecting $x$ to $y$ must
intersect $B(p,R)$, so some point on this path is at a distance at
most $R+r$ from $m$. Thus the Bottleneck Property is satisfied.
\end{proof}

\begin{lemma}
$X_i$ is quasi-isometric to a bounded valence tree with no
terminal vertex.
\end{lemma}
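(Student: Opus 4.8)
The plan is to manufacture the required tree from the simplicial tree $\Gamma$ furnished by Theorem \ref{Manning}, first deleting its terminal vertices and only afterwards bounding its valence. Throughout I fix a quasi-isometry $f\colon X_i \to \Gamma$ with a coarse inverse $\overline f$ and constants $(L,A)$, and I use the standing facts that $X_i$ is a proper geodesic space on which $G$ acts coboundedly, that $X_i$ is Gromov hyperbolic (already observed via the Flat Plane Theorem), and that a simplicial tree is $0$-hyperbolic. Consequently any bi-infinite $(L,A)$-quasi-geodesic in $X_i$ or in $\Gamma$ lies within a uniform Hausdorff distance of a genuine geodesic line, by the stability of quasi-geodesics in hyperbolic spaces (\cite{BH}, Theorem III.H.1.7), and $f$ induces an injective map on ideal boundaries.

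First I would remove the terminal vertices. Let $T \subset \Gamma$ be the convex hull of $\pa\Gamma$, equivalently the union of all bi-infinite geodesic lines of $\Gamma$; since $\pa X_i$ is an (infinite) Cantor set, $\pa\Gamma$ is infinite and $T$ is a nonempty subtree. Any vertex $v$ of $T$ lies on some bi-infinite line contained in $T$, so $v$ meets at least two edges of $T$; hence $T$ has no valence-$1$ vertex, i.e.\ no terminal vertex. The crucial point is that $T$ is quasi-dense in $\Gamma$, and this is exactly where geodesic completeness of $X_i$ is used: every point $x \in X_i$ lies on a bi-infinite geodesic line $\gamma$, whose two ideal endpoints are distinct points of $\pa X_i$. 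The image $f\circ\gamma$ is then a bi-infinite quasi-geodesic whose two endpoints at infinity are the distinct images of $\gamma(\pm\infty)$ under the induced boundary map, so $f\circ\gamma$ fellow-travels a bi-infinite geodesic line of $\Gamma$, which lies in $T$. Therefore $f(x)$, and hence every point of $\Gamma$, is within a uniform distance of $T$, so the inclusion $T \hookrightarrow \Gamma$ is a quasi-isometry and $T$ is quasi-isometric to $X_i$.

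It remains to bound the valence of $T$. Because $X_i$ is proper and $G$ acts coboundedly, $X_i$ has bounded geometry: for each $\rho$ there is a bound $N(\rho)$ on the cardinality of any $1$-separated subset of a ball of radius $\rho$. Next I note that every component of $T \setminus \{v\}$ is unbounded, for if such a component $U$ were finite then a line through any vertex of $U$ would have to exit $U$ through $v$ in both directions, which is impossible for a geodesic line; this is the convex-hull property. Thus at a vertex $v$ of valence $k$, each of the $k$ directions contains a point $w_j$ with $d_\Gamma(v,w_j)=D$, where $D$ is chosen large enough that $\tfrac{1}{L}(2D)-A\ge 1$. Points in distinct directions satisfy $d_\Gamma(w_i,w_j)=2D$, so the pulled-back points $\overline f(w_1),\dots,\overline f(w_k)$ are pairwise at distance at least $1$ while all lying in the ball of radius $LD+A$ about $\overline f(v)$. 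Bounded geometry then forces $k \le N(LD+A)$, a bound independent of $v$. Hence $T$ is a bounded valence tree with no terminal vertex that is quasi-isometric to $X_i$.

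The main obstacle is the quasi-density of $T$ in $\Gamma$, that is, ruling out long ``dangling'' branches: this is the step that converts geodesic completeness of $X_i$ into the statement that the whole coarse geometry of $\Gamma$ is carried by its bi-infinite lines, and it depends both on the induced boundary map being injective (so that each lifted line has two distinct ends) and on quasi-geodesic stability in the $0$-hyperbolic tree. Once that is in hand, the absence of terminal vertices and the uniform valence bound follow routinely from the convex-hull description of $T$ and from coboundedness.
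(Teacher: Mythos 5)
Your proof is correct, but it takes a genuinely different route from the paper's. The paper does not use Theorem \ref{Manning} as a black box: it re-runs Manning's explicit construction on $X_i$ and verifies that the tree it produces already has both properties --- geodesic completeness ensures that every terminal vertex of the partial tree at stage $k-1$ acquires a new neighbour at stage $k$ (no path component of the complement ``disappears''), while cocompactness bounds the degree of a vertex $w$ by the number of path components of $X_i\setminus B(w,R'+6\Delta)$, which after translating $w$ into the fixed ball $B(p,r)$ is at most the number of path components of $X_i\setminus B(p,r+R'+6\Delta)$. You instead post-process an arbitrary tree $\Gamma$ quasi-isometric to $X_i$: prune it to the convex hull $T$ of $\pa\Gamma$, prove quasi-density of $T$ by pushing forward geodesic lines of $X_i$ (this is exactly where geodesic completeness enters) and applying Morse stability, then bound the valence by pulling $1$-separated points back through the quasi-isometry and using bounded geometry of $X_i$ (properness plus the cobounded projected $G$-action). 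Your version is more modular and actually proves a general upgrading statement, independent of the internals of Manning's construction; the paper's version is more elementary, avoiding boundary maps and stability of bi-infinite quasi-geodesics, and keeps explicit constants. Two routine points you should still record: \cite{BH} Theorem III.H.1.7 concerns quasi-geodesic segments and $\Gamma$ need not be locally finite (hence not proper), so the bi-infinite statement you invoke should be derived by joining the two distinct ends of $f\circ\gamma$ by a geodesic line of $\Gamma$ --- which exists in any simplicial tree --- and combining the finite Morse lemma with the $1$-Lipschitz projection onto that line; and $T$, defined as a union of lines, should be checked to be connected (hence a convex subtree), so that its induced path metric is the restriction of $d_\Gamma$ and quasi-density really does give a quasi-isometry.
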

\begin{proof}
First we describe briefly Manning's construction in his proof of
Theorem \ref{Manning}. Let $R'=20\Delta$. Start with a single point
$\star$ in $Y$. Call the vertex set containing this point $V_0$, and
let  $\Gamma_0$ be a tree with only one vertex and no edge, and
$\beta_0:\Gamma_0 \to Y$ be the map sending the vertex to $\star$.
Then for each $k \geq 1$, Let $N_{k-1}$ be the open $R$-neighborhood
of $V_{k-1}$. Let $\mathcal C_k$ be the set consists of path
components of $Y\setminus N_{k-1}$. For each $C\in \mathcal C_k$
pick some point $v$ at $C \intersect \bar N_k$. There is a unique
path component in $\mathcal C_{k-1}$ containing $C$, corresponding
to a terminal vertex $w \in V_{k-1}$. Connect $v$ to $w$ by a
geodesic segment. Let $V_k$ be the union of $V_{k-1}$ and the set of
new points from each of the path components in $\mathcal C_k$. Add
new vertices and edges to the tree $\Gamma_{k-1}$ accordingly to get
the tree $\Gamma_k$. Extend $\beta_{k-1}$ to $\beta_k$ by mapping
new vertices of $\Gamma_k$ to corresponding new vertices in $V_k$,
and new edges to corresponding geodesic segments. The tree $\Gamma=
\union_{k\geq 0}\Gamma_k$, and $\beta: \Gamma \to Y$ is defined to
be $\beta_k$ on $\Gamma_k$.

Apply the construction above to $X_i$. Since $X_i$ is geodesically
complete, each terminal vertex in $V_{k-1}$ will be connected by at
least one vertex in $V_k \setminus V_{k-1}$, and similarly so for
terminal vertices of $\Gamma_{k-1}$. So the tree $\Gamma$ has no
terminal vertex.

Manning proved that the length of each geodesic segment added in the
construction is bounded above by $R'+6\Delta$. Consider $w \in
V_{k-1}$ with corresponding path component $C_w \in \mathcal
C_{k-1}$. Every path component $C \in \mathcal C_k$ such that $C
\subset C_w$ gives a new segment joining $w$. Together with geodesic
completeness of $X_i$, this implies that such $C$ will contain at
least one path component of $X_i \setminus B(w,R'+6\Delta)$, and
every path component of $X_i \setminus B(w,R'+6\Delta)$ is contained
in at most one such $C$. (Geodesic completeness is used to ensure that no such $C$ will disappear when passing to  $X_i \setminus B(w,R'+6\Delta)$ .) Thus the number of new vertices in $V_k$
joining $w$ is bounded by the number of path components of $X_i
\setminus B(w,R'+6\Delta)$. Call the vertex in $\Gamma$
corresponding to $w$ as $p_w$. Since no more new segments will join
$w$ in subsequent steps, the degree of $p_w$ in $\Gamma$ equals one
plus the number of new vertices in $V_k$ joining $w$. Translate
$X_i$ by some $g$ so that $g \cdot w\in B(p,r)$. The number of path
components in $X_i \setminus B(w, R'+6\Delta)$ equals that in $X_i
\setminus B(g\cdot w, R'+6\Delta)$, which is at most the number of
path components in $X_i \setminus B(p, r+ R'+6\Delta)$, as $ B(g
\cdot w, R'+6\Delta) \subset B(p, r+ R'+6\Delta)$. Hence we obtain a
universal bound of the degree of $p_w$ in $\Gamma$, which means
$\Gamma$ has bounded valence.
\end{proof}

A tree of bounded valence with no terminal vertex is quasi-isometric
to the trivalent tree. Such tree is called a bounded valence \textit{bushy} tree.
Therefore we have shown the following:

\begin{theorem}
If $X_i$ is a proper cocompact and geodesically complete CAT(0)
space whose boundary $\pa X_i$ is homeomorphic to a Cantor set, then
$X_i$ is quasi-isometric to a bounded valence bushy tree.
\end{theorem}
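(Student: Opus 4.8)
The plan is to assemble the three facts already established for $X_i$ in this section into the single packaged statement. First I would record that $X_i$ is Gromov hyperbolic: since $\pa X_i$ is a Cantor set it is totally disconnected and so contains no embedded copy of $S^1$, while an isometrically embedded $\E^2 \subset X_i$ would force such an embedded $S^1$ in the boundary. By the Flat Plane Theorem a proper cocompact CAT(0) space is hyperbolic precisely when it contains no isometrically embedded $\E^2$, so $X_i$ is hyperbolic. The only delicate point here is that the relevant cocompactness is that of the projected $G$-action on $X_i$, whose image in $\Isom(X_i)$ need not be discrete but is genuinely cocompact.

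Next I would invoke Manning's Theorem \ref{Manning}: the Bottleneck Property lemma proved above supplies a uniform $\Delta$ witnessing that property, so $X_i$ is quasi-isometric to the simplicial tree $\Gamma$ coming from Manning's explicit construction, via the map $\beta \colon \Gamma \to X_i$. The two extra features needed for the conclusion are read off from the construction itself. Geodesic completeness guarantees that every terminal vertex $w \in V_{k-1}$ acquires at least one descendant in $V_k$, so $\Gamma$ has no terminal vertex. Cocompactness of the $G$-action yields the uniform valence bound: the degree of the vertex $p_w$ is controlled by the number of path components of $X_i \setminus B(w, R'+6\Delta)$, and translating $w$ by some $g \in G$ into the fixed ball $B(p,r)$ bounds this by the number of path components of $X_i \setminus B(p, r + R' + 6\Delta)$, a quantity independent of $w$.

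Finally I would appeal to the standard fact that any simplicial tree of bounded valence with no terminal vertex is quasi-isometric to the regular trivalent tree, so that composing with $\beta$ exhibits $X_i$ as quasi-isometric to a bounded valence bushy tree, which is the claim. The main obstacle is the uniform valence bound, and this is exactly where cocompactness is indispensable: without a single ball $B(p,r)$ meeting every $G$-orbit one could not transfer the local branching count at an arbitrary vertex $w$ to a fixed, $w$-independent bound, and the valence of $\Gamma$ could in principle be unbounded. Geodesic completeness plays the complementary role of preventing path components from collapsing when one passes to the smaller ball $B(w, R' + 6\Delta)$, so that the bushiness (absence of terminal vertices) persists.
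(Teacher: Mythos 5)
Your proposal is correct and follows essentially the same route as the paper: Gromov hyperbolicity of $X_i$ via the Flat Plane Theorem, Manning's theorem applied through the Bottleneck Property, geodesic completeness ruling out terminal vertices, cocompactness (translating $w$ into the fixed ball $B(p,r)$) giving the uniform valence bound, and the standard fact that a bounded valence tree without terminal vertices is quasi-isometric to the trivalent tree. The paper's argument is exactly this assembly of its preceding lemmas, so there is nothing to add.
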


Now each of $X_1$, $X_2$ is quasi-isometric to a bushy tree, thus
$X$ is quasi-isometric to the product of two bounded valence bushy trees, and so is
$G$. Therefore we can apply a theorem by Ahlin (\cite{Ahlin} Theorem
1) on quasi-isometric rigidity of lattices in products of trees to
show that a finite index subgroup of $G$ is a lattice in $\Isom(T_1 \times T_2)$ where $T_i$ is a bounded valence bushy tree quasi-isometric to $X_i$, $i=1,2$. Notice that $\Isom(T_1) \times \Isom(T_2)$ is isomorphic to a subgroup of $\Isom(T_1 \times T_2)$ of index 1 or 2 (which can be proved similarly as Lemma \ref{subgroup stabilizing C_1, C_2}), we finally proved the last statement of Theorem \ref{mainthm2}.

\bibliographystyle{alpha}
\bibliography{reference}
\end{document}